\documentclass[11pt]{amsart}

\usepackage[utf8]{inputenc}
\usepackage{amsmath}
\usepackage{amsthm}

\usepackage{lmodern}
\usepackage{anyfontsize}

\usepackage{biblatex}
\addbibresource{references.bib}

\usepackage{amsfonts}
\usepackage{amssymb}

\usepackage{url}
\usepackage{doi}
\usepackage{color}

\usepackage{tikz}
\usepackage{todonotes}

\usepackage{bm}
\usepackage{bbold}
\usepackage{thm-restate} 

\usepackage{subfig}
\usepackage{graphicx}
\usepackage{float}
\usepackage{esdiff}
\usepackage{comment}
\usepackage{mathtools} 
\usepackage{xcolor} 
\usepackage{bbm} 
\usepackage{dsfont} 
\usepackage{diagbox}
\usepackage{pdfsync}
\setlength{\parindent}{.4 in}
\setlength{\textwidth}{6.7 in}
\setlength{\topmargin} {-.3 in}
\setlength{\evensidemargin}{-6mm}
\setlength{\oddsidemargin}{-6mm}
\setlength{\footskip}{.3 in}
\setlength{\headheight}{.3 in}
\setlength{\textheight}{24.4 cm}
\setlength{\parskip}{.09 in}

\newtheorem{thm}{Theorem}
\newtheorem{prop}{Proposition}

\newtheorem{lem}{Lemma}
\newtheorem{conj}{Conjecture}

\newtheorem{fact}{Fact}

\theoremstyle{definition}

\newtheorem{claim}[thm]{Claim}

\newcommand{\Z}{\mathbb{Z}}

\newcommand{\R}{\mathbb{R}}

\newcommand{\PP}{\mathbb{P}}
\newcommand{\E}{\mathbb{E}}

\begin{document}

\title{The Gaussian measure of a convex body controls its maximal covering radius}
\author[M. Szusterman]{Maud Szusterman}
\address[Maud Szusterman]{Department of Mathematics\\ Tel Aviv University}
\email{maud.szusterman@imj-prg.fr}

\thanks{The author was supported by the Israel Science Foundation grant No. 1750/20. In addition, part of this work was conducted while the author was visiting the Hausdorff Research Institute for Mathematics, supported by the Deutsche Forschungsgemeinschaft under the Excellence Strategy (project number EXC-2047/1 – 390685813).}

\begin{abstract}
The well-studied vector balancing constant $\beta(U, V)$ of a pair of convex bodies $(U,V)$, is lower bounded by a lattice counterpart, $\alpha(U,V)$. In \cite{BS97}, Banaszczyk and Szarek proved that $\alpha(B_2^n, V)\leq c$ when $V$ has Gaussian measure at least $\frac{1}{2}$, and conjectured that, for centrally symmetric $V$, $\beta(B_2^n, V)$ is always bounded by a function of the Gaussian measure of $V$, independent of $n$. We resolve this conjecture in the affirmative. Moreover, we show that the analogous result holds for $\alpha(B_2^n, V)$ even without the central symmetry assumption.
\end{abstract}

\maketitle

\section{Introduction}
As part of a study of two-player combinatorial games, Spencer \cite{Spbalgames63} introduced the notion of vector balancing. Given a convex set $V\subset \R^n$ such that $0\in \text{int}(V)$, the gauge function $\|\cdot\|_V$ is defined by $\|x\|_V = \min\{r > 0: x \in rV\}$. For an arbitrary subset $U\subset \R^n$ and $V$ as above, the ``vector balancing constant'' of $U$ with respect to $V$ is defined by:
$$\text{vb}(U,V):=\beta'(U,V):= \sup_{t \in \mathbb N} \hspace{2mm} \max_{u_1, \ldots, u_t \in U} \hspace{2mm} \min_{\epsilon\in \{\pm 1\}^t} \left|\left| \sum_{i=1}^t \epsilon_i u_i\right|\right|_V$$

(The notation $\text{vb}(U,V)$ is prominent in the computer science literature around this topic, while $\beta'$ is used in many papers from the previous century.) In words, $\beta'(U, V)$ is the infimal $r$ such that any finite set of vectors $u_1, \ldots, u_t \in U$ may be ``balanced'', by choosing appropriate signs $\epsilon_i$ for each $u_i$, so that the signed sum lies in $rV$. 

A closely related quantity, also measuring balancing for a pair $(U,V)$, is 
$$\beta(U,V)=\max_{u_1, \ldots, u_n \in U} \min_{\epsilon\in \{\pm 1\}^n} \left|\left| \sum_{i=1}^n \epsilon_i u_i\right|\right|_V.$$ 

The difference between $\beta'$ and $\beta$ is that in the former, one needs to balance arbitrarily large finite subsets of $U$, while in the latter one needs only to balance $n$-tuples of vectors from $U$, where $n$ is the dimension of the ambient space. Assuming that $0 \in U$, it is known that $\beta(U,V)\leq \beta'(U,V) \leq 2\beta(U,V)$ (see for instance \cite{LSV}).

The problem of computing, or estimating $\beta(U, V)$ for specific choices of $U, V$, under general conditions, as well as the problem of efficiently finding a sequence of signs $(\epsilon_i)_{i \le n}$ such that $\|\sum \epsilon_i u_i\|_V$ approximates $\beta(\{u_i\},V)$, has attracted much attention over the years. To state some known results, we introduce a few notations and terminology: let us denote $B_p^n$ the unit $\ell_p$-ball in $\mathbb R^n$, i.e., $B_p^n = \{x \in \mathbb R^n: |x_1|^p + \cdots + |x_n|^p \le 1\}$; in particular, $B_2^n$ is the unit Euclidean ball and $B_\infty^n = [-1, 1]^n$. We denote by $\gamma_n$ the standard gaussian measure on $\mathbb R^n$, i.e., the probability measure with density $d\gamma_n = \frac{1}{(2\pi)^{n/2}} e^{-|x|^2/2}\,dx$.

It is not hard to see that $\beta(B_2^n, B_2^n) = \sqrt n$, or that $\beta(B_{\infty}^n, B_2^n)=\Theta(n)$ (the inclusion $B_{\infty}^n\subset \sqrt{n} B_2^n$ gives an upper bound, while Hadamard matrices give the lower bound). The ``six standard deviations'' theorem of Spencer \cite{S85}, which was independently proven by Gluskin \cite{Gl89}, is that $\beta(B_{\infty}^n, B_{\infty}^n)\leq c\sqrt{n}$ with $c>0$ a universal constant. It is also known that $\beta(B_1^n, B_{\infty}^n)\leq 2$ for all $n\geq 1$ (see Beck and Fiala, \cite{BF81}). The well-known Komlós conjecture asks whether $\beta(B_2^n, B_{\infty}^n)= O(1)$, i.e. whether $\beta(B_2^n, B_{\infty}^n)$ is upper bounded by an absolute constant independent of $n$. 
The best upper bound currently known, due to Banaszczyk \cite{Ban98}, gives
$\beta(B_2^n, B_{\infty}^n) \leq 5\sqrt{2\log n}$ (for all $n\geq 2$). He derives this inequality as a corollary of a more general result, namely that $\beta(B_2^n, V)\leq 5$ for any closed convex set $V\subset \R^n$ such that $\gamma_n(V)\geq \frac{1}{2}$.

The vector balancing problem is closely related to the intensively-studied subject of convex bodies and lattices. Recall that an $n$-lattice is a discrete subgroup of $\mathbb R^n$ which spans $\mathbb R^n$ as a vector space, or, equivalently, a set of the form $L = A\mathbb Z^n$ for $A \in GL_n(\mathbb R)$. We write $\mathcal{K}^n$ for the set of closed convex sets with nonempty interior in $\R^n$; $\mathcal{K}_0^n$ the set of $V\in \mathcal{K}^n$ such that $0\in \text{int}(V)$; and $\mathcal{C}^n$ the set of $V \in \mathcal K^n$ with $V=-V$.

Let $V\in \mathcal{K}^n$, and let $L$ be an $n$-lattice. The covering radius of $V$ with respect to $L$, denoted $\mu(L, V)$ is the least $c>0$ such that $L+cV=\R^n$, where $A + B = \{a + b: a \in A, b \in B\}$ is the Minkowski sum. One has $\mu(L, V) < \infty$ because $V$ has nonempty interior. For instance, one may check that $\mu(\mathbb Z^n, B_p^n)=\frac{n^{1/p}}{2}$, for any $p\in [1, +\infty]$. Note also that $\mu(L, V) = \mu(L, V + x)$ for any $x \in \mathbb R^n$. 
Also of interest in lattice theory are the successive minima, defined as follows: for $U\in \mathcal{K}_0^n$, $\lambda_k(L,U)$ is the least $c>0$ such that $L\cap cU$ contains at least $k$ linearly independent vectors. Here we are primarily interested in the last successive minimum, $\lambda_n$.

In \cite{BS97}, Banaszczyk and Szarek introduced the following quantity: for $U \in \mathcal K_0^n, V \in \mathcal K^n$, set
$$\alpha(U,V):=\sup_{\text{$L$ an $n$-lattice}} \frac{\mu(L,V)}{\lambda_n(L,U)}.$$
For any $U,V$, both $\mu(., V)$ and $\lambda_n(., U)$ are $1$-homogeneous in $L$, so one may assume that $\lambda_n(L, U) = 1$, yielding the following equivalent definition:
$$\alpha(U,V)=\sup \{\mu(L,V) : \text{$L$ an $n$-lattice},\, \text{Span}(L\cap U)=\R^n\}.$$
Denote by $|K|$ the volume of a convex set $K\subset \R^n$. Banaszczyk \cite{Ban93} showed that if $U\in \mathcal{C}^n$ and $V\in \mathcal{K}^n$, one has $\alpha(U,V)\geq \frac{\sqrt{n}}{e\sqrt{2\pi}}\left(\frac{|U|}{|V|}\right)^{1/n}$. (One can check that $\alpha(B_2^n, B_2^n)=\frac{\sqrt{n}}{2}$, hence 
this estimate is the best possible up to some absolute constant.)

Though the definitions of $\alpha$ and $\beta$ seem quite different, these quantities are closely related: it turns out that for any $U \in \mathcal K_0^n$, $V \in \mathcal K^n$ one has $\alpha(U,V)\leq \beta(U,V)$ (see \cite{LSV} or Fact \ref{alphabetaineq} in the appendix). On the other hand, for any $n\geq 2$, there exists $V \in \mathcal K_0^n$ for which the ratio $\frac{\alpha(B_2^n,V)}{\beta(B_2^n, V)}$ is arbitrarily small (take for $V$ a cone with gaussian barycenter far from the origin; see \S \ref{subsec:sym_nec} for more details).

Slightly before the appearance of \cite{Ban98}, Banaszczyk and Szarek \cite{BS97} proved that, if $V\in \mathcal{K}^n$ has measure $\gamma_n(V)\geq \frac{1}{2}$, then  $\alpha(B_2^n, V)\leq c$, where $c>0$ is such that $\gamma_1([-c,c])=\frac{1}{2}$. This is sharp in every dimension: the bound is attained by symmetric slabs of gaussian measure $\frac{1}{2}$. 

To the best of our knowledge, it remains unknown whether or not, for $n\geq 2$, there exists $c_n>0$ such that $\alpha(B_2^n, V) \geq c_n \beta(B_2^n, V)$, for any $V\in \mathcal{C}_n$. If true, and if $c_n$ could be chosen independently of $n$, then the Komlós conjecture would be equivalent to the (a priori) weaker conjecture that $\alpha(B_2^n, B_{\infty}^n)=O_n(1)$.

Denote $\Psi(x)=\gamma_1([-x,x])$. The main result obtained by Banaszczyk and Szarek in \cite{BS97} is:
\begin{thm}
\label{BSresult}
Let $V$ be a convex body in $\R^n$, such that $\gamma_n(V)\geq \frac{1}{2}$. Then $\alpha(B_2^n, V)\leq (2\psi^{-1}(1/2))^{-1}$.
\end{thm}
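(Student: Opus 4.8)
The plan is to recast the bound as a lattice-covering statement and then run Banaszczyk's method of Gaussian measures on lattices. Set $c_0$ so that $\Psi(c_0)=\gamma_1([-c_0,c_0])=\tfrac12$ (thus $c_0=\psi^{-1}(1/2)$, and the asserted bound is $(2c_0)^{-1}$). Since $\mu(\cdot,V)$ and $\lambda_n(\cdot,B_2^n)$ are $1$-homogeneous in the lattice, the inequality $\alpha(B_2^n,V)\le(2c_0)^{-1}$ is equivalent to: every $n$-lattice $M$ with $\lambda_n(M,B_2^n)\le 2c_0$ satisfies $\mu(M,V)\le 1$, i.e.\ $M+V=\R^n$. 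The first move is to pick linearly independent $u_1,\dots,u_n\in M$ with $|u_i|\le 2c_0$ and pass to the sublattice $M_0=\sum_{i\le n}\Z u_i\subseteq M$; since $M_0+V\subseteq M+V$, it suffices to prove $M_0+V=\R^n$. So one may assume $M=\sum_{i\le n}\Z u_i$ with $|u_i|\le 2c_0$, and the goal is that every coset $x+M$ meets $V$.

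I would then reformulate this via discrete Gaussian sums. For $S\subseteq\R^n$ set $\rho(S)=\sum_{y\in S}e^{-|y|^2/2}$, the scale chosen so that the underlying density is the one defining $\gamma_n$. Then $(x+M)\cap V\neq\emptyset$ is equivalent to the strict inequality $\rho((x+M)\setminus V)<\rho(x+M)$, because $\rho(x+M)=\rho((x+M)\cap V)+\rho((x+M)\setminus V)$. So everything reduces to showing that, uniformly in $x$, the $\rho$-mass of the part of the coset $x+M$ lying outside $V$ is a \emph{strict} fraction of the whole; convexity of $V$ and the denseness of $M$ (guaranteed by $\lambda_n(M,B_2^n)\le 2c_0$, e.g.\ via a transference bound on $\lambda_1(M^*,B_2^n)$) are precisely the inputs that should make this quantitative.

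For the upper bound on $\rho((x+M)\setminus V)$ I would use that $V^c$ is a union of open half-spaces $\{z:\langle z,\theta\rangle>h_V(\theta)\}$, $\theta\in\Sph$, where $h_V$ is the support function. Projecting a coset onto a line $\R\theta$ turns the contribution of a single half-space into a one-dimensional discrete Gaussian tail along the progression $\langle x+M,\theta\rangle\subseteq\R$, whose spacing is controlled by the denseness of $M$. The input from $V$ is that $\gamma_n(V)\ge\tfrac12$ forces, for every $\theta$, the slab $\{z:-h_V(-\theta)\le\langle z,\theta\rangle\le h_V(\theta)\}$ containing $V$ to have $\gamma_1$-measure $\ge\tfrac12$; equivalently $h_V(\theta)+h_V(-\theta)\ge 2c_0$, or $\gamma_n(\{\langle z,\theta\rangle>h_V(\theta)\}\cup\{\langle z,\theta\rangle<-h_V(-\theta)\})\le\tfrac12$. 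Feeding this pair of one-dimensional facts into the estimate should keep the total mass outside $V$ strictly below $\rho(x+M)$, with the extremal case being a symmetric slab $V$ of Gaussian measure exactly $\tfrac12$ together with a lattice such as $2c_0\Z e_1+\varepsilon\Z e_2+\cdots+\varepsilon\Z e_n$ (small $\varepsilon$), which realizes $\alpha(B_2^n,V)=(2\psi^{-1}(1/2))^{-1}$ and shows the constant cannot be improved.

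I expect two obstacles. The minor one: $V^c$ is a union of \emph{infinitely many} half-spaces, so there is no naive union bound; the remedy is to reduce by compactness to finitely many ``worst'' directions, or --- cleaner --- to bound $\rho((x+M)\setminus V)$ directly by a continuous Gaussian measure of a mild dilate of $V^c$, using that $\rho$ of a coset of $M$ is at most $\rho(M)$ together with a Banaszczyk-type tail bound on $\rho(M^*\setminus tB_2^n)$ uniform in $n$. The serious obstacle is the quantitative estimate itself: controlling $\rho((x+M)\setminus V)/\rho(x+M)$ for an arbitrary convex body $V$ (the slab being only the model case) and an arbitrary dense lattice $M$, in a way that recovers the sharp constant $(2\psi^{-1}(1/2))^{-1}$ rather than a lossy multiple of it; this is where the interplay between the lattice-density hypothesis and the Gaussian-measure hypothesis has to be exploited in full, direction by direction. (An inductive alternative --- project orthogonally along $u_n$ and apply the inductive hypothesis to the convex ``long-chord'' subregion of $V$ over which an integer shift along $u_n$ is guaranteed to land in $V$ --- stalls because that subregion need not have Gaussian measure $\ge\tfrac12$, so the hypothesis does not directly transfer; repairing it appears to require the same discrete-Gaussian input in disguise.)
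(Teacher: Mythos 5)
Your proposal is a plan rather than a proof: the central quantitative step --- showing $\rho\big((x+M)\setminus V\big)<\rho(x+M)$ uniformly in $x$ for an arbitrary convex $V$ with $\gamma_n(V)\ge\frac12$ and an arbitrary lattice $M$ with $\lambda_n(M,B_2^n)\le 2\psi^{-1}(1/2)$ --- is exactly the content of the theorem, and you explicitly leave it open. The preparatory reductions (passing to the sublattice generated by $u_1,\dots,u_n$; the equivalence of $(x+M)\cap V\neq\emptyset$ with a strict inequality of discrete Gaussian masses; the fact that every supporting slab of $V$ has width at least $2\psi^{-1}(1/2)$) are correct but only restate the problem. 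Moreover, the route you sketch is unlikely to close the gap as stated: decomposing $V^{c}$ into half-spaces retains only one-dimensional marginal information about $V$, and the discrete-Gaussian/transference machinery you invoke (bounding the mass of a coset outside a dilate of $V$ via tail bounds on $\rho(M\setminus tB_2^n)$ or on the dual lattice) is precisely the mechanism that, in Banaszczyk's work, yields non-sharp absolute constants such as the $5$ in Theorem~\ref{Banresult}; you give no mechanism for recovering the sharp constant $(2\psi^{-1}(1/2))^{-1}$, and none is known along these lines.

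For comparison, this statement is not proved in the present paper at all: it is quoted from \cite{BS97} and used as the $p\ge\frac12$ case in the proof of Theorem~\ref{thm:main}. The argument of \cite{BS97} (which \S 3 generalizes to $p<\frac12$) is the inductive route you discard, and the idea you are missing is that one never tries to transfer the hypothesis $\gamma_n(V)\ge\frac12$ to sections. Instead one argues by contradiction: if some coset $a+L$ misses $V$, slice $V$ by the hyperplanes $z e_n+e_n^{\perp}$ with $e_n^{\perp}=\mathrm{Span}(u_1,\dots,u_{n-1})$, and apply the inductive hypothesis to the $(n-1)$-lattice $L\cap e_n^{\perp}$ to conclude that the sections $K_z$ satisfy $\gamma_{n-1}(K_z)<\frac12$ for all $z$ in an arithmetic progression of gap $\lambda\le 2\psi^{-1}(1/2)$; a two-dimensional Ehrhard symmetrization (or, in \cite{BS97}, a direct convexity argument) then produces a planar convex set of Gaussian measure $\ge\frac12$ meeting a line through the origin in a segment of length $<2\psi^{-1}(1/2)$, contradicting the planar chord lemma (Lemma~1 of \cite{BS97}, the $p=\frac12$ instance of Proposition~\ref{prop:planar}). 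Thus the analytic core is a sharp two-dimensional lemma, not a discrete-Gaussian mass estimate; your proposal would need either that planar input or a genuinely new way to make the coset-mass comparison sharp.
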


In that paper, the two authors conjectured the following:
\begin{conj}
\label{BSconj}
There exists a non-increasing function $f:(0,1)\to (0,+\infty)$, such that for any symmetric convex body $V\subset \R^n$, $\beta(B_2,V)\leq f(\gamma_n(V))$.
\end{conj}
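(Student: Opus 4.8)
The plan is to deduce the conjecture from Banaszczyk's theorem ($\beta(B_2^n,W)\le 5$ whenever $\gamma_n(W)\ge 1/2$) by dilating $V$ until its Gaussian measure reaches $1/2$, while controlling the dilation factor solely in terms of $\gamma_n(V)$. The tool for the latter is the $S$-inequality of Lata\l{}a and Oleszkiewicz: among all symmetric convex sets of a prescribed Gaussian measure, the symmetric strip is the one whose dilates have the smallest Gaussian measure; equivalently, $\gamma_n(tK)\ge \Psi\big(t\,\Psi^{-1}(\gamma_n(K))\big)$ for every symmetric convex $K\subset\R^n$ and every $t\ge 1$.

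So let $V\subset\R^n$ be a symmetric convex body, write $p=\gamma_n(V)$ and $a=\Psi^{-1}(p)$, and set $T=\max\{1,\,\Psi^{-1}(1/2)/a\}$. First I would apply the $S$-inequality with $K=V$, $t=T$: this gives $\gamma_n(TV)\ge \Psi(Ta)\ge \Psi\big(\Psi^{-1}(1/2)\big)=1/2$, where the inequality $Ta\ge \Psi^{-1}(1/2)$ holds by the choice of $T$ and $T\ge 1$ makes the $S$-inequality applicable. Next, since $TV$ is itself a closed convex set of Gaussian measure at least $1/2$, Banaszczyk's theorem gives $\beta(B_2^n,TV)\le 5$. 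Finally, from $\|x\|_{TV}=T^{-1}\|x\|_V$ one reads off that $\beta(B_2^n,\cdot)$ is $1$-homogeneous in its second argument, so $\beta(B_2^n,V)=T\cdot\beta(B_2^n,TV)\le 5T$. Unwinding $T$, this is
$$\beta(B_2^n,V)\ \le\ 5\max\left\{1,\ \frac{\Psi^{-1}(1/2)}{\Psi^{-1}(\gamma_n(V))}\right\},$$
so $f(p):=5\max\{1,\Psi^{-1}(1/2)/\Psi^{-1}(p)\}$ works: it is finite and positive on $(0,1)$, non-increasing since $\Psi^{-1}$ is increasing, independent of $n$, and it reproduces Banaszczyk's constant $5$ once $p\ge 1/2$.

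The argument is short because both deep ingredients --- Banaszczyk's theorem and the $S$-inequality --- are used as black boxes, and the remaining steps (the choice of $T$, the homogeneity of $\beta$, the bookkeeping) are routine. The one point I would check carefully is that the $S$-inequality is available in the generality needed here, namely for arbitrary symmetric convex bodies and all $t\ge 1$ with \emph{no} lower bound imposed on $\gamma_n(V)$, since it is applied precisely when $\gamma_n(V)$ is small; I expect this to be the only genuine subtlety. Note also that the central symmetry of $V$ enters solely through the $S$-inequality, which has no analogue for general convex bodies, so this route cannot yield the $\alpha$-statement without central symmetry announced in the abstract --- that case will require a different argument.
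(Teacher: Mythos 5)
Your proof is correct and follows exactly the same route as the paper's Theorem~\ref{thm:BSconjholds}: apply the $S$-inequality to dilate $V$ to Gaussian measure $1/2$, invoke Banaszczyk's Theorem~\ref{Banresult}, and use homogeneity of $\beta$ to unwind the dilation. The resulting function $f$ is the same (the paper writes it in two cases rather than with a $\max$), and the caveat you flag --- that the $S$-inequality holds for all symmetric convex sets and all $t\ge 1$ with no lower bound on the measure --- is indeed satisfied by the statement of \cite{LO}.
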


It is natural to also consider a weaker version of this conjecture, replacing $\beta$ by $\alpha$:
\begin{conj}
\label{weakBSconj}
There exists a non-increasing function $f:(0,1)\to (0,+\infty)$, such that for any symmetric convex body $V\subset \R^n$, $\alpha(B_2, V)\leq f(\gamma_n(V))$.
\end{conj}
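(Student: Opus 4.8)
The plan is to deduce Conjecture~\ref{weakBSconj} from the Banaszczyk--Szarek theorem (Theorem~\ref{BSresult}) by dilating $V$ until its Gaussian measure reaches $1/2$, the required dilation factor being controlled in a dimension-free way by the $S$-inequality of Lata\l{}a and Oleszkiewicz.

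The first step is the elementary homogeneity of the covering radius. For any convex body $V$, any $t>0$ and any $n$-lattice $L$, the gauge satisfies $\|x\|_{tV}=\frac{1}{t}\|x\|_V$, hence $\mu(L,tV)=\frac{1}{t}\mu(L,V)$; taking the supremum over all $L$ with $\lambda_n(L,B_2^n)\le1$ gives $\alpha(B_2^n,V)=t\,\alpha(B_2^n,tV)$. Thus it suffices to find, for each $\delta\in(0,1)$, a number $t(\delta)>0$ depending on $\delta$ alone such that $\gamma_n\bigl(t(\delta)V\bigr)\ge 1/2$ whenever $V\subset\R^n$ is a symmetric convex body with $\gamma_n(V)=\delta$. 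Indeed, applying Theorem~\ref{BSresult} to $t(\delta)V$ then yields
$$\alpha(B_2^n,V)=t(\delta)\,\alpha\bigl(B_2^n,t(\delta)V\bigr)\le \frac{t(\delta)}{2\,\Psi^{-1}(1/2)},$$
so one may take $f(\delta)=\frac{t(\delta)}{2\Psi^{-1}(1/2)}$.

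To produce $t(\delta)$: if $\delta\ge 1/2$, put $t(\delta)=1$. If $\delta<1/2$, let $P_\delta=\{x\in\R^n:|x_1|\le\Psi^{-1}(\delta)\}$ be the centred slab of Gaussian measure $\delta$. The $S$-inequality asserts that if a symmetric convex body and a slab have the same Gaussian measure, then the slab has the smaller Gaussian measure under every dilation by a factor $\ge1$; hence for every $t\ge1$,
$$\gamma_n(tV)\ \ge\ \gamma_n(tP_\delta)\ =\ \Psi\bigl(t\,\Psi^{-1}(\delta)\bigr).$$
Choosing $t(\delta)=\Psi^{-1}(1/2)/\Psi^{-1}(\delta)$, which is $\ge1$ because $\delta<1/2$, makes the right-hand side equal to $1/2$, so $\gamma_n(t(\delta)V)\ge 1/2$ as required. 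Unwinding, the witness obtained is $f(\delta)=\bigl(2\,\Psi^{-1}(\min(\delta,1/2))\bigr)^{-1}$: it is finite, positive and non-increasing on $(0,1)$, and on $(0,1/2]$ it equals $\bigl(2\,\Psi^{-1}(\delta)\bigr)^{-1}$, the value actually attained at $V=P_\delta$ (one checks $\mu(L,P_\delta)\le(2\Psi^{-1}(\delta))^{-1}$ for all $L$ with $\lambda_n(L,B_2^n)\le1$, with equality for a suitable $L$), so the reduction is lossless there.

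The one genuinely nontrivial input is the dimension-free control of the dilation factor: by itself, $\gamma_n(V)=\delta$ gives no bound, uniform in $n$, on how much $V$ must be inflated to reach measure $1/2$, and it is precisely the $S$-inequality that supplies such a bound with the sharp, slab-extremal constant. Everything else --- homogeneity of $\mu$ and the invocation of Theorem~\ref{BSresult} --- is routine. Note that this route uses central symmetry essentially, through the $S$-inequality; the strengthening to arbitrary (possibly non-symmetric) convex bodies announced in the abstract does not follow in this way, since no slab-extremal dilation inequality holds without symmetry, and would require a separate argument (for instance, first translating $V$ into a suitable position before dilating).
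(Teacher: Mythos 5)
Your proof is correct and follows the same route the paper itself gives: apply the $S$-inequality to dilate $V$ to measure $1/2$ and then invoke Theorem~\ref{BSresult}, using homogeneity of $\alpha$ to unwind. Your resulting $f(\delta)=(2\Psi^{-1}(\min(\delta,1/2)))^{-1}$ is exactly the sharper $f_\alpha$ of equation~\eqref{eq:f_alpha}, which the paper records immediately after deducing Conjecture~\ref{weakBSconj} the slightly lossier way via $\alpha\le\beta$ and Theorem~\ref{thm:BSconjholds}.
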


Our first result (Theorem \ref{thm:BSconjholds}) is that Conjectures \ref{BSconj} and \ref{weakBSconj} are true. In fact, they follow easily from the $S$-inequality of Latała and Oleszkiewicz (\cite{LO}), together with the results of Banaszczyk \cite{Ban98} and Banaszczyk-Szarek \cite{BS97}, respectively, which were proven shortly beforehand. We prove this in \S \ref{section:BSconj}.

For each of the above two conjectures, one may ask whether the symmetry assumption (on $V$) is needed. Our main result is that the ``$\alpha$-version'' of the conjecture, namely Conjecture \ref{weakBSconj}, holds in greater generality. More precisely, we prove in \S\S \ref{section:main}-\ref{section:conescomputing} the following generalization of Theorem \ref{BSresult}:

\begin{restatable}{thm}{mainthm}
\label{thm:main}
Let $n\geq 1$, and let $K\subset \R^n$ be a closed convex set with gaussian measure $\gamma_n(K) > 0$. Then
$\alpha(B_2^n ,K)\leq f(\gamma_n(K))$, with 
$$f(p)= \begin{cases}
    (2\Psi^{-1}(p))^{-1} & p\in (0, \frac{1}{2}] \\ 
    (2\Psi^{-1}(\frac{1}{2}))^{-1} & p \in (\frac{1}{2}, 1)
\end{cases}.$$
\end{restatable}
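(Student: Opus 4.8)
The plan is to reduce the general (not necessarily symmetric) case to the symmetric case $\gamma_n(V)\geq \frac12$ handled by Theorem \ref{BSresult}. The case $p\in(\frac12,1)$ is immediate: if $\gamma_n(K)\geq\frac12$ then Theorem \ref{BSresult} gives $\alpha(B_2^n,K)\leq(2\Psi^{-1}(1/2))^{-1}$, and shrinking $K$ only decreases its Gaussian measure, so this also covers the harder range $p\le \frac12$ provided we can always produce, inside $K$, a ``symmetric slab-like'' region of Gaussian measure $\geq\frac12$ — but of course we cannot, since $K$ may be a thin cone. So the core of the argument must directly bound $\mu(L,K)$ whenever $L$ is an $n$-lattice with $\mathrm{Span}(L\cap B_2^n)=\R^n$, i.e.\ $\lambda_n(L,B_2^n)\le 1$, in terms of $\gamma_n(K)$.

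The key step is a \emph{slicing/averaging over the lattice} argument, in the spirit of Banaszczyk's transference technique. Fix such a lattice $L$; we must show $L+f(\gamma_n(K))\,K=\R^n$, equivalently that for every $x\in\R^n$ the translate $x-f(\gamma_n(K))K$ meets $L$. Since $\lambda_n(L,B_2^n)\le 1$, the lattice $L$ contains $n$ linearly independent vectors of Euclidean length $\le 1$; so for any $x$, the ``wrapped Gaussian'' mass that $L$ places on $x+(\text{fundamental cell})$ is comparable to $1$ in a quantitative sense. The plan is to combine this with the hypothesis that the dilate $tK$ (for $t=f(\gamma_n(K))$) has Gaussian measure at least $\frac12$ — this is exactly where the one-dimensional function $\Psi$ enters: if $\gamma_n(K)=p\le\frac12$, then for $t = (2\Psi^{-1}(p))^{-1}$ one checks $\gamma_n(tK)\ge\frac12$. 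Indeed the relation $t\,\Psi^{-1}(p) = \frac12$ is engineered so that the one-dimensional comparison (a thin symmetric slab of measure $p$ dilates by $t$ to a slab of measure $\Psi(t\,\Psi^{-1}(p))=\Psi(1/2)$... ) is tight; more precisely one should invoke a Gaussian dilation inequality showing $\gamma_n(tK)\ge \Psi(t\,\Psi^{-1}(\gamma_n(K)))$ for convex $K$ (the worst case being a slab), which for our choice of $t$ gives $\gamma_n(tK)\ge \Psi(1/2)$. Wait — we need $\ge\frac12$, and $\Psi(1/2)<\frac12$, so the correct statement is that $t$ should be chosen with $t\,\Psi^{-1}(p)$ at least the half-measure radius; I would set $t=(2\Psi^{-1}(p))^{-1}\cdot \Psi^{-1}(1/2)$-type constant and track the exact normalization so that $\gamma_n(tK)\ge\frac12$, then apply Theorem \ref{BSresult} to $tK$ to get $\alpha(B_2^n,tK)\le (2\Psi^{-1}(1/2))^{-1}$, and finally use homogeneity $\alpha(B_2^n,tK)=t^{-1}\alpha(B_2^n,K)$ to conclude $\alpha(B_2^n,K)\le t^{-1}(2\Psi^{-1}(1/2))^{-1}=f(p)$. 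The one-dimensional bookkeeping will have to be done carefully against the stated formula for $f$.

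The main obstacle, and the step requiring genuine work rather than homogeneity bookkeeping, is establishing the \textbf{Gaussian dilation inequality for convex bodies}: that among all closed convex sets of a given Gaussian measure $p$, symmetric slabs $\{|x_1|\le a\}$ have the slowest-growing measure under dilation $K\mapsto tK$ (for $t>1$). Without central symmetry of $K$ this is subtle — a convex set can be a half-space or a cone, whose dilates behave very differently. The resolution, and the reason the theorem is true without symmetry for $\alpha$ (but not for $\beta$, per the cone counterexample in the excerpt), is that $\alpha$ only ``sees'' $K$ through translates: $\mu(L,K)=\mu(L,K+x)$ for all $x$. So I would first translate $K$ to an optimal position — plausibly the position minimizing the in-radius direction, or realizing $\gamma_n$-mass symmetrically in the thinnest direction — reducing to a situation where the relevant one-dimensional marginals of $K$ are symmetric slabs, and then apply the $S$-inequality of Latała–Oleszkiewicz \cite{LO} (which is precisely the dilation inequality for symmetric convex sets: $\gamma_n(tK)\ge\Psi(t\Psi^{-1}(\gamma_n(K)))$ when $K\in\mathcal C^n$) to that reduced slab. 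Making this translation-and-reduction rigorous — showing that the covering radius $\mu(L,K)$ is controlled by $\mu(L,\text{slab})$ after an appropriate choice of center, using convexity of $K$ to contain a translate of a slab of the right measure — is the heart of the proof, carried out in \S\S\ref{section:main}--\ref{section:conescomputing}, and is where the analysis of cones (explaining the section title ``conescomputing'') is needed as the extremal case.
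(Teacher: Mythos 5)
Your proposal has a genuine gap at exactly the step you flag as ``the heart of the proof.'' The dilation inequality $\gamma_n(tK)\ge\Psi(t\,\Psi^{-1}(\gamma_n(K)))$ is simply false for non-symmetric convex $K$: a half-space $\{x_1\le\Phi^{-1}(p)\}$ with $p<\frac12$, or a thin cone as in \S\ref{subsec:sym_nec}, has the property that dilation by $t>1$ \emph{decreases} its Gaussian measure. And the translation-invariance of $\mu(L,\cdot)$, which you correctly identify as the reason $\alpha$ is better behaved than $\beta$ for non-symmetric sets, does not rescue the plan: no translate of a non-symmetric $K$ is centrally symmetric, so the $S$-inequality never becomes applicable, and there is in general \emph{no} position in which $K$ contains a slab of the desired measure (a narrow cone contains no slab at all). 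So ``translate to an optimal position, then apply Lata\l{}a--Oleszkiewicz'' cannot close, and the arithmetic detour in your middle paragraph about rescaling $t$ does not address this.

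The paper's actual argument is structurally different and does not go through any Gaussian dilation step. It proceeds by induction on dimension: supposing $\alpha(B_2^n,K)>f(p)$ produces a lattice $L$ with short independent vectors and a coset $a+L$ disjoint from $K$; slicing perpendicular to the $n$-th short lattice direction $u_n$ and applying the induction hypothesis in $e_n^\perp$ shows that \emph{every} slice $K_z$ with $z\in a_n+\lambda\Z$ (where $\lambda=|\langle u_n,e_n\rangle|\le 2\Psi^{-1}(p)$) has $\gamma_{n-1}(K_z)<p$. One then replaces $K$ by its two-dimensional Ehrhard symmetrization $W$, a closed convex ``horizontal hypograph'' with $\gamma_2(W)=\gamma_n(K)=p$; the slice information forces $W\cap\Delta_p$ to be a segment of length $<\lambda\le 2\Psi^{-1}(p)$. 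The whole weight of the proof then lands on the purely planar Proposition~\ref{prop:planar}: any such $W$ with $\|W\cap\Delta_p\|_2<2\Psi^{-1}(p)$ must have $\gamma_2(W)<p$, a contradiction. That proposition is proved by reducing first to cones (tangent lines at the endpoints of $W\cap\Delta_p$), then to $x$-axis--symmetric cones (Steiner symmetrization of vertical slices), and finally by showing a one-parameter family $\theta\mapsto m(\theta)=\gamma_2(C_\theta)$ has a unique interior critical point which is a minimum, so its maximum $p/2$ is attained only at the degenerate endpoints (slab and half-plane). Cones thus appear not as a heuristic extremal case to keep in mind, but as the concrete objects to which the planar problem is literally reduced. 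Your plan does not contain the induction on dimension, the coset-slicing argument, the Ehrhard symmetrization, or the planar cone lemma, and the piece you do supply in their place is a false inequality.
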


It follows from Theorem \ref{thm:main} and from translation invariance (in $K$) of the covering radius, that
$$\alpha(B_2^n, K) = \inf_{x \in \mathbb R^n} \alpha(B_2^n, K+x) \leq \inf_{x \in \mathbb R^n} f(\gamma_n(K+x)),$$
while an analogous statement for $\beta$ is open (but holds at least in case $K=-K$), see \S \ref{section:open}.

In \S \ref{subsec:smallp}, we explain why our results do not yield any improvement to the estimate $\alpha(B_2^n, B_{\infty}^n)\leq C\sqrt{\log n}$ (implied by Theorem \ref{BSresult}), and why the $f$ solving Conjecture \ref{BSconj} (yielded by Banaszczyk's result and by the $S$-inequality, see \S \ref{section:BSconj}) doesn't improve the upper bound $\beta(B_2^n, B_{\infty}^n)\leq C\sqrt{\log n}$ (\cite{Ban98}).

One might wonder whether a similar statement to Theorem \ref{thm:main} holds for $\beta$ rather than $\alpha$. In \S \ref{subsec:sym_nec}, we provide a counter-example which shows that the symmetry assumption cannot be completely removed in Conjecture \ref{BSconj}. We leave open whether $\mathcal{C}^n$ is the largest subset of $\mathcal{K}^n$ for which a dimensionless upper bound (as in Conjecture \ref{BSconj}) holds (see question (2) in \S \ref{section:open}).

In \S \ref{section:BSconj}, we show how to derive Banaszczyk-Szarek's conjecture from \cite{Ban98} and \cite{LO}; and we argue that the $\beta$-version of Theorem \ref{thm:main} fails. In \S \ref{section:main}, we prove Theorem \ref{thm:main} modulo the technical Proposition \ref{prop:planar}, whose proof is carried out in \S \ref{section:conescomputing}.
Finally, \S \ref{section:remarks} collects some remarks on $\alpha(B_2^n, V)$ and $\beta(B_2^n, V)$ for $\gamma_n(V)$ close to $0$ or to $1$, discusses the possibility of improving the upper estimates on $\alpha(B_2^n, B_\infty^n)$ and $\beta(B_2^n, B_\infty^n)$ using Theorems \ref{thm:main} and \ref{thm:BSconjholds}, and concludes with some open questions. 

\textbf{Acknowledgments. } The author is indebted to Fedor Nazarov for the proof of Lemma \ref{lem:red2}. She would like to thank Sander Gribling for providing her with simulations of the function $m(\theta)$, as well as Matthieu Fradelizi, Jacopo Ulivelli, Beatrice-Helen Vritsiou, Artem Zvavitch, for helpful discussions, and Eli Putterman for thoroughly reading the manuscript and suggesting many improvements.

\section{Proof of Banaszczyk-Szarek Conjecture \ref{BSconj}}
\label{section:BSconj}

Let $\mathcal{K}_0^n$ denote the set of convex bodies in $\R^n$ which contain the origin in their interior. For $V\in \mathcal{K}_0^n$ and $U$ an arbitrary subset of $\R^n$, recall that the vector balancing constant $\beta(U,V)$ is defined as the least $C>0$ such that for any $n$-tuple $(u_1, \ldots, u_n)\in U^n$, one can find signs $\epsilon_i=\pm 1$ such that $\sum_{i=1}^n \epsilon_i u_i \in CV$. Clearly, for any $a > 0$ and any $U, V$, $\beta(aU, V) = a \beta(U, V)$ and $\beta(U, aV) = \frac{1}{a} \beta(U, V)$. 

In \cite{Ban98}, Banaszczyk proved that if $\gamma_n(K)\geq \frac{1}{2}$, and $u_1, \ldots , u_t$ is a sequence of vectors with $||u_i||_2\leq 1$, then there exist signs $\epsilon_i=\pm 1$ such that $\sum_{i=1}^t \epsilon_i u_i \in 5K$. In other words:
\begin{thm}[\cite{Ban98}]
\label{Banresult}
Let $K$ be a convex body such that $\gamma_n(K)\geq \frac{1}{2}$. Then $\beta(B_2^n, K)\leq 5$.
\end{thm}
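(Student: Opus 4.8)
The plan is to prove the (formally stronger) ``online'' statement: for every sequence $v_1, v_2, \dots$ in $B_2^n$ one can pick signs $\epsilon_i$ so that \emph{every} prefix sum $x_k := \sum_{i \le k}\epsilon_i v_i$ lies in $5K$; specializing to $t$-tuples and $k=t$ then yields $\beta(B_2^n, K) \le 5$ (and in fact the same bound for the arbitrary-length quantity $\mathrm{vb}(B_2^n,K)$). Signs are chosen greedily, one at a time, so the real task is to maintain an invariant robust enough to survive one more step. The observation that makes a measure-theoretic invariant useful here is: for a convex body $C$, $\gamma_n(C - x) \ge \tfrac12 \implies x \in C$ --- indeed, if $x\notin C$ a separating hyperplane puts $C-x$ inside an open half-space not containing $0$, which has $\gamma_n$-measure $<\tfrac12$. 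Hence it suffices to run the walk while keeping $x_k$ inside some convex body $W_k\subseteq 5K$ with $\gamma_n(W_k)\ge\tfrac12$; this is consistent at $k=0$ since $\gamma_n(5K)\ge\gamma_n(K)\ge\tfrac12$.

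The heart of the matter is a \emph{one-step update lemma}: given the current position $x_k$, a ``good'' convex body $W_k$ witnessing the invariant, and the next vector $v\in B_2^n$, one can pick a sign $\epsilon$ and a new good body $W_{k+1}\subseteq 5K$ with $x_k+\epsilon v\in W_{k+1}$ and $\gamma_n(W_{k+1})\ge\tfrac12$. The analytic inputs are (i) \emph{log-concavity of Gaussian measure under translation} --- $y\mapsto\gamma_n(C+y)$ is log-concave for convex $C$, being a convolution of the log-concave functions $\mathbf 1_C$ and the Gaussian density (Pr\'ekopa--Leindler); and (ii) the \emph{Gaussian isoperimetric inequality} (or its refinement, Ehrhard's inequality), which quantifies how the measure of a body of measure $\ge\tfrac12$ changes when it is eroded or dilated by a Euclidean ball --- this governs how close to $\partial(5K)$ the walk may safely go, and is where the numerical factor will enter. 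The sign itself is selected by a two-point/averaging argument: among the two candidates, take the one maximizing the relevant measure functional, and use the $v\mapsto -v$ symmetry together with convexity to certify that the maximum is still at least $\tfrac12$.

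The step I expect to be the real obstacle is obtaining a constant \emph{independent of $n$ and of the length of the sequence}. There are two intertwined difficulties. First, the update must not degrade the invariant: a crude one-step lemma leaks a fixed amount of Gaussian mass per step, which is fatal after many steps, so the update has to be engineered so that ``$\gamma_n(\cdot)\ge\tfrac12$'' is an exact fixed point rather than a decaying quantity. Second, the naive invariant ``$\gamma_n(5K-x_k)\ge\tfrac12$'' (i.e.\ taking $W_k$ to be a translate of $5K$) is already \emph{not} self-reproducing: for $K$ a half-slab $\{x_1\ge -s,\ |x_2|\le a\}$ with $\gamma_n(K)$ just above $\tfrac12$, and $x$ near the rounded ``corner'' of the region where the invariant holds, both of $x\pm v$ can violate it --- so $W_k$ must be chosen more cleverly than a shifted copy of $5K$. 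The dilation factor $5$ is exactly what is needed for a sufficiently robust choice of invariant to close up, and pinning it down requires a careful quantitative analysis; as is typical for Gaussian-measure inequalities of this kind, that analysis reduces to a low-dimensional extremal computation, with slabs and half-spaces as the extremal configurations. Everything else --- the reduction to the online statement, the greedy induction, and the convexity inputs --- should be soft.

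For completeness I would record an alternative that sidesteps the recursion: the constructive ``self-balancing'' / Gram--Schmidt walk produces, for unit vectors $v_i$, a random sign vector for which $\sum\epsilon_i v_i$ is $O(1)$-subgaussian as an $\R^n$-valued random variable; combined with the hypothesis $\gamma_n(K)\ge\tfrac12$ this already yields $\beta(B_2^n,K)=O(1)$ by a direct comparison argument, though with an absolute constant worse than $5$, so Banaszczyk's finer recursive argument is still needed for the sharp value stated here.
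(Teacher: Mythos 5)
The paper itself does not prove Theorem~\ref{Banresult}; it cites \cite{Ban98} and uses it as a black box, so there is no in-paper argument to compare against. Your sketch does reconstruct the correct \emph{shape} of Banaszczyk's argument: strengthening to the online prefix-sum statement, choosing signs greedily, carrying a Gaussian-measure invariant ($\gamma_n \ge \tfrac12$) on an evolving sequence of convex bodies, and the correct diagnosis that a merely translated copy of $5K$ does not reproduce that invariant, so the tracked body must change shape as the walk proceeds. The mention of the Gram--Schmidt walk as a constructive alternative with a worse constant is also accurate.

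The gap, however, is essentially total. The ``one-step update lemma'' is neither stated precisely nor proved, and it \emph{is} the theorem. Banaszczyk's argument rests on an explicit recombination operation $K \mapsto K \ast u$ on closed convex sets with the two properties: (a) every $x \in K \ast u$ satisfies $x + u \in K$ or $x - u \in K$; and (b) $\gamma_n(K \ast u) \ge \tfrac12$ whenever $\gamma_n(K) \ge \tfrac12$ and $\|u\|_2 \le \tfrac15$. Given (a)--(b), the greedy induction closes in two lines (iterate $\ast$ backwards from $K$ over the $u_i$; start the walk at $0$, which lies in the innermost body since a closed convex set of Gaussian measure $\ge \tfrac12$ contains the origin). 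Constructing $\ast$ and verifying (b) --- which is where the constant $5$ is actually extracted, and where all the Gaussian analysis lives --- is the entire content of \cite{Ban98}. You name candidate tools (Pr\'ekopa/log-concavity, Gaussian isoperimetry or Ehrhard) and then explicitly defer the quantitative step (``pinning it down requires a careful quantitative analysis,'' ``everything else should be soft''). As written, this is a roadmap rather than a proof: the step you mark as the obstacle is precisely the step that cannot be left to the reader.
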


The $S$-inequality \cite{LO} states that the gaussian measure of a symmetric convex set $V$ increases under dilation by a factor $t > 1$ at least as fast as that of a symmetric slab $S$ which satisfies $\gamma_n(S)=\gamma_n(V)$. More precisely:
\begin{thm}[\cite{LO}]
\label{LOineq}
Let $V\in \mathcal{C}_n$ and let $I = [-c,c] \subset \mathbb R$ be a symmetric interval such that $\gamma_n(V) = \gamma_1(I)$. Then for any $t\geq 1$,$\gamma_n(tV)\geq \gamma_1(tI)$.
\end{thm}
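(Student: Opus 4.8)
The plan is to reduce the $S$-inequality to an infinitesimal inequality at scale $t=1$ and then integrate a first-order differential inequality that it yields. Fix a symmetric convex body $V\subset\R^n$ with $\gamma_n(V)<1$, and let $I=[-c,c]$ satisfy $\gamma_n(V)=\gamma_1(I)$. Since $V$ is symmetric with nonempty interior, $0\in\intr(V)$, so $g(t):=\gamma_n(tV)=t^n\int_V\varphi_n(tx)\,dx$ (where $\varphi_n=d\gamma_n/dx$) is smooth and strictly increasing on $(0,\infty)$, with $g(0^+)=0$ and $g(\infty)=1$. Set $\Psi(u)=\gamma_1([-u,u])$ (so $\Psi'=2\varphi$, with $\varphi$ the standard one-dimensional density) and $m(t):=\Psi^{-1}(g(t))$. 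Then the assertion $\gamma_n(tV)\ge\gamma_1(tI)$ for $t\ge1$ is exactly $m(t)\ge tc$ for $t\ge1$; since $m(1)=c$, it suffices to prove $m'(t)/m(t)\ge 1/t$ for all $t\ge1$ and integrate. Differentiating $g=\Psi\circ m$ gives $g'(t)=2\varphi(m(t))\,m'(t)$.

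The next step is to pin down the correct infinitesimal inequality. Differentiating under the integral (using $\nabla\varphi_n(x)=-x\varphi_n(x)$) gives $\tfrac{d}{dt}\gamma_n(tV)=\tfrac1t\int_{tV}(n-|x|^2)\,d\gamma_n$, and the Gaussian divergence theorem applied to the vector field $x\mapsto x$ rewrites this as $\tfrac1t\int_{\partial(tV)}\langle x,\nu_x\rangle\,d\sigma_\gamma(x)$, where $\sigma_\gamma$ is Gaussian-weighted surface measure. It then suffices to prove, for \emph{every} symmetric convex body $K\subset\R^n$,
\begin{equation*}
\int_K(n-|x|^2)\,d\gamma_n\ \ge\ 2a\,\varphi(a),\qquad a:=\Psi^{-1}(\gamma_n(K)).
\end{equation*}
Indeed, applying this with $K=t_0V$ for an arbitrary $t_0\ge1$, and inserting $\int_{t_0V}(n-|x|^2)\,d\gamma_n=t_0\,g'(t_0)=2t_0\,\varphi(m(t_0))\,m'(t_0)$ and $a=m(t_0)$, turns it into $t_0\,m'(t_0)\ge m(t_0)$, the inequality needed above. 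For the slab $S_a=\{|x_1|\le a\}$ one computes $\int_{S_a}|x|^2\,d\gamma_n=n\Psi(a)-2a\varphi(a)$, so the displayed bound is sharp on slabs; equivalently, it says that \emph{slabs maximise the Gaussian second moment $\int_K|x|^2\,d\gamma_n$ among symmetric convex bodies of a prescribed Gaussian measure}.

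The real difficulty, and the step I expect to be the main obstacle, is this last inequality. After reducing — by continuity and approximation — to a symmetric convex polytope $K=\{x:|\langle x,u_j\rangle|\le1,\ j\le N\}$, the first thing I would try is to push $K$ toward a slab by a symmetrisation that preserves $\gamma_n(K)$ while not decreasing $\int_K|x|^2\,d\gamma_n$; the natural candidate is a Gaussian analogue of Steiner symmetrisation, replacing each one-dimensional fibre of $K$ over $\R^{n-1}$ by the centred interval of equal $\gamma_1$-mass. This does not work: unlike Lebesgue Steiner symmetrisation it need not preserve convexity (already a long, thin, strongly slanted planar parallelogram fails), and a naive dimension reduction by Fubini also fails, since the hyperplane sections of a symmetric body need not be symmetric and the displayed inequality is false without the symmetry hypothesis — a half-space $\{x_1\le t\}$ with $t<0$ has $\int_K(n-|x|^2)\,d\gamma_n=t\varphi(t)<0$, whereas the right-hand side is positive. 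What seems unavoidable is to exploit that the extremiser is \emph{unbounded}: a first-order variational analysis of $\int_{\partial K}\langle x,\nu\rangle\,d\sigma_\gamma$ under the constraint $\gamma_n(K)=p$ forces any bounded critical body to have $|x|^2$ constant on $\partial K$, hence to be a Euclidean ball, which is in fact a \emph{minimiser} of $\int_K|x|^2\,d\gamma_n$; so the supremum is attained ``at infinity'', and one must show that among the degenerate limits of symmetric convex bodies the slab dominates. Carrying this through — controlling the profile $t\mapsto\gamma_n(tK)$ along the relevant deformations and excluding all competing degenerate configurations — is the technical core of the argument, and is essentially the content of Latała and Oleszkiewicz's proof.
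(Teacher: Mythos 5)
Your reduction is correct, and it is in fact the standard equivalent formulation of the $S$-inequality: setting $m(t)=\Psi^{-1}(\gamma_n(tV))$ and differentiating, the claim for all $t\ge 1$ is equivalent to the one-scale inequality $\int_K(n-|x|^2)\,d\gamma_n\ \ge\ 2a\varphi(a)$ with $a=\Psi^{-1}(\gamma_n(K))$, for every symmetric convex $K$ — i.e.\ to the statement that slabs maximize $\int_K|x|^2\,d\gamma_n$ at fixed Gaussian measure. Your computations supporting this (the identity $t\,\tfrac{d}{dt}\gamma_n(tV)=\int_{tV}(n-|x|^2)\,d\gamma_n$, the slab evaluation $\int_{S_a}|x|^2\,d\gamma_n=n\Psi(a)-2a\varphi(a)$, and the integration of $t\,m'(t)\ge m(t)$) are all correct, up to the routine approximation needed to pass from bounded bodies to general closed symmetric convex sets in $\mathcal{C}^n$.

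But this is not a proof of the theorem: the second-moment inequality you isolate is precisely the hard content of the $S$-inequality, and you do not prove it. You correctly observe that Gaussian Steiner symmetrization destroys convexity and that Fubini-type dimension reduction fails without symmetry, and then you defer the ``technical core'' to Latała and Oleszkiewicz — which is to concede the theorem rather than to prove it. The closing variational sketch (bounded critical bodies would have $|x|^2$ constant on the boundary, hence be balls, so the extremum must be attained ``at infinity'') is only heuristic: the class of symmetric convex bodies is neither smooth nor compact, first-order criticality under a convexity constraint does not force boundary regularity, and ruling out all competing degenerate limits is exactly where the work lies. Note also that the paper itself gives no proof of Theorem \ref{LOineq}; it is quoted from \cite{LO} and used as a black box, so a complete argument here would amount to reproving that paper's main result. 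As it stands, your proposal establishes only the (known) equivalence with the infinitesimal form and leaves the essential step unproven.
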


Banaszczyk and Szarek's conjecture follows easily from the above two inequalities. Let $\psi: [0, \infty) \to [0, 1)$ be defined by $\psi(u) = \gamma_1([-u,u])$.
\begin{thm}
\label{thm:BSconjholds}
Let $f_{\beta}: [0, 1] \to \mathbb R^+$ be the function defined by 
\begin{equation}f_{\beta}(p) = \begin{cases} 
5\frac{\psi^{-1}(1/2)}{\psi^{-1}(p)} &  p\in (0,\frac{1}{2}] \\
5 & p\in (1/2, 1). 
\end{cases}
\end{equation}
Then, for any symmetric convex body $K$ in $\R^n$, $\beta(B_2^n, V)\leq f_{\beta}(\gamma_n(V))$.
\end{thm}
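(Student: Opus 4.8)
The plan is to reduce everything to Banaszczyk's Theorem \ref{Banresult} by dilating $V$ until its Gaussian measure reaches $\tfrac12$, and to use the $S$-inequality (Theorem \ref{LOineq}) to quantify how much dilation is needed. First, the case $\gamma_n(V)\in(\tfrac12,1)$ requires no work at all: Theorem \ref{Banresult} applied to $V$ itself gives $\beta(B_2^n,V)\le 5=f_\beta(\gamma_n(V))$. (It is also worth recording up front that $\psi\colon[0,\infty)\to[0,1)$, $\psi(u)=\gamma_1([-u,u])$, is a continuous strictly increasing bijection, so $\psi^{-1}$ is well defined on $[0,1)$ and the formula for $f_\beta$ makes sense.)

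Now suppose $p:=\gamma_n(V)\le\tfrac12$. Let $I=[-c,c]$ be the symmetric interval with $\gamma_1(I)=p$, so $c=\psi^{-1}(p)$, and note $c\le\psi^{-1}(\tfrac12)$ because $\psi$ is increasing. I would set $t:=\psi^{-1}(\tfrac12)/\psi^{-1}(p)\ge 1$. Since $t\ge 1$, the $S$-inequality applies to the symmetric convex body $V$ and gives $\gamma_n(tV)\ge\gamma_1(tI)=\psi(tc)=\psi\!\left(\psi^{-1}(\tfrac12)\right)=\tfrac12$. Thus $tV$ is a convex body of Gaussian measure at least $\tfrac12$, and Theorem \ref{Banresult} yields $\beta(B_2^n,tV)\le 5$.

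Finally I would invoke the scaling relation $\beta(U,aV)=\tfrac1a\,\beta(U,V)$ noted above: $\beta(B_2^n,V)=t\,\beta(B_2^n,tV)\le 5t=5\,\dfrac{\psi^{-1}(1/2)}{\psi^{-1}(p)}=f_\beta(p)$, which is exactly the claimed bound. There is essentially no obstacle here — the only points to check are that $t\ge1$ (so the $S$-inequality is applicable) and that $\psi^{-1}$ is well defined, both of which are immediate. One may additionally remark that the resulting $f_\beta$ is non-increasing on $(0,1)$ and continuous at $\tfrac12$ (both branches equal $5$ there), since $p\mapsto 1/\psi^{-1}(p)$ is decreasing; this confirms that $f_\beta$ has the shape required by Conjecture \ref{BSconj}.
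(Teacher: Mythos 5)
Your proof is correct and is essentially identical to the paper's argument: both handle $\gamma_n(V)>\tfrac12$ directly via Banaszczyk's theorem, and for $p\le\tfrac12$ dilate $V$ by $t=\psi^{-1}(1/2)/\psi^{-1}(p)$ using the $S$-inequality and then apply Banaszczyk's theorem together with the homogeneity $\beta(B_2^n,V)=t\,\beta(B_2^n,tV)$. The extra remarks (well-definedness of $\psi^{-1}$, monotonicity of $f_\beta$) are fine but not needed.
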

\begin{proof}
It follows from Theorem \ref{Banresult} that if $\gamma_n(V) = p\geq \frac{1}{2}$, then $\beta(B_2^n, V)\leq 5=f(p)$. Hence we may assume that $\gamma_n(V)=p<1/2$. Since $V$ is symmetric, the $S$-inequality (Theorem \ref{LOineq}) yields $\gamma_n(tV)\geq \frac{1}{2}$ for $t=\frac{\psi^{-1}(1/2)}{\psi^{-1}(p)} > 1$. Therefore, applying Theorem \ref{Banresult} to $tK$ and using the homogeneity of $\beta$, one gets:
$$\beta(B_2^n, V)=t\beta(B_2^n, tV)\leq 5t=f(p).$$
\end{proof}
Since $\alpha(U,V)\leq \beta(U,V)$ for any pair $(U,V)\in (\mathcal{C}^n)^2$, Theorem \ref{thm:BSconjholds} implies in particular that Conjecture \ref{weakBSconj} holds, with the same $f$. However, directly using Theorem \ref{BSresult}, and the $S$-inequality, yields a sharper upper bound, namely one gets that, for any $n\geq 1$, and any symmetric convex body $K\subset \R^n$,  $\alpha(B_2^n ,K)\leq f_\alpha(\gamma_n(K))$, with 
\begin{equation}\label{eq:f_alpha} f_\alpha(p)= 
\begin{cases} (2\Psi^{-1}(p))^{-1} & p\leq \frac{1}{2} 
\\ f_\alpha(p) = (2\Psi^{-1}(\frac{1}{2}))^{-1} & \frac{1}{2}< p < 1.
\end{cases}
\end{equation}
This latter upper bound on $\alpha$ is smaller than the one implied by Theorem \ref{thm:BSconjholds}, by a factor $10 \Psi^{-1}(1/2)$. 

Moreover, the bound $\alpha(B_2^n,V)\leq f_{\alpha}(\gamma_n(V))$ is sharp for $p\in(0,\frac{1}{2}]$, as $\alpha(B_2^n,S)=(2\Psi^{-1}(p))^{-1}$, if $S$ is a symmetric slab of gaussian measure $p$. Indeed, for any $c>0$, if $S=\R^{n-1} \times [-c,c]$, then $\mu(\Z^n,S)=\frac{1}{2c}$, while $\lambda_n(\Z^n, B_2^n)=1$, hence $\alpha(B_2^n, S)\geq \mu(\Z^n,S)= \frac{1}{2c}$. On the other hand, it is easily checked that if $u_1, \ldots, u_n \in B_2^n$ are linearly independent, and $L= \Z u_1 + \cdots + \Z u_n$, then $L+\frac{1}{2c}S=\R^n$. Therefore $\alpha(B_2^n, S)=\frac{1}{2c}$. Since $\gamma_n(S)=\Psi(c)$, this shows sharpness in (\ref{eq:f_alpha}), for $p\leq \frac{1}{2}$.

\subsection{Necessity of the symmetry assumption}\label{subsec:sym_nec}

In view of Theorem \ref{thm:main}, one may wonder if the symmetry assumption is necessary, or if one could weaken the assumption on $K$, and only require $K$ to be a convex body containing the origin in its interior. To see why the symmetry assumption is necessary, consider a cone $$C=\text{Cone}(0, e_n+t B_2^{n-1})=\cup_{x\in B_2^{n-1}} \R_{\geq 0} (e_n+tx)$$ 
Intuitively, one has $\alpha(B_2^n, C)=0$, while $\beta(B_2^n, C)= +\infty$. In fact $\beta(B_2^n, C)$ is not well-defined, so we approximate $C$ with a convex body $V$ containing $0$ in its interior, and with $\beta(B_2^n, V)$ arbitrarily large, which still satisfies $\alpha(B_2^n, V)\leq 1$ and $\gamma_n(V)\geq p$ with $p>0$ fixed. Let us give more detail on how we choose $V$.

Fix $p<\frac{1}{2}$. Let $B=B_2^n \cap e_n^{\perp}$, and, for $d, t>0$, let $C_{d,t}=\text{Conv}(0, d (e_n+ t B))$; that is, $C_{d, t}$ is the cone on an $(n - 1)$-dimensional ball of radius $dt$, with height $d$ and apex at the origin. Denote by $C_{\infty,t}$, or simply by $C_t$, the infinite cone $C_t=\text{Cone}(0, e_n + tB)=\cup_{x\in B} \R_{\geq 0} (e_1+tx)$. Let $t_p>0$ be such that $\gamma_n(C_{t_p})=p$. For any $t>t_p$, there exists $d<\infty$ such that $\gamma_n(C_{d,t})> p$ and $\alpha(B_2^n, C_{d,t})\leq 1$. Indeed, when $d$ is large enough (depending on $t$), $C_{d,t}$ contains a translate of $n B_2^n$, and hence contains a translate of any parallelogram $P=[0,u_1]+ \cdots +[0, u_n]$ with $\|u_i\|_2\leq 1$, ensuring that $\alpha(B_2^n, C_{d,t})\leq 1$.

Let $t>t_p$ and let $d$ be sufficiently large. Since $\text{dist}(e_i, C_{d,t}) > 0$ for $i \neq n$, one may fix $u_2, \ldots, u_{n} \in e_1^{\perp}$ linearly independent, with $\|u_i\|_2 \leq 1$, in a way such that $C_{d,t}\cap P^0=\emptyset$, where $P^0$ is the discrete set: $P^0=\{\epsilon_1 e_1 +\sum_{i=2}^{n} \epsilon_i u_i, \epsilon\in \{\pm 1\}^n\}.$
Denote $\delta:=\text{dist}(P^0, C_{d,t})>0$.
For $s>0$, denote $C'_s=C_{d,t}-se_n$, which contains the origin in its interior. Then, for small $s>0$ (say $s<\frac{\delta}{2}$), one has $\text{dist}(P^0, C'_s)\geq \delta - s >0$, so that $P^0\cap C'_s=\emptyset$. Moreover one can check that $\beta(\{e_1, u_2, \ldots, u_n\}, C'_s)\geq \frac{\delta}{s}$, implying $\beta(B_2^n, C'_s)\geq \frac{\delta}{s}$. Thus, if $s$ is small enough (with respect to $\gamma_n(C_{d,t})-p>0$), one has
$$\gamma_n(C'_s)\geq \gamma_n(C_{d,t})- \gamma_n(C_{d,t} \cap \{x_n \leq s\})\geq \gamma_n(C_{d,t})- \frac{s}{2} >p,$$
 while $\beta(B_2^n, C'_s)$ can be arbitrarily large (by choosing $s>0$ very small), this tells us that no inequality of type $\beta(B_2^n, V)\leq f(\gamma_n(V))$ could hold for arbitrary $V\in \mathcal{K}_0^n$. In other words, it is possible that Theorem \ref{thm:BSconjholds} holds in a more general setting than the class of symmetric convex bodies $\mathcal{C}_n$ (see question (3) in section \ref{section:open}), but it cannot hold for all of $\mathcal{K}_0^n$.

\section{Proof of Theorem \ref{thm:main}}
\label{section:main}

In this section, we state and prove the main result of this article, which gives a positive answer to Conjecture \ref{weakBSconj} in the non-symmetric setting: 

\mainthm*

The proof uses the following technical proposition about the Gaussian measure in $\R^2$.

\begin{restatable}{prop}{planar}
\label{prop:planar}
Fix $p\leq \frac{1}{2}$. Let $W=\{(x,y)\in \R^2 : x\leq t_y\}$ for some concave function $(t_y)$. Assume that $\|W\cap \Delta_p\|_2<2\Psi^{-1}(p)$, where $\|I\|$ denotes the Euclidean length of an interval $I$. Then $\gamma_2(W)<p$.
\end{restatable}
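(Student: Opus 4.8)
Since $t$ is concave, $W$ is a convex subset of $\R^2$ (it is the subgraph of $t$, after swapping the roles of the two coordinates), so the standard convexity toolkit applies. The plan has two stages: first, using supporting lines of $W$, reduce the statement to the case where $W$ is a planar \emph{cone} (an intersection of two half-planes of the form $\{x\le\ell(y)\}$ with $\ell$ affine, together with its degenerate limits: a half-plane, and a slab); second, settle the cone case by a one-parameter estimate on the Gaussian measure of a cone --- the computation carried out in \S\ref{section:conescomputing}.

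\emph{Reduction to cones.} Write $W\cap\Delta_p=[a,b]$, a sub-interval of the line carrying $\Delta_p$. I would choose supporting lines $\ell_a,\ell_b$ of $W$, transverse to $\Delta_p$, at boundary points of $W$ over the two ends of this chord; since $t$ is concave, each has the form $\{x=\ell(y)\}$ with $\ell$ affine and $t_y\le\ell(y)$ for all $y$, so the cone $C:=\{(x,y):x\le\min(\ell_a(y),\ell_b(y))\}$ contains $W$, and one arranges that $C\cap\Delta_p=W\cap\Delta_p$. Then $\|C\cap\Delta_p\|_2<2\Psi^{-1}(p)$ while $\gamma_2(W)\le\gamma_2(C)$, so it suffices to treat cones (the truly degenerate sub-cases, where an endpoint of the chord is an endpoint of the segment $\Delta_p$, only shrink $C$ and are easier).

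\emph{The cone case.} By rotation invariance of $\gamma_2$ we may rotate $C$ so that its bisector points along $-e_1$; then $C=\{(x,y):x\le x_0-\lambda|y|\}$ for some $x_0\in\R$ and $\lambda\in[0,\infty)$, where $\lambda=\cot\theta$ and $2\theta$ is the opening angle ($\lambda=0$ gives a half-plane; the slab appears as $\theta\to0$ with the apex receding to infinity). One has
\[
\gamma_2(C)=\int_{\R}\gamma_1\big((-\infty,\,x_0-\lambda|y|\,]\big)\,d\gamma_1(y),
\]
a function of $(x_0,\theta)$ strictly increasing in each variable, and the constraint $\|C\cap\Delta_p\|_2<2\Psi^{-1}(p)$ bounds the admissible $x_0$ from above in terms of $\theta$. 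For a half-plane this is just the one-dimensional fact that an interval of Euclidean length $<2\Psi^{-1}(p)$ has $\gamma_1$-measure $<p$ (it is dominated by the centred interval of the same length, and $\Psi$ is increasing). Substituting the extremal value of $x_0$ makes $\gamma_2(C)$ a function of the single parameter $\theta$ --- essentially the function $m(\theta)$ mentioned in the acknowledgments --- and the proposition reduces to $m(\theta)\le p$ on the admissible range, with equality only in the slab limit, which recovers the sharpness for symmetric slabs.

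The crux --- and the reason the cone case gets its own section --- is the analysis of $m(\theta)$: along the constraint curve, opening the cone raises its Gaussian measure but forces its apex back toward the origin, which lowers it, so the two effects must be weighed quantitatively. I would do so by differentiating the displayed integral in $\theta$ and in $x_0$, expressing the derivatives through the Gaussian density and using the log-concavity of $\gamma_1$ to fix their signs, and then tracking the boundary behaviour --- $\gamma_2(C)\to0$ as the cone collapses to a ray, and $\gamma_2(C)$ tending to the slab value $p$ in the opposite limit --- to conclude that $\sup_\theta m(\theta)=p$, attained only in the slab configuration. The strict inequality $\gamma_2(W)<p$ then follows since for a genuine cone the integrand above lies strictly below its slab counterpart pointwise.
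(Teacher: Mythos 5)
Your first reduction (to cones via supporting lines at the two endpoints of the chord $W\cap\Delta_p$) is essentially the paper's Lemma~\ref{lem:red1} and is fine. The problem is the second reduction, where you claim that ``by rotation invariance of $\gamma_2$ we may rotate $C$ so that its bisector points along $-e_1$'' and then treat the constraint as still being $\|C\cap\Delta_p\|_2<2\Psi^{-1}(p)$ with $\Delta_p$ the original vertical line. This doesn't work: rotating $C$ about the origin preserves its Gaussian measure, but it does \emph{not} preserve the chord it cuts on the fixed line $\Delta_p$. After the rotation the constraint becomes a bound on the length of the chord cut on the \emph{rotated} line $R\Delta_p$, which is still at distance $w$ from the origin but is no longer vertical, so you cannot plug the rotated cone into the one-parameter family $\{x\le x_0-\lambda|y|\}$ together with the original constraint. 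In other words, the rotation destroys the alignment between the slicing line and the axis of symmetry of the cone, which is exactly the alignment your subsequent $(x_0,\theta)$-parametrization depends on. The paper's Lemma~\ref{lem:red2} gets around this by Steiner symmetrization of $C$ with respect to the $x$-axis: each vertical slice is replaced by a centred segment of the same length, which preserves the length $\|C\cap\Delta_p\|_2$ exactly and, since a centred interval has at least the Gaussian mass of any translate of itself, only increases $\gamma_2$. That is the correct reduction to symmetric cones, and rotation is not a substitute for it.

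There are also lower-order issues in your cone analysis. Along the constraint curve the two limits are the \emph{slab} (as $\theta\to0$, apex receding to $+\infty$) and the \emph{half-plane} $\{x\le-w\}$ (as $\theta\to\pi/2$, apex on $\Delta_p$), both of measure exactly $p$; the ``cone collapsing to a ray with $\gamma_2\to0$'' limit you mention does not occur on the constraint curve, so the picture of $m$ running from $0$ up to $p$ is wrong. What one actually has to show is that $m(0)=m(\pi/2)=p$ are the two endpoint values and $m(\theta)<p$ strictly in between, which the paper proves by an explicit derivative formula (Lemma~\ref{mprimesign}), convexity of $m$ past the angle where the apex sits at the origin (Lemma~\ref{nicelemma}), and positivity of $m''$ at any interior critical point with the apex to the right of the origin (Lemma~\ref{convexaupointcritique}, which hinges on two sharp numerical estimates in Claims~\ref{claimeasy} and~\ref{penible}). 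Invoking ``log-concavity of $\gamma_1$ to fix the signs'' does not discharge this: the sign of $m'$ genuinely changes once (from negative to positive) on $(0,\pi/2)$ for $p<\tfrac12$, so the crux is ruling out a second sign change, and that requires the quantitative work, not a soft monotonicity argument.
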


To motivate Proposition \ref{prop:planar}, note that it is reminiscent of (a special case of) Lemma 1 in \cite{BS97}, namely the fact that if $W\subset \R^2$ is convex and $\gamma_2(W)\geq \frac{1}{2}$, then $||W\cap u^{\perp}||_2 \geq c:=2\Psi^{-1}(1/2)$ for any $u\in \mathbb{S}^1$. This lemma is crucial within the proof of the main result of \cite{BS97}.

No such statement can hold for $p<\frac{1}{2}$, since there exist (non-symmetric) closed convex sets $W\subset \R^n$, such that $\gamma_n(W)=p$, and $W\cap H = \emptyset$, with $H = e_n^{\perp}$. Nonetheless, Proposition \ref{prop:planar} suffices to carry out the core step of the proof in this more general setting. 

We postpone the proof of Proposition \ref{prop:planar} to the next section.

\begin{proof}[Proof of Theorem \ref{thm:main} assuming Proposition \ref{prop:planar}]

Since $\mu(L, V + x) = \mu(L, V)$ for any $n$-lattice $L$ and any $x \in \mathbb R^n$, it follows that for any $U, V \in \mathcal{K}^n$, one has $\alpha(U,V)=\alpha(U,x+V)$.  Therefore, for $n=1$, it is easy to see that $\alpha([-1,1], K)=0$ if $K$ is unbounded, while $\alpha([-1,1], [x,x+2c])=(2c)^{-1}$ if $K$ is a closed interval of length $2c>0$. In this latter case, since $\gamma_1([-c_p,c_p]):=\gamma_1(K)\leq \gamma_1([-c,c])$, one deduces
$$\alpha([-1,1], [x,x+2c])=(2c)^{-1} \leq (2c_p)^{-1}=(2\Psi^{-1}(\gamma_1(K)))^{-1},$$
as desired.

Now let $n\geq 2$ and assume that Theorem \ref{thm:main} has been proven in dimension $n-1$. Fix $K$ a closed convex set in $\R^n$, with $p:=\gamma_n(K)>0$. We assume $p<\frac{1}{2}$, since otherwise the
inequality follows from Theorem \ref{BSresult}. 

Assume for the sake of contradiction that $\alpha(B_2^n,K)>f(p)=\frac{1}{2c_p}$, with $c_p:=\Psi^{-1}(p)$. By definition of $\alpha$, there exists an $n$-lattice $L$ with $\lambda_n(2c_p B_2^n, L)\leq 1$ and $\mu(L,K)>1$. This means that there exist $u_1, \ldots, u_n \in (2c_p)B_2^n \cap L$ spanning $\R^n$, and $a\in \R^n$ such that $(a+L)\cap K=\emptyset$.

Fix $(e_1, \ldots, e_n)$ an orthonormal basis of $\R^n$, such that $e_n^{\perp}=\text{Span}(u_1, \ldots, u_{n-1})$. Hence $L':= e_n^{\perp} \cap L$, is an $(n-1)$-lattice satisfying $\lambda_{n-1}(2c_p B_2^{n-1}, L')\leq 1$, where we identify $\R^{n-1}$ with $e_n^{\perp}$ and denote $B_2^{n-1}:=B_2^n \cap e_n^{\perp}$. Denote by $\pi_n$ the orthogonal projection from $\mathbb R^n$ onto $\R e_n$. Since $\pi_n(u_n)\neq 0$, we see that $\pi_n(L)\supseteq \lambda \Z e_n$, where $\lambda:=|\langle u_n, e_n\rangle | \le 2c_p$. Hence $\pi_n(a+L)$ contains a coset $a_n + \lambda \Z$, where we write $a = (a', a_n)$ with $a' \in e_n^\perp, a_n \in \mathbb R$.

For $z\in \R$, denote $K_z:=(K-ze_n) \cap e_n^{\perp}$, the $(n-1)$-dimensional section of $K$ at height $z$ (or rather, its projection onto $e_n^{\perp}$). By taking intersection with the affine halfplane $H_{a_n}=a+e_n^{\perp}=a_n e_n +e_n^{\perp}$, one sees that the assumption $(a+L)\cap K=\emptyset$ yields $(a'+L')\cap K_{a_n}=\emptyset$. Indeed
$$(a+L) \cap H_{a_n}=
(a+ L) \cap (a +e_n^{\perp})=a+(L \cap e_n^{\perp})=a+L'=a_n e_n +(a'+L')$$ 
$$\text{ while} \hspace{4mm} K \cap H_{a_n} = K \cap (a_n e_n+e_n^{\perp}) = a_n e_n + \left((K-a_ne_n)\cap e_n^{\perp}\right)=a_n e_n + K_{a_n} $$
so that $\emptyset=(a+L)\cap K=\left((a+L) \cap H_{a_n} \right) \cap \left(K \cap H_{a_n} \right)= a_n e_n + \left( (a'+L') \cap K_{a_n}\right)$.

Thus, we have shown that $\mu(K_{a_n}, L')>1$. Since $L'$ satisfies $\lambda_{n-1}(2c_p B_2^{n-1}, L') \leq 1$, the inductive assumption gives us $\gamma_{n-1}(K_{a_n})<p$. 

Now consider an arbitrary $b \in a + L$ in the coset of $a$. Since $b+L=a+L$ for any such $b$, the same argument as above yields that, if one writes $b=(b',b_n)$ (with $b'\in e_n^{\perp}$ as before), $\gamma_{n-1}(K_{b_n})<p$ for any $b \in a+L$.
Since $\pi_n(L)\supseteq \lambda \Z e_n$, with $\lambda:=|\langle u_n, e_n\rangle | \in (0, 2c_p]$, we deduce that $\gamma_{n-1}(K_z) <p$ for any $z\in a_n +\lambda \Z$.

Let $W\subset \R^2$ be the two-dimensional Ehrhard symmetrization of $K$: that is, $W=\cup_{z\in \R} (W_z+ z e_2)$ where $W_z=(-\infty, t_z] e_1$ and $t_z=\Phi^{-1}(\gamma_{n-1}(K_z))$ (i.e., $t_z$ is chosen so that $\gamma_1(W_z)=\gamma_{n-1}(K_z)$). Then $W$ is closed and convex, by the $(n-1)$-dimensional Ehrhard inequality, and $\gamma_2(W)=\gamma_n(K)=p$. The fact that $\gamma_{n-1}(K_z) <p$ for any $z\in a_n +\lambda \Z$, tells us that $t_z<-w:=\Phi^{-1}(p)$ for any $z\in a_n +\lambda \Z$. By convexity of $W$, this implies that $W\cap \Delta_p$ is a segment of length strictly less than $\lambda$, where $\Delta_p$ is the vertical line $\Delta_p=-w e_1+\R e_2$. Therefore $||W \cap \Delta_p||_2 < \lambda \leq 2 c_p=2\Psi^{-1}(p)$, which yields $\gamma_2(W)<p$ by Proposition \ref{prop:planar}, contradiction.
\end{proof}

It thus only remains to prove Proposition \ref{prop:planar}. In the next section, we reduce Proposition \ref{prop:planar} to a planar isoperimetric question involving a one-parameter family of cones (see Lemma \ref{lem:red2}). Then, we solve this isoperimetric question by elementary methods, completing the proof of Proposition \ref{prop:planar}, and hence also the proof of Theorem \ref{thm:main}.

\section{Proof of Proposition \ref{prop:planar}}
\label{section:conescomputing}

Fix $p < \frac{1}{2}$. We denote $-w:=\Phi^{-1}(p)$, and $\Delta_p$ the vertical line $\Delta_p:=-w e_1 + \R e_2$. For any closed interval $A\subset \R^2$, we denote $||A||_2 \in [0, \infty ]$ its euclidean length. We remind the notation $\Psi(u)=\gamma_1([-u,u])$. Recall that our goal is to prove the following proposition:

\planar*

If $p\leq \frac{1}{2}$, denote by $\mathcal{C}_2(p)$ the set of cones $C\subset \R^2$, such that $C \cap (ae_1+\R e_2)=\emptyset$ if $a>0$ is large enough (which we call ``cones infinite on the left'') and such that $||C \cap \Delta_p||_2 \leq 2\Psi^{-1}(p)$. We claim that Proposition \ref{prop:planar} reduces to the following Lemma \ref{lem:red1}: 
\begin{lem}
\label{lem:red1}
Fi $p\leq \frac{1}{2}$. If $C\in \mathcal{C}_2(p)$, then $\gamma_2(C)<p$.
\end{lem}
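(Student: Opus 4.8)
The plan is to prove Lemma \ref{lem:red1} by reducing to a one-parameter family of extremal cones and then checking the inequality directly on that family. A cone $C$ infinite on the left in $\R^2$ is determined by its apex $v=(v_1,v_2)$ and two boundary rays emanating from $v$; equivalently, by the apex together with the angular aperture and orientation of the opening. First I would argue that, fixing the constraint $\|C\cap\Delta_p\|_2\le 2\Psi^{-1}(p)$, the Gaussian measure $\gamma_2(C)$ is maximized when the segment $C\cap\Delta_p$ has \emph{exactly} the maximal allowed length $2\Psi^{-1}(p)$ and is positioned symmetrically about the $e_1$-axis, i.e. $C\cap\Delta_p=\{-w\}\times[-\Psi^{-1}(p),\Psi^{-1}(p)]$; this uses that $\gamma_1$ is a log-concave symmetric density on each vertical line, so any vertical chord of given length carries the most Gaussian mass when centered at $0$, and that translating $C$ upward or downward to center the chord on $\Delta_p$ can only increase the total mass (a shifting/rearrangement argument along the $e_2$-direction, monotone because the half-line $(-\infty,t_z]$ picture pushes mass toward smaller $|e_2|$). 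So it suffices to consider cones whose intersection with $\Delta_p$ is this fixed centered segment.

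Next I would reduce the two remaining degrees of freedom (the position of the apex along the line carrying the midpoint, and the aperture — which are linked once the chord on $\Delta_p$ is pinned down) to a single parameter. With the chord on $\Delta_p$ fixed to $\{-w\}\times[-\Psi^{-1}(p),\Psi^{-1}(p)]$, a cone in the family is specified by the location of its apex on the horizontal line through the midpoint $(-w,0)$ — say the apex is at $(-w-r,0)$ for $r\in(0,\infty]$, with $r=\infty$ the degenerate slab $(-\infty,-w]\times\R$ — since the two boundary rays are then forced to pass through $(-w,\pm\Psi^{-1}(p))$. (One also has to treat apexes placed to the right of $\Delta_p$, but those cones are contained in the slab $x\le -w$ up to the chord and carry strictly less mass, or can be folded into the $r$-family by monotonicity.) This produces the one-parameter family $C_r$; define $m(r)=\gamma_2(C_r)$. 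I would then show $m$ is monotone (decreasing) in $r$, or at least that $\sup_r m(r)$ is attained in the limit $r\to\infty$, where $C_\infty=(-\infty,-w]\times\R$ is exactly the slab with $\gamma_2(C_\infty)=\gamma_1((-\infty,-w])=\Phi(-w)=p$. Since for finite $r$ the inequality should be strict, this gives $\gamma_2(C)<p$.

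The main obstacle I expect is the computation/monotonicity of $m(r)=\gamma_2(C_r)$: this is where the promised simulations of ``the function $m(\theta)$'' and Lemma \ref{lem:red2} (attributed to Nazarov) come in. Concretely, one must show that deforming the slab $(-\infty,-w]\times\R$ by chopping it into a cone with apex at finite distance $r$ strictly decreases Gaussian measure, even though narrowing near the apex is partly compensated by the cone opening up beyond $\Delta_p$ into the region $x>-w$. I would attack this by writing $m(r)$ as an integral over $e_2=y$ of $\gamma_1$ of the horizontal slice $(-\infty,\ell(y)]$, where $\ell(y)$ is piecewise-linear in $y$ with a kink at $|y|=\Psi^{-1}(p)$, differentiating in $r$, and reducing the sign of $m'(r)$ to a clean one-variable inequality about the Gaussian density — the kind of statement provable ``by elementary methods'' as the excerpt advertises. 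If direct differentiation is unwieldy, an alternative is a two-point symmetrization / Ehrhard-type move comparing $C_r$ directly with $C_\infty$. Either way, once Lemma \ref{lem:red1} is in hand, Proposition \ref{prop:planar} follows: given $W$ as in the proposition, its recession cone (or a cone obtained by replacing $W$ with the cone over the chord $W\cap\Delta_p$ from a suitable apex, using concavity of $(t_y)$) lies in $\mathcal{C}_2(p)$ and contains $W$, or more precisely $W\subseteq C$ for some $C\in\mathcal{C}_2(p)$ with $\|C\cap\Delta_p\|_2=\|W\cap\Delta_p\|_2<2\Psi^{-1}(p)$, whence $\gamma_2(W)\le\gamma_2(C)<p$.
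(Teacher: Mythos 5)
Your high-level plan (symmetrize to cones symmetric about the $x$-axis, then parametrize a one-parameter family and verify the bound on it) mirrors the paper's strategy, and the symmetrization step --- which the paper carries out as a Steiner symmetrization with respect to the $x$-axis in Lemma~\ref{lem:red2} --- is sound, though your ``translate $C$ to center the chord'' phrasing is not quite the right operation (translation does not symmetrize; one must replace each vertical slice of $C$ with a centered interval of the same length, which preserves the cone structure because vertical slice lengths are affine in $x$). The real problems begin with the one-parameter family. A cone that is infinite on the left and meets $\Delta_p=\{x=-w\}$ in a bounded segment must have its apex strictly to the \emph{right} of $\Delta_p$; placing the apex at $(-w-r,0)$ gives a cone that either misses $\Delta_p$ entirely (if it opens left) or fails to be infinite on the left (if it opens right), so it never lies in $\mathcal{C}_2(p)$. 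Moreover, the family has \emph{two} distinct degenerate limits, both of Gaussian measure exactly $p$: as the apex approaches $(-w,0)$ the cone converges to the half-plane $\{x\le -w\}$, and as the apex recedes to $+\infty$ the cone converges to the horizontal strip $\R\times[-\Psi^{-1}(p),\Psi^{-1}(p)]$. You identify only one of these limits, and you misidentify which value of the parameter produces it.

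This is not a cosmetic slip: it breaks your key step. Because the symmetrized measure $m$ takes the \emph{same} value $p$ at both ends of the parameter interval, $m$ cannot be monotone (and ``sup attained in the limit'' holds at \emph{both} endpoints, with the whole difficulty being to show it is \emph{not} attained anywhere inside). This forced non-monotonicity is exactly why the paper's Lemma~\ref{lem:red2} requires the finer analysis: computing $m'(\theta)$ (Lemma~\ref{mprimesign}), checking $m'<0$ near $\theta=0$ and $m'>0$ near $\theta=\pi/2$, proving convexity on $[\theta_0,\pi/2)$ (Lemma~\ref{nicelemma}), and showing $m''>0$ at every interior critical point (Lemma~\ref{convexaupointcritique}), so that the unique critical point is a minimum. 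Your proposed reduction to a monotonicity claim therefore cannot be repaired without essentially redoing this second-derivative analysis. A further small error: for a genuine cone, the function $\ell(y)$ describing horizontal slices has its only kink at the apex level $y=0$, not at $|y|=\Psi^{-1}(p)$; the boundary rays extend past $\Delta_p$ with no change in slope.
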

To show the reduction, it suffices to argue that any $W$ as in Proposition \ref{prop:planar} is contained in a cone $C$ as in Lemma \ref{lem:red1}. This is almost true, up to two exceptions, which roughly speaking are the symmetric slab $S_p:=\{(x,y) : |y|\leq \Psi^{-1}(p)\}$, and the halfplane $H_p =\{(x,y) : x\leq -w\}$, for which it is easy to see directly that the desired result holds. 

\begin{proof}[Proof that Lemma \ref{lem:red1} implies Proposition \ref{prop:planar}]
If $W\cap \Delta_p=\emptyset$, then, since $W$ is a ``horizontal hypograph'', we have $W\subset H_p$, and moreover $(-w,0)\notin W$. Hence $D\cap W=\emptyset$ for some small disk around $(-w,0)$. One deduces that $\gamma_2(W) < \gamma_2(H_p)=p$. Similarly, if $W\cap \Delta_p$ is a singleton, then $W\subset H_p$ and $W\cap D=\emptyset$ for some small disk centered somewhere on $\Delta_p$, yielding again $\gamma_2(W)<p$.

Hence we may assume that $W\cap \Delta_p$ is a segment $[a,b]$; more formally, we can assume that $W\cap \Delta_p=-w e_1+[a,b] e_2$, for some $a<b$. Let $T_a$ and $T_b$ be two lines tangent to $W$, respectively at $Q_a=(-w,a)$ and at $Q_b=(-w,b)$. Since $W$ is a (horizontal) hypograph, $T_a$ must have non-negative slope, and $T_b$ non-positive slope. If both $T_a$ and $T_b$ are horizontal, then $W \subset \{(x,y) : a\leq y\leq b\}$, so $\gamma_2(W)\leq \gamma_1([a,b])<p$, where the last inequality follows because $b - a < 2\Psi^{-1}(p)$. So we may assume one of the two slopes is non-zero. If $T_a$ or $T_b$ is vertical, then $W\subset H_p$, and since $W\cap \Delta_p \neq \Delta_p$, we conclude as above that $\gamma_2(W)<p$. Hence we can assume neither of $T_a, T_b$ is vertical.
Denote by $H_a$ the halfplane above $T_a$, and $H_b$ the halfplane below $T_b$. Then, since $T_a$ and $T_b$ are tangent lines to $W$, we see that $W \subset C:= H_a \cap H_b$, and, since at least one of $T_a, T_b$ is non-horizontal, and both are non-vertical, we see that $C\in \mathcal{C}_2(p)$.
\end{proof}
Note that, since $\gamma_2$ has strictly positive density, Lemma \ref{lem:red1} reduces to considering those $C\in \mathcal{C}_2(p)$ such that the segment $C\cap \Delta_p$ has length $2\Psi^{-1}(p)$ exactly. Next, we reduce to considering cones $C\in \mathcal{C}_2(p)$ which are symmetric with respect to the $x$-axis. 
\begin{lem}
\label{lem:red2}
Let $C$ be a cone ``infinite on the left,'' symmetric with respect to $x$-axis, and such that the segment $C\cap \Delta_p$ has (euclidean) length $2 \Psi^{-1}(p)$. Then $\gamma_2(C)<p$.
\end{lem}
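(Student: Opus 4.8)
The plan is to turn Lemma \ref{lem:red2} into a one‑parameter problem indexed by the opening angle of the cone and then study the resulting function by calculus.

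\emph{Set‑up.} Write $c_p:=\Psi^{-1}(p)$ and recall $-w:=\Phi^{-1}(p)$, so $w>0$ since $p<\tfrac12$. A cone that is infinite on the left and symmetric about the $x$‑axis has the form $C_\theta=\{(x,y):x+|y|\cot\theta\le v\}$ for some apex $(v,0)$ and half‑angle $\theta\in(0,\tfrac\pi2)$; then $C_\theta\cap\Delta_p$ is the segment $\{-w\}\times[-(v+w)\tan\theta,(v+w)\tan\theta]$ (empty when $v+w\le 0$), so the hypothesis $\|C_\theta\cap\Delta_p\|_2=2c_p$ forces $v=v(\theta):=c_p\cot\theta-w$. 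Thus the family is genuinely one‑parameter; set $m(\theta):=\gamma_2(C_\theta)$. As $\theta\to\tfrac\pi2$ the cone degenerates to the halfplane $H_p=\{x\le -w\}$, and as $\theta\to 0$ it degenerates to the slab $S_p=\{|y|\le c_p\}$ (because $v(\theta)\to+\infty$ while the opening collapses onto $|y|\le c_p$), so $m(0^+)=\gamma_2(S_p)=\Psi(c_p)=p$ and $m(\tfrac\pi2{}^-)=\gamma_2(H_p)=\Phi(-w)=p$. Since $\gamma_2$ has strictly positive density, it is enough to prove $m(\theta)<p$ for all $\theta\in(0,\tfrac\pi2)$.

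\emph{Reduction to a derivative computation.} The function $m$ extends continuously to $[0,\tfrac\pi2]$ with equal endpoint values $p$, so it suffices to show $m'$ changes sign exactly once on $(0,\tfrac\pi2)$, from negative to positive; equivalently, that $m'$ is increasing through each of its zeros, i.e. every critical point of $m$ is a strict local minimum. To compute $m'$, start from $m(\theta)=2\int_0^\infty\Phi\big((c_p-y)\cot\theta-w\big)\varphi(y)\,dy$ (with $\varphi$ the standard Gaussian density), differentiate under the integral, substitute $z=c_p-y$, and complete the square in the exponent $((c_p-y)\cot\theta-w)^2+y^2$, whose minimum over $z$ equals $v(\theta)^2\sin^2\theta$. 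This yields the closed form
$$m'(\theta)=\frac{\sqrt{2\pi}}{\pi}\,e^{-v(\theta)^2\sin^2\theta/2}\Big(\varphi\big(v(\theta)\cos\theta\big)-\rho(\theta)\,\Phi\big(v(\theta)\cos\theta\big)\Big),\qquad \rho(\theta):=c_p\sin\theta+w\cos\theta>0,$$
so $m'(\theta)$ has the sign of $B(\theta):=\varphi(v\cos\theta)-\rho\,\Phi(v\cos\theta)$. Using $c_p=\Psi^{-1}(p)<\Psi^{-1}(\tfrac12)<\sqrt{2/\pi}$ one checks directly that $B<0$ near $\theta=0$ (there $v\cos\theta\to+\infty$) and $B>0$ near $\theta=\tfrac\pi2$ (there $v\cos\theta\to0$, $\rho\to c_p$, and $\varphi(0)-c_p\Phi(0)=\tfrac1{\sqrt{2\pi}}-\tfrac{c_p}{2}>0$), matching $m\to p^{-}$ at both ends.

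\emph{The key step.} It remains to show $B'>0$ at every zero of $B$. Since $\varphi'=-x\varphi$, we have $B'(\theta)=-\varphi(v\cos\theta)\,(v\cos\theta)'\big(v\cos\theta+\rho\big)-\rho'\,\Phi(v\cos\theta)$, and at a zero of $B$ (where $\varphi(v\cos\theta)=\rho\,\Phi(v\cos\theta)$) this has the sign of $-\big(\rho\,(v\cos\theta)'(v\cos\theta+\rho)+\rho'\big)$. I would simplify this using two identities valid for all $\theta$, both consequences of $v+w=c_p\cot\theta$ (equivalently $\cos\theta=\tfrac{v+w}{r}$, $\sin\theta=\tfrac{c_p}{r}$ with $r=\sqrt{(v+w)^2+c_p^2}$): namely $\rho'(\theta)=v(\theta)\sin\theta$ and $v(\theta)\cos\theta+\rho(\theta)=r$. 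In the variable $s:=v(\theta)+w=c_p\cot\theta\in(0,\infty)$ this reduces the desired inequality to the negativity of the explicit quartic $N(s)=-(c_p^2+ws)(s^3+2c_p^2s-c_p^2w)+c_p^2(s-w)$. The point is that $N$ need not be negative for all $s>0$, so one must feed in the equation defining the critical points — by $v\cos\theta+\rho=r$ it reads $\sqrt{s^2+c_p^2}=v\cos\theta+\lambda(v\cos\theta)$ with $\lambda:=\varphi/\Phi$ — and use monotonicity of $\lambda$ together with standard two‑sided bounds for the Gaussian hazard rate to confine such $s$ to a range on which $N(s)<0$; this finishes the proof.

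\emph{Main obstacle.} The crux is exactly this last step: $m'$ is an integral of a sign‑changing integrand and $m''$ is not globally one‑signed, so neither convexity of $m$ nor a single sign change of $m'$ is free, and one is forced to couple the transcendental relation defining the critical points with a polynomial sign condition. Pinning down the location of the critical $s$ precisely enough to conclude $N(s)<0$ there is where the real work lies (and presumably where the numerical plots of $m(\theta)$ mentioned in the acknowledgments helped locate the right estimates); everything upstream is routine once the closed form for $m'$ is in hand.
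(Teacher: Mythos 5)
Your setup, parametrization, and derivative computation are all correct and equivalent to the paper's. (The paper computes $m'(\theta)$ by a geometric infinitesimal argument with triangles and cones about the pivot $P$, while you differentiate under the integral sign and complete the square; both yield the same closed form, and indeed writing $h=c_p$, $\rho=w_y$, $u=v\cos\theta$, your formula is exactly the paper's Lemma \ref{mprimesign} up to the overall factor of $2$ coming from your $m=\gamma_2(C)$ versus the paper's $m=\gamma_2(C\cap H_+)$.) Your reduction — show $B'>0$ at every zero of $B$, compute $B'$ at a zero, use the identities $\rho'=v\sin\theta$ and $v\cos\theta+\rho=r$ — also matches the paper's path to \eqref{eq:lem5_ineq}, and your quartic $N(s)$ is the same polynomial.

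However, there is a genuine gap, and it is exactly at the point you flag as ``the crux.'' You write that one must ``confine such $s$ to a range on which $N(s)<0$'' and that ``this finishes the proof,'' but you never carry out this confinement or the polynomial estimate. This is not a routine step: the paper devotes Claims \ref{claimeasy} and \ref{penible} to it, and Claim \ref{claimeasy} in particular requires a careful bound showing that a critical point with $y>0$ forces $(y')^2>2/\pi$ (using the two-sided $\Phi$ estimate $\phi(u)<\sqrt{\pi/2}+u$ and the numerical inequality $(1-u^2)e^{u^2/2}<1$), while Claim \ref{penible} is a nontrivial case analysis on $h^2$ using Cauchy--Schwarz and explicit rational approximations to $\pi$. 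Without these (or some substitute), your argument is not a proof.

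There is also a structural issue you do not address. The paper's polynomial argument only applies to critical $\theta<\theta_0$, i.e.\ $s=x>w$: the key algebraic step $x^3+2xh^2-wh^2\geq (x-w)(x^2+2h^2)$ depends on $x\geq w$, and for $x<w$ it goes the wrong way. For the complementary range $\theta\geq\theta_0$ (apex on the nonpositive $x$-axis, $s\leq w$), the paper uses an entirely separate and elegant \emph{geometric} argument (Lemma \ref{nicelemma}): a reflection about the line $(PY)$ shows $m$ is strictly convex on $[\theta_0,\pi/2)$, with no polynomial estimate at all. Your plan to push the quartic $N(s)<0$ uniformly over $s\in(0,\infty)$ would require a new argument for $s\in(0,w]$, and as you yourself observe $N$ is not globally negative, so you cannot simply check $N<0$ pointwise; you would have to confine the critical $s$ in that region too. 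In short, the ``upstream'' part of your proposal is correct and aligned with the paper, but the part you label ``the real work'' — bounding the critical $s$ and then verifying the polynomial inequality on that set, and separately handling $\theta\geq\theta_0$ — is precisely what is missing.
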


\begin{proof}
[Proof that Lemma \ref{lem:red2} implies Lemma \ref{lem:red1}]
Let $C$ be a cone as in Lemma \ref{lem:red1}. Consider the cone $C'$ which is the Steiner symmetrization of the cone $C$ with respect to the $x$-axis: for every $x \in \mathbb R$, let $\ell_x = \|C \cap (xe_1 + \mathbb Re_2)\|_2$, and let 
$$C' = \left\{(x, y) \in \mathbb R^2:  (xe_1 + \mathbb Re_2) \cap C \neq \emptyset, |y| \leq \frac{l_x}{2} \right\}.$$

That is, $C'$ is defined by replacing each vertical slice of $C$ by a vertical segment of the same length centered on the $x$-axis (see Figure \ref{fig:steiner_cone}). Note that this does not decrease the Gaussian measure of the slice, since $\gamma([a, b]) \le \gamma([-\frac{b - a}{2}, \frac{b - a}{2}])$ for any $a \le b$.

It is easily verified that $C'$ is a cone, with $||C'\cap \Delta_p||_2=||C\cap \Delta_p||_2=l_{-w}$. Moreover,
$$\gamma_2(C)=\int_{-\infty}^\infty \gamma_1((C - x e_1) \cap \mathbb Re_2) \,d\gamma_1(x) \leq \int_{-\infty}^\infty \gamma_1((C' - x e_1) \cap \mathbb Re_2) \,d\gamma_1(x) = \gamma_2(C').$$

Assuming Lemma \ref{lem:red2}, we have $\gamma_2(C') < p$, which then implies $\gamma_2(C) < p$, as desired.
\end{proof}

\begin{figure}[ht]
\centering
\includegraphics[width=0.7 \textwidth]{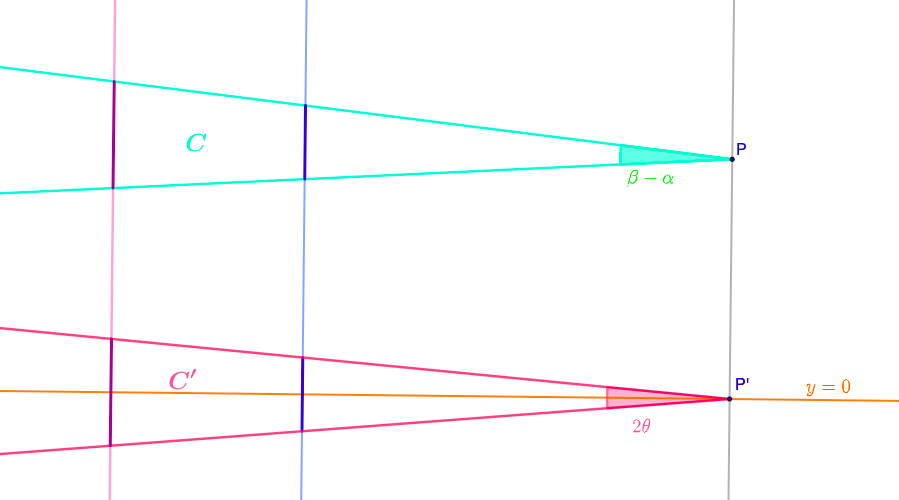}
\caption{Symmetrizing a 2D cone}\label{fig:steiner_cone}
\end{figure}

Therefore, to prove Proposition \ref{prop:planar}, it is enough to prove Lemma \ref{lem:red2}. The next subsection is dedicated to proving this Lemma. We are indebted to F. Nazarov for providing us a complete proof, which is partly reproduced in the next subsection (in particular Lemma \ref{nicelemma} and Claims \ref{claimeasy},\ref{penible}).

\subsection{Proof of Lemma \ref{lem:red2}}
\label{section:Nazarovandmore}
Fix $p\in (0,\frac{1}{2})$. Let  $w= -\Phi^{-1}(p)$ as above, and also let $h=\Psi^{-1}(p)>0$, i.e., $w$ and $h$ are chosen so that $\gamma_1((-\infty,-w]) = \gamma_1([-h,h])=p$. Let $P=(-w,h)$ and $P'=(-w,-h)$.
 Occasionally, we may denote $h=h_p$ and $w=w_p$.

Let $H_+=\{(x,y)\in \R^2 : y\geq 0\}$ be the upper half-plane. Fix some $y>-w$, let $Y=(y,0)\in \R^2$, and denote by $C$ the symmetric convex cone bounded by the two rays $\overrightarrow{YP}$ and $\overrightarrow{YP'}$. Let $2\theta$ be the angle of $C$, i.e. $\tan(\theta)=\frac{h}{w+y}$. It will be more convenient for us to think of $C$ as a function of $\theta$, rather than of $y$; as $\theta$ runs from $0$ to $\frac{\pi}{2}$, $C_\theta$ runs over all the cones considered in Lemma \ref{lem:red2}. Hence, denoting $m(\theta)=\gamma_2(C_{\theta}\cap H_+)=: \gamma_2(C_{\theta}^+)$, our goal is simply to prove that $m(\theta) < \frac{p}{2}$ for all $\theta \in (0, \frac{\pi}{2})$.

As $\theta$ goes to $0$, the characteristic function of $C_\theta$ converges pointwise (outside of a set of measure $0$) to that of the symmetric slab of width $2h$, while as $\theta \to \frac{\pi}{2}$, it converges to the characteristic function of the half-space $\{(x, y): x \le -w\}$ (see Figures \ref{fig:h} and \ref{fig:v}, respectively). Since each of these sets has Gaussian measure $p$, by dominated convergence, we have the following:

\begin{fact}
\label{supereasyfact}
Let $m$ be as above. Then $\lim_{\theta\to 0} m(\theta) = \lim_{\theta\to \frac{\pi}{2}} m(\theta) =\frac{p}{2}$.
\end{fact}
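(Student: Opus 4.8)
The plan is to turn the dominated-convergence heuristic sketched above into a rigorous argument. First I would fix an explicit description of $C_\theta$, writing a generic point of $\R^2$ as $(u,v)$ (reserving $y=y(\theta)$ for the abscissa of the apex $Y=(y(\theta),0)$). Solving $\tan\theta=\tfrac{h}{w+y}$ gives $y(\theta)=h\cot\theta-w$; note $y(\theta)>-w$ for all $\theta\in(0,\tfrac\pi2)$, with $y(\theta)\to+\infty$ and $\tan\theta\to0$ as $\theta\to0^+$, while $y(\theta)\to-w$ and $\tan\theta\to+\infty$ as $\theta\to\tfrac\pi2^-$. Since $C_\theta$ is the cone with apex $Y$, axis the negative $u$-direction, and half-angle $\theta$,
$$C_\theta=\{(u,v) : u\le y(\theta),\ |v|\le (y(\theta)-u)\tan\theta\},\qquad C_\theta^+=C_\theta\cap H_+.$$
Because $C_\theta$ and $\gamma_2$ are both invariant under $(u,v)\mapsto(u,-v)$, and the two halves of $C_\theta$ meet only in the $\gamma_2$-null line $\{v=0\}$, we have $m(\theta)=\gamma_2(C_\theta^+)=\tfrac12\gamma_2(C_\theta)$; so it suffices to show $\gamma_2(C_\theta)\to p$ at both endpoints.

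Next I would verify pointwise convergence of the indicators $\mathbf{1}_{C_\theta}$ off a $\gamma_2$-null set. For $\theta\to0^+$: fix $(u_0,v_0)$ with $|v_0|\ne h$; since $y(\theta)\to+\infty$ we have $u_0\le y(\theta)$ eventually, and $(y(\theta)-u_0)\tan\theta=h-(w+u_0)\tan\theta\to h$, so $\mathbf{1}_{C_\theta}(u_0,v_0)\to1$ if $|v_0|<h$ and $\to0$ if $|v_0|>h$; that is, $\mathbf{1}_{C_\theta}\to\mathbf{1}_{S_h}$ off $\{|v|=h\}$, where $S_h=\{(u,v):|v|\le h\}$ is the symmetric slab of width $2h$. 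For $\theta\to\tfrac\pi2^-$: fix $(u_0,v_0)$ with $u_0\ne-w$; if $u_0<-w$ then $y(\theta)-u_0\ge-w-u_0>0$ and $\tan\theta\to+\infty$, so $(y(\theta)-u_0)\tan\theta\to+\infty$ while $u_0<-w<y(\theta)$, giving $\mathbf{1}_{C_\theta}(u_0,v_0)\to1$; if $u_0>-w$ then $y(\theta)<u_0$ eventually, giving $\mathbf{1}_{C_\theta}(u_0,v_0)\to0$. Thus $\mathbf{1}_{C_\theta}\to\mathbf{1}_{H_w}$ off $\{u=-w\}$, where $H_w=\{(u,v):u\le-w\}$.

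Finally, dominated convergence with the integrable dominating function $\equiv1$ (the two exceptional lines being $\gamma_2$-null) yields $\gamma_2(C_\theta)\to\gamma_2(S_h)=\gamma_1([-h,h])=\Psi(h)=p$ as $\theta\to0^+$, and $\gamma_2(C_\theta)\to\gamma_2(H_w)=\gamma_1((-\infty,-w])=\Phi(-w)=p$ as $\theta\to\tfrac\pi2^-$, using $h=\Psi^{-1}(p)$ and $-w=\Phi^{-1}(p)$; combined with $m(\theta)=\tfrac12\gamma_2(C_\theta)$ this is the assertion. There is no real obstacle here (hence ``super easy fact''): the only items meriting a word of care are the limits $y(\theta)\to+\infty$, $y(\theta)\to-w$ and $\tan\theta\to+\infty$ that pin down the limiting regions, and the triviality that a line in $\R^2$ carries no Gaussian mass. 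One could alternatively pass to the limit directly in $m(\theta)=\int_{-\infty}^{y(\theta)}\gamma_1\big([0,(y(\theta)-u)\tan\theta]\big)\,d\gamma_1(u)$, but the indicator-function argument is the cleanest.
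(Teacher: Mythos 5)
Your argument is correct and is essentially the paper's proof: pointwise convergence of the indicators of $C_\theta$ (off the null lines $\{|v|=h\}$ and $\{u=-w\}$) to the indicators of the slab and the half-space, followed by dominated convergence, with the harmless extra step of passing through $\gamma_2(C_\theta)=2m(\theta)$ by symmetry. No issues to report.
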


\begin{figure}[ht]
\centering
\includegraphics[width=0.85 \textwidth]{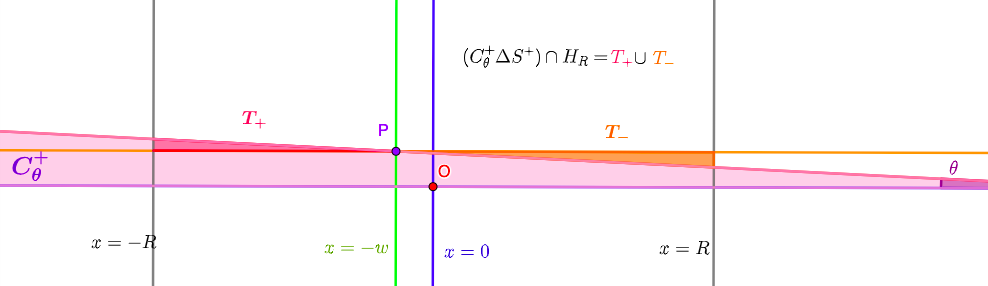}
\caption{Convergence of $C$ to a horizontal strip as $\theta\to 0$}
\label{fig:h}
\end{figure}

\begin{figure}[ht]
\centering
\includegraphics[width=0.6 \textwidth]{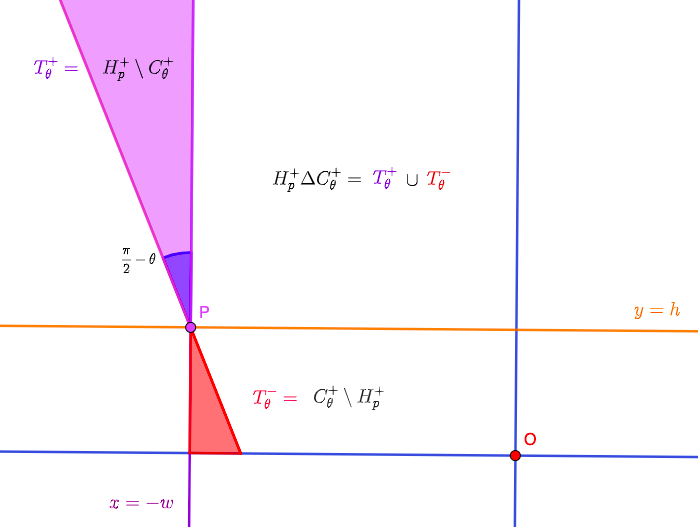}
\caption{Convergence of $C$ to a half-space as $\theta\to \frac{\pi}{2}$}
\label{fig:v}
\end{figure}

We now compute $m'(\theta)$, for which we introduce some more notations. Denote by $\theta_0=\arctan\left(\frac{h}{w}\right)$, the angle between the $x$-axis and $(PO)$; i.e., $\theta_0$ is the angle such that the apex of $C$ lies at the origin. Denote $c=\cos(\theta)$, $s=\sin(\theta)$, $h_y=ys$, $w_y=\frac{h}{s}-yc=\frac{ys}{\tan(\theta_0-\theta)}$, $u=yc=u_{\theta}$ (since $y=y_{\theta}=\frac{h}{\tan(\theta)}-w$). See Figure \ref{mainFigure} for the geometric meaning of $h_y$ and $w_y$. Note that as $\theta \to 0$, $w_y \to w$ and $h_y \to h$; while as $\theta \to \frac{\pi}{2}$, $h_y\to -w$ and $w_y\to h$.

Also denote by $N\sim \mathcal{N}(0,1)$ a standard gaussian, and let $\lambda_{\theta}:=\E(N|N>-u)$ (with $u=yc$); integrating by parts, one sees that $\lambda_\theta = \frac{\phi(u)}{\Phi(u)}$, where $\phi(t)= \frac{e^{-t^2/2}}{\sqrt{2\pi}}$ is the density of of a (real) Gaussian, and let $\Phi(t)=\gamma_1((-\infty, t])$ denote the Ehrhard function, as before.

\begin{figure}[ht]
\centering
 \includegraphics[width=0.9 \textwidth]{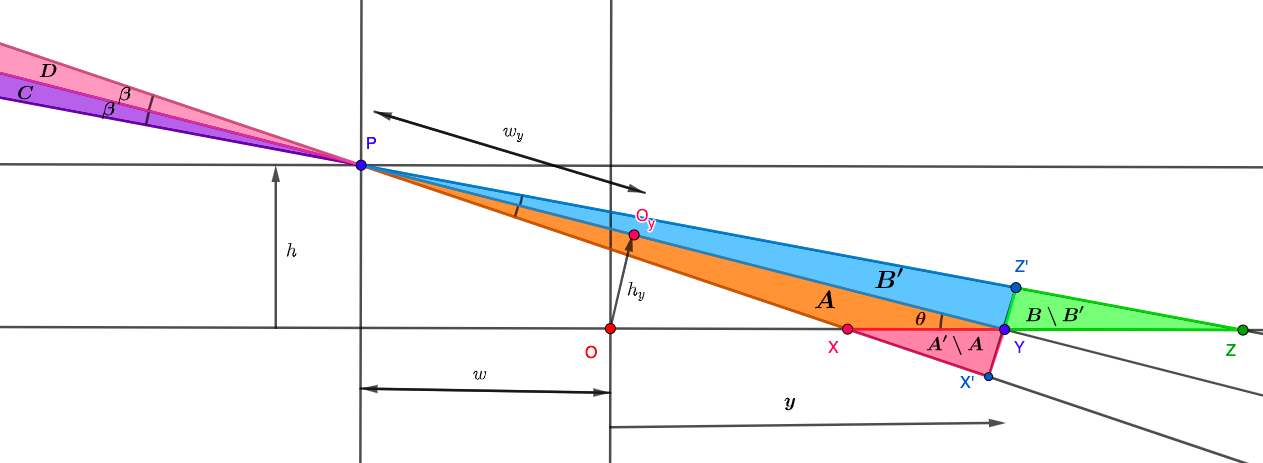} 
 \caption{Notations for the proof of Lemmas \ref{mprimesign} and \ref{convexaupointcritique}}
  \label{mainFigure}
\end{figure}
\begin{lem}
\label{mprimesign}
Let $p\in (0,1)$. Then for any $\theta\in [0,\frac{\pi}{2}]$, \begin{equation}\label{eq:mpr}
    m'(\theta)=\PP(N\leq yc) \frac{e^{-h_y^2/2}}{\sqrt{2\pi}} (\lambda_{\theta} - w_y)
\end{equation} 
In particular, $m'(\theta)$ has the same sign as $\lambda_{\theta}-w_y$.
\end{lem}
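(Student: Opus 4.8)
The plan is to compute $m'(\theta)$ directly by differentiating the integral expression for $m(\theta)=\gamma_2(C_\theta^+)$ with respect to the geometric parameter, using the geometry described in Figure~\ref{mainFigure}. First I would fix a convenient way to slice the cone $C_\theta^+$: since the apex of $C_\theta$ sits at $Y=(y,0)$ on the $x$-axis and the cone is symmetric about that axis, the upper half $C_\theta^+$ is bounded below by the $x$-axis and above by the ray $\overrightarrow{YP}$, which makes angle $\theta$ with the $x$-axis. I would integrate in the direction perpendicular to the upper bounding ray, or, more simply, write $m(\theta)$ as an integral over a variable parametrizing position along the bounding ray $\overrightarrow{YP}$; the quantities $h_y=ys$ and $w_y$ are precisely the coordinates (in a rotated frame) of the foot $P=(-w,h)$ of that ray on the line $\Delta_p$, so they will appear naturally. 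The chain rule then splits $m'(\theta)$ into a boundary term coming from the rotation of the ray $\overrightarrow{YP}$ about $Y$ (the "sweep" of area) and a term coming from the motion of the apex $Y=Y_\theta$ along the $x$-axis.

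The key computational step is to recognize that the leading factor $\PP(N\le yc)=\Phi(u)$ is exactly the Gaussian mass of the half-line in the direction along $\overrightarrow{YP}$ lying on the correct side of the apex — i.e., integrating out the coordinate $u=yc$ along the ray direction — while the factor $\frac{e^{-h_y^2/2}}{\sqrt{2\pi}}=\phi(h_y)$ is the one-dimensional Gaussian density in the perpendicular direction evaluated at the bounding ray, and $(\lambda_\theta-w_y)$ is the signed first moment: as the ray rotates by $d\theta$, the infinitesimal wedge swept out contributes area weighted by the distance from $Y$, whose Gaussian-weighted average (conditioned on being on the apex side) is $\lambda_\theta=\E(N\mid N>-u)=\frac{\phi(u)}{\Phi(u)}$, minus the offset $w_y$ of the apex from $\Delta_p$ in the rotated frame. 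I would carry out the differentiation carefully, keeping track of the two moving quantities $u_\theta=yc$ and $h_y=ys$ (recall $y=y_\theta=\frac{h}{\tan\theta}-w$), and check that the apex-motion term and part of the sweep term combine, with the $\Phi'(u)=\phi(u)$ contributions cancelling against the $\lambda_\theta$ piece in exactly the way that produces the clean factorization in~\eqref{eq:mpr}.

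The main obstacle I expect is bookkeeping rather than conceptual: there are several $\theta$-dependent quantities ($y$, $c$, $s$, $u$, $h_y$, $w_y$) tied together by trigonometric identities, and a naive differentiation produces many terms that must be shown to cancel or recombine. The cleanest route is probably to change variables so that $C_\theta^+$ becomes, say, the region $\{(r,\rho): 0\le \rho\le r\tan\theta,\ r\ge 0\}$ translated by the apex, or to rotate coordinates by $\theta$ so the upper edge becomes horizontal; then $m(\theta)$ is a two-dimensional Gaussian integral over a fixed-shape region whose position depends on $\theta$ only through the (rotated) apex coordinates, reducing $m'(\theta)$ to a one-dimensional boundary integral via the divergence theorem / Leibniz rule. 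Once the boundary integral is set up, evaluating it gives $\phi(h_y)\int_{-u}^\infty (t-w_y)\,d\gamma_1(t) = \phi(h_y)\big(\phi(u) - w_y\Phi(u)\big) = \Phi(u)\,\phi(h_y)(\lambda_\theta - w_y)$, which is~\eqref{eq:mpr}; here I use $\int_{-u}^\infty t\,d\gamma_1(t)=\phi(u)$ and $\lambda_\theta=\phi(u)/\Phi(u)$. The final assertion about the sign of $m'(\theta)$ is then immediate since $\Phi(u)>0$ and $\phi(h_y)>0$.
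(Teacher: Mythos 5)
Your proposal is correct in its key claim and takes a genuinely different route from the paper's proof. The paper computes $m'(\theta)$ as a symmetric difference quotient $\lim_{\beta\to 0}\frac{m(\theta+\beta)-m(\theta-\beta)}{2\beta}$, then decomposes the set-difference $C_{\theta+\beta}^+ \triangle C_{\theta-\beta}^+$ explicitly into two triangles ($A$, $B$, built on $X$, $Y$, $Z$ and $P$) and two thin cones ($C$, $D$, with apex at $P$), and evaluates each piece's Gaussian measure by hand, simplifying via the identities $h_y^2+w_y^2=h^2+w^2$ and $h_y^2+(yc)^2=y^2$. Your proposal instead reads $m'(\theta)$ off directly as a single boundary integral over the moving edge $\overrightarrow{YP}$, in a frame rotated by $\theta$, which is conceptually slicker: rotation-invariance of $\gamma_2$ puts the Gaussian density on the edge in separated form $\phi(h_y)\,\phi(w_y+x)$, and one integral does the job. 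The crucial geometric input, which your description slightly obscures, is that the effective pivot of the moving edge is $P$, \emph{not} the apex $Y$ (it is $P$ that is constrained to stay on the ray), so the sweep velocity at a point of the edge is proportional to its signed distance from $P$, i.e.\ to $t-w_y$ in your rotated variable; this is exactly what makes the factor $(\lambda_\theta - w_y)$ appear. Your phrasing ``weighted by the distance from $Y$'' is therefore misleading, though your corrective ``minus the offset $w_y$'' and the final integrand $(t-w_y)$ do land on the right quantity. The final evaluation $\phi(h_y)\int_{-u}^\infty (t-w_y)\,d\gamma_1(t)=\Phi(u)\,\phi(h_y)\,(\lambda_\theta - w_y)$ is correct, matches~\eqref{eq:mpr}, and the sign conclusion follows as you say. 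The trade-off: the paper's argument is fully elementary (only explicit measures of triangles and cones, no Reynolds/Leibniz machinery or orientation bookkeeping for a moving boundary) at the cost of more algebra, while yours is shorter once the pivot-at-$P$ observation and the rotated-frame setup with the correct sign conventions are nailed down. To turn your sketch into a full proof you would need to make the Leibniz-rule step rigorous on an unbounded region, identify $P$ explicitly as the instantaneous pivot, and check the sign of the outward normal velocity; none of these are serious obstacles, but they are the places where your outline currently hand-waves.
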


\begin{proof}
Denote $y'=||PY||=\frac{h}{\sin(\theta)}$. For $\beta>0$ a small parameter, denote $\theta_x:=\theta+\beta$ and $\theta_z:=\theta-\beta$. Denote $-w<x<y<z$ the corresponding coordinates, i.e. $\tan(\theta_x)=\frac{h}{w+x}$, $\tan(\theta_z)=\frac{h}{w+z}$. Denote $A$ the triangle $PXY$, $B$ the triangle $PYZ$, and $C$ and $D$ the (infinite) cones with apex $P$, angle $\beta$, and base $PY$ opening to the left, as on Figure \ref{mainFigure}.
Denote $\sigma_y$ the axial symmetry with axis $(PY)$, so that $D=\sigma_y(C)$. Then

$$m'(\theta)=\lim_{\beta\to 0} \frac{m(\theta+\beta)-m(\theta-\beta)}{2\beta}=\lim_{\beta\to 0} \frac{\gamma(C_x)-\gamma(C_z)}{2\beta}= 2\lim_{\beta\to 0} \frac{\gamma(C)+\gamma(D)-(\gamma(A)+\gamma(B))}{2\beta}.$$

Let $\Delta'$ be the perpendicular to $(PY)$ passing through $Y$. Then $\Delta'$ divides the triangle $B$ into two pieces, we denote $B'$ the one containing $P$ (so that, $Y$ being fixed, $B\setminus B'$ has area of order $\beta^2$). Let $A'=\sigma_y(B')$ be the triangle symmetric to $B'$ (it contains $A$). Denote $\alpha=\tan(\beta)$.  Then
\begin{align*}
\gamma(C)+\gamma(D) &= \frac{1}{2\pi}\int_0^{+\infty} e^{-(x+w_y)^2/2} \int_{-\alpha x}^{\alpha x} e^{-(h_y+ t)^2/2} \,dt\,dx \\
&= 2\alpha e^{-h_y^2/2}   \int_0^{+\infty} x e^{-(x+w_y)^2/2}\,\frac{dx}{2\pi} +O(\alpha^2) \\
&= 2\alpha\left( \frac{e^{-(h^2+w^2)/2} }{2\pi} - w_y \frac{e^{-h_y^2/2}}{\sqrt{2\pi}} \PP(N>w_y)\right) + O(\alpha^2).
\end{align*}
where we have used that $h_y^2 + w_y^2 = h^2 + w^2$.

On the other hand, $\gamma(A)+\gamma(B)=\gamma(A')+\gamma(B')+(\gamma(B\setminus B')-\gamma(A'\setminus A))=\gamma(A')+\gamma(B')+O(\beta^2),$ 
and (recall $y'=||PY||=yc+w_y$):
\begin{align*}
\gamma(A')+\gamma(B')= \alpha \int_0^{y'} e^{-(x-w_y)^2/2} \int_{- x}^{x} e^{-(h_y+\alpha t)^2/2} \,dt\,\frac{dx}{2\pi} = 2\alpha e^{-h_y^2/2} \int_0^{y'} xe^{-(x-w_y)^2/2}\,\frac{dx}{2\pi} + O(\alpha^2)
\end{align*}

Observe that
\begin{align*}\int_0^{y'} xe^{-(x-w_y)^2/2} \frac{dx}{2\pi} &=\frac{e^{-w_y^2/2}}{2\pi}+w_y \frac{\PP(N\leq w_y)}{\sqrt{2\pi}} -\int_{y'}^{+\infty} xe^{-(x-w_y)^2/2} \\
&=\frac{e^{-w_y^2/2}}{2\pi}-\frac{e^{-(yc)^2/2}}{2\pi}+w_y \frac{\PP(N\leq w_y)-\PP(N\geq yc)}{\sqrt{2\pi}}.
\end{align*}

Hence, subtracting and simplifying (again using $h_y^2 + w_y^2 = h^2 + w^2$, as well as $h_y^2 + (yc)^2 = y^2$), one gets
$$\lim_{\beta \to 0}\frac{\gamma(C)+\gamma(D)-\gamma(A)-\gamma(B)}{2\beta}=  \frac{ e^{-y^2/2}}{2\pi} - w_y e^{-h_y^2/2} \frac{\PP(N\geq -yc)}{\sqrt{2\pi}}.$$
Factoring out $e^{-h_y^2/2}=e^{-(ys)^2/2}$ and $\PP(N\leq yc)=\PP(N\geq -yc)$, one finds
$$m'(\theta)=\PP(N\leq yc) \frac{e^{-h_y^2/2}}{\sqrt{2\pi}} \left(\E(N | N\geq -yc) - w_y \right) =:\PP(N\leq yc) \frac{e^{-h_y^2/2}}{\sqrt{2\pi}} ( \lambda_{\theta} - w_y).$$
\end{proof}

We remark that this derivative formula holds for any $p\in (0,1)$ (even $p > \frac{\pi}{2}$) and any $y>-w=\phi^{-1}(p)$ (i.e., for any $\theta\in (0,\frac{\pi}{2})$). See Figures \ref{fig:largeplargetheta} and \ref{fig:largepsmalltheta} for the geometric interpretation of the variable $w_y$ when $p > \frac{1}{2}$.
 
\begin{figure}[ht]
\centering
\includegraphics[width=0.67\textwidth]{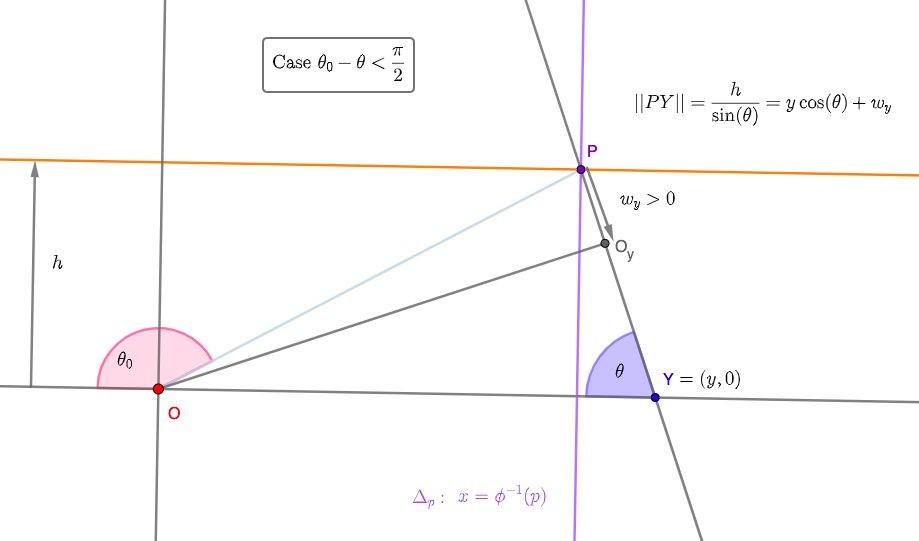} 
\caption{Geometric interpretation of $w_y$ when $p > \frac{1}{2}$ and $\theta$ is large} \label{fig:largeplargetheta}
\end{figure}

\begin{figure}[ht]
\centering
\includegraphics[width=0.67 \textwidth]{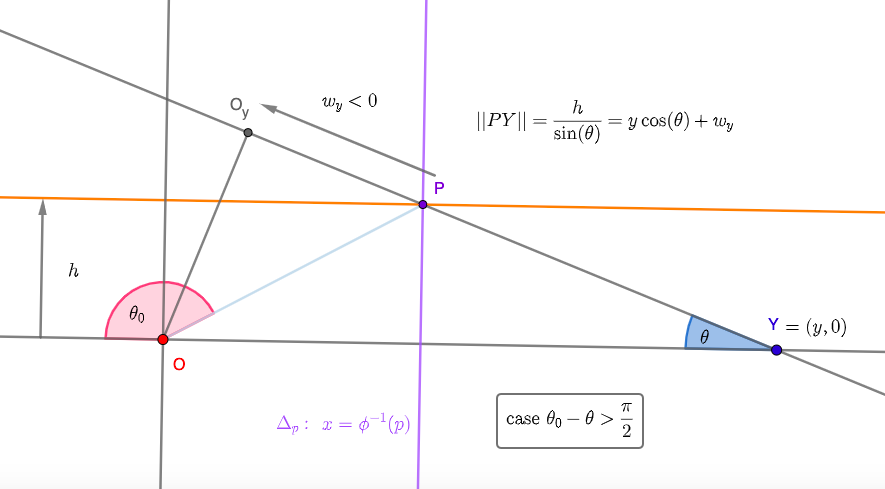} 
\caption{Geometric interpretation of $w_y$ when $p > \frac{1}{2}$ and $\theta$ is small} \label{fig:largepsmalltheta}
\end{figure}

Using \eqref{eq:mpr}, one easily sees $m'(\theta) \to -\frac{w e^{-h^2/2}}{\sqrt{2\pi}}$, which for $p < \frac{1}{2}$ yields that $m(\theta)$ is decreasing near $0$. (This actually holds true for $p = \frac{1}{2}$ as well -- indeed, in that case $m'(0) = 0$ but $m''(0) < 0$. However, for $p > \frac{1}{2}$, $m(\theta)$ is increasing near $0$.) When $\theta\to \frac{\pi}{2}$, $m'(\theta)\to \frac{e^{-w^2/2}}{\sqrt{8\pi}} \left(\sqrt{\frac{\pi}{2}} - h\right) > 0$ (since $p\leq \frac{1}{2} < \psi\left(\sqrt{\frac{\pi}{2}} \right)$). Hence $m$ is increasing near $\frac{\pi}{2}$. Therefore $m$ has at least one local minimum, which is the global minimum of $m$ on $(0,\frac{\pi}{2})$. The next two lemmas imply that $m$ has no other critical point, and in particular, no local maximum, so that $\max_{[0,\frac{\pi}{2}]}m=\frac{p}{2}$ is only attained at $0$ and at $\frac{\pi}{2}$, as needed.

Recall that at $\theta = \theta_0$, $C(\theta_0)$ has apex at the origin. The next lemma claims that the function $(\theta \mapsto \gamma_2(C_{\theta}^+))$ is convex for $\theta > \theta_0$ (corresponding to $y\leq 0$).
\begin{lem}
\label{nicelemma}
The function $m$ is convex on $[\theta_0, \frac{\pi}{2})$.
\end{lem}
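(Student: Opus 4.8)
The plan is to prove $m''(\theta)\ge 0$ on $[\theta_0,\tfrac\pi2)$ directly, by differentiating the formula of Lemma \ref{mprimesign} once more. First I would put $m'$ into a shape convenient for differentiation: since $\lambda_\theta=\phi(u)/\Phi(u)$ (so $\Phi(u)\lambda_\theta=\phi(u)$) and $h_y^2+u^2=y^2$, Lemma \ref{mprimesign} reads
$$m'(\theta)=\phi(h_y)\big(\phi(u)-w_y\,\Phi(u)\big),$$
with $\phi(t)=e^{-t^2/2}/\sqrt{2\pi}$ and $\Phi(t)=\gamma_1((-\infty,t])$ as above. Then I would record the elementary identities, valid for all $\theta\in(0,\tfrac\pi2)$ (all derivatives taken in $\theta$): from $h_y=h\cos\theta-w\sin\theta$ and $w_y=h\sin\theta+w\cos\theta$ one gets $h_y'=-w_y$ and $w_y'=h_y$; and from $u+w_y=h/\sin\theta$ (merely the definition $w_y=h/s-yc$ rewritten) one gets $u'=-h_y-\tfrac{h\cos\theta}{\sin^2\theta}$.

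Differentiating $m'$ with the help of $\phi'(t)=-t\phi(t)$ and $u+w_y=h/s$, one obtains after simplification
$$m''(\theta)=\phi(h_y)\left\{\phi(u)\,\frac{h^2\cos\theta}{\sin^3\theta}\;-\;h_y\Big[(w_y^2+1)\Phi(u)-(u+2w_y)\phi(u)\Big]\right\}.$$
On $[\theta_0,\tfrac\pi2)$ we have $h_y=h\cos\theta-w\sin\theta\le 0$ (because $\tan\theta\ge h/w=\tan\theta_0$) and $\cos\theta\ge 0$, so $\phi(u)\tfrac{h^2\cos\theta}{\sin^3\theta}\ge 0$ and $-h_y\ge 0$; since $\phi(h_y)>0$, it only remains to show that the bracket $(w_y^2+1)\Phi(u)-(u+2w_y)\phi(u)$ is nonnegative. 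Viewing it as a quadratic in $w_y$ and completing the square, it equals
$$\Phi(u)\left(w_y-\frac{\phi(u)}{\Phi(u)}\right)^2+\frac{\Phi(u)^2-\phi(u)^2-u\phi(u)\Phi(u)}{\Phi(u)},$$
so the whole matter collapses to the single scalar inequality $g(u):=\Phi(u)^2-\phi(u)^2-u\phi(u)\Phi(u)\ge 0$ for all $u\in\R$.

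To finish I would verify $g\ge 0$: a short computation gives $g'(u)=\phi(u)\big[(1+u^2)\Phi(u)+u\phi(u)\big]$, which is positive for every $u$ — trivially for $u\ge 0$, and for $u<0$ because the classical Mills-ratio bound $\Phi(u)>\tfrac{-u}{1+u^2}\phi(u)$ gives $(1+u^2)\Phi(u)>-u\phi(u)$. Hence $g$ is strictly increasing; since $\lim_{u\to-\infty}g(u)=0$ (all three terms tend to $0$) and a strictly increasing function lies strictly above its infimum at every point, we get $g>0$ on $\R$, hence $m''\ge 0$ on $[\theta_0,\tfrac\pi2)$ as claimed. I expect the differentiation producing the displayed formula for $m''$ to be the most laborious part — it is mechanical but must be organized around the identities $h_y'=-w_y$, $w_y'=h_y$, $u+w_y=h/s$, $u'=-h_y-\tfrac{h\cos\theta}{\sin^2\theta}$ to stay manageable; the one genuinely non-obvious move is recognizing that completing the square in $w_y$ reduces positivity to the single-variable estimate $g\ge 0$, a disguised form of the Mills-ratio inequality. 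The restriction to $[\theta_0,\tfrac\pi2)$ is used exactly to guarantee $h_y\le 0$ and $\cos\theta\ge 0$, which are what force the two surviving pieces to have the right sign.
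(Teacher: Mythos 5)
Your proof is correct, and I checked the details: the reformulation $m'(\theta)=\phi(h_y)\bigl(\phi(u)-w_y\Phi(u)\bigr)$ is a faithful rewriting of \eqref{eq:mpr}; the identities $h_y'=-w_y$, $w_y'=h_y$, $u+w_y=h/s$ hold; the displayed expression for $m''$ follows by mechanical differentiation (I reproduced it); completing the square is algebraically right; and $g(u)=\Phi(u)^2-\phi(u)^2-u\phi(u)\Phi(u)$ is indeed positive since $g'(u)=\phi(u)\bigl[(1+u^2)\Phi(u)+u\phi(u)\bigr]>0$ (the $u<0$ case by the Birnbaum lower bound on the Gaussian tail, $1-\Phi(v)\ge\frac{v}{1+v^2}\phi(v)$) and $g\to0$ at $-\infty$. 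The sign conditions $h_y\le0$, $\cos\theta\ge0$ on $[\theta_0,\tfrac{\pi}{2})$ are exactly what they need to be.

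However, you take a genuinely different route than the paper. The paper's proof of this lemma makes no use of the derivative formula in Lemma~\ref{mprimesign} at all: it is a pure reflection argument. Writing $m(\theta_x)+m(\theta_z)-2m(\theta_y)=\gamma(B)-\gamma(A)+\gamma(D)-\gamma(C)$ for two thin triangles $A,B$ and two thin cones $C,D$ based at $P$, the paper uses the axial symmetry $\sigma_y$ across the line $PY$ to pair $C$ with $D$ and $A$ with a subset $B'=\sigma_y(A)\subsetneq B$, and then observes that reflecting a set across $PY$ towards the origin increases its Gaussian measure (because the origin lies strictly on one side of $PY$ when $\theta>\theta_0$). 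This gives strict midpoint convexity directly, with essentially no computation. What you buy with your computational approach is an explicit formula for $m''$ and a reduction to a transparent one-variable inequality ($g\ge0$), which also makes the role of $h_y\le 0$ completely mechanical; what the paper's approach buys is conceptual economy — it needs neither the formula for $m'$ nor any special function inequality, and the monotonicity-under-reflection step is the same symmetry principle that underlies the Steiner symmetrization in Lemma~\ref{lem:red2}. Both give strict convexity on $[\theta_0,\tfrac{\pi}{2})$.
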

\begin{proof}
First assume $\theta_0 < \theta < \frac{\pi}{2}$, and let $0>y>-w$ be such that $\theta=\theta_y$. Let $0<\beta<\min\{\theta, \frac{\pi}{2}-\theta\}$. Let $z>y>x>-w$ so that $\theta_x=\theta +\beta$ and $\theta_z=\theta-\beta$. We aim at showing that $m(\theta_x)+m(\theta_z)>2m(\theta_y)$, i.e that $\gamma(C_x^+)+\gamma(C_z^+)>2\gamma(C_y^+)$. Denote $A, B, C, D$ the two triangles and two cones as in Figure \ref{fig:convex}: $A$ is the triangle $XYP$, $B$ is the triangle $YZP$, while $C$ is the cone of angle $\beta$ (not containing $B$) bordered by $(PY)$ and $(PZ)$, and  $D$ is the cone of angle $\beta$ (not containing $A$) bordered by $(PY)$ and $(PX)$.
Denote by $\sigma_y$ the symmetry with axis $(PY)$, so that $D=\sigma_y(C)$, and let $B':=\sigma_y(A) \subsetneq B$. We have 
$$\gamma(C_x^+)+\gamma(C_z^+)-2\gamma(C_y^+)=\gamma(B)-\gamma(A)+\gamma(D)-\gamma(C).$$

Since $D=\sigma_y(C)$ and $C$ is on the side of $(PY)$ further from the origin, we have $\gamma(D)>\gamma(C)$. Similarly, since $B'=\sigma_y(A)$, and $B'$ is on the side of $(PY)$ closer to the origin, we have $\gamma(B')>\gamma(A)$. Also, $\gamma(B)>\gamma(B')$ since $B$ (strictly) contains $B'$. It follows that $m(\theta_x)+m(\theta_z)>2m(\theta_y)$, i.e. that $m$ is (strictly) convex at $\theta$.

\begin{figure}
\centering
\includegraphics[width=0.67 \textwidth]{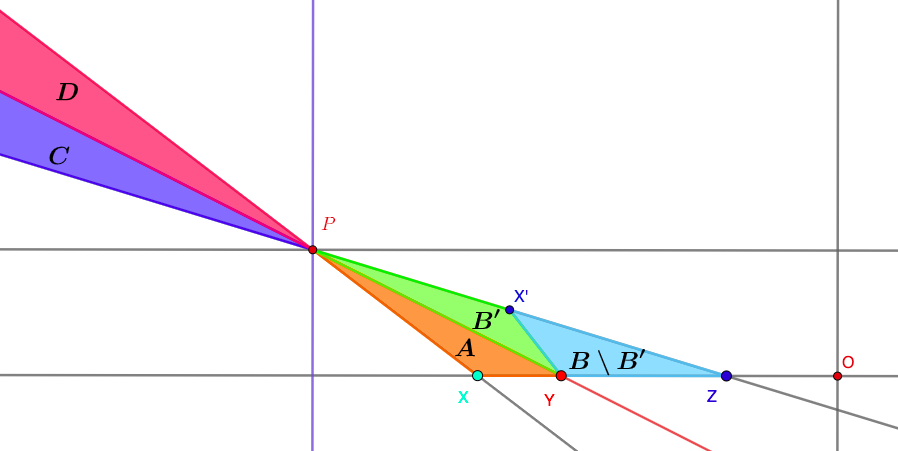} 
\caption{Notations for the proof of Lemma \ref{nicelemma}}
\label{fig:convex}
\end{figure}

In the case $\theta = \theta_0$, the drawing is the same; however, in this case we have $Y = 0$, so $C$ and $D$, as well as $A$ and $B'$ are now symmetric with respect to a line passing through the origin and hence have the same measure. However, $B'$ is still strictly contained in $B$ and so we have
$$m(\theta_x) + m(\theta_z) - 2m(\theta_0) = (\gamma(C) - \gamma(D)) + (\gamma(B') - \gamma(A)) + \gamma(B \backslash B')= \gamma(B \backslash B')> 0.$$
\end{proof}

\begin{lem}
\label{convexaupointcritique}
Assume $0<\theta< \theta_0$ and $m'(\theta)=0$. Then $m''(\theta)>0$.
\end{lem}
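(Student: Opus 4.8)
The plan is to differentiate the formula of Lemma~\ref{mprimesign} once more. Write $m'(\theta)=\Phi(yc)\,\phi(h_y)\,(\lambda_\theta-w_y)$, where $\phi(t)=\frac{1}{\sqrt{2\pi}}e^{-t^2/2}$; the prefactor $\Phi(yc)\phi(h_y)$ is strictly positive. Hence at a point $\theta\in(0,\theta_0)$ with $m'(\theta)=0$ one has $\lambda_\theta=w_y$, and when the product $\big(\Phi(yc)\phi(h_y)\big)(\lambda_\theta-w_y)$ is differentiated at such a point the contribution of the prefactor is annihilated by the factor $\lambda_\theta-w_y=0$, so $m''(\theta)=\Phi(yc)\phi(h_y)\,(\lambda_\theta-w_y)'$. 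It therefore suffices to prove $\lambda_\theta'>w_y'$ at the critical point.

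Both derivatives are immediate. From $w_y=hs+wc$ and $h_y=hc-ws$ one gets $w_y'=hc-ws=h_y$. Since $\lambda_\theta=\phi(u)/\Phi(u)$ with $u=yc$, the identity $\frac{d}{dx}\frac{\phi(x)}{\Phi(x)}=-\frac{\phi(x)}{\Phi(x)}\big(x+\frac{\phi(x)}{\Phi(x)}\big)$ gives $\lambda_\theta'=-\lambda_\theta(u+\lambda_\theta)u'$; plugging in $u'=-\frac{hc}{s^2}-h_y$, then using $\lambda_\theta=w_y$ and the identity $u+w_y=\frac{h}{s}$, this becomes $\lambda_\theta'=w_y\,\frac{h}{s}\big(\frac{hc}{s^2}+h_y\big)$. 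So the goal is
$$w_y\,\frac{h}{s}\Big(\frac{hc}{s^2}+h_y\Big)>h_y .$$
Clearing denominators, substituting $h_y=\frac{h-w_y s}{c}$, and using $c^2+s^2=1$, this simplifies to the clean inequality
$$w_y\,(h^2+s^4)>h s^3\,(1+w_y^2),\qquad\text{equivalently}\qquad w_y+\frac1{w_y}<\frac{h}{s^3}+\frac{s}{h}.$$

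Proving this is the real content. Two remarks locate it: since $\theta<\theta_0$ one always has $w_y=hs+wc<h/s$, while at a critical point $\lambda_\theta=w_y=\phi(u)/\Phi(u)$ forces $w_y\cdot\frac{h}{s}=w_y(u+w_y)=1-\operatorname{Var}(N\mid N>-u)\in(0,1)$, so in fact $w_y<s/h$ as well. Thus $w_y$ is pinned strictly below $\min(s/h,h/s)$; since the displayed inequality is false once $w_y$ is too small, the critical-point relation has to be used essentially, not just $w_y<h/s$. The way to exploit it: parametrize the critical locus by $u\in(0,\infty)$ (equivalently by $\theta\in(0,\theta_0)$), so $w_y=\phi(u)/\Phi(u)$ and $\frac{h}{s}=u+w_y$; the defining link between $w$ and $h$, namely $\Phi(w)+2\Phi(h)=2$, combined with $w=(w_y-hs)/\sqrt{1-s^2}$, then determines $s$ — hence $h=s(u+w_y)$ — as a function of $u$. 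Substituting reduces the target to a one-variable inequality $F(u)>0$ on $(0,\infty)$, to be verified from its behaviour as $\theta\to\theta_0$ (where the slack is of order $1-s^2>0$) and as $\theta\to0$, together with a monotonicity or convexity argument for the interior — or, since the tight regime is $p\to\frac12$, by first disposing of a range $p\le p_0$ (on which $h$ stays away from $\Psi^{-1}(1/2)$, giving ample room on the right-hand side) and then handling the remaining range $p_0<p<\frac12$.

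The main obstacle is precisely this last inequality. Because the elementary reformulation $w_y(h^2+s^4)>h s^3(1+w_y^2)$ holds nowhere off the critical locus, the proof must genuinely feed in both transcendental constraints — the fixed-point equation $w_y=\phi(u)/\Phi(u)$ and the relation $\Phi(w)+2\Phi(h)=2$ — and then control the Mills-type ratio $\phi/\Phi$ and the implicit dependence $h=h(u)$ precisely enough to close a non-obvious estimate. Everything preceding it (the two reductions above) should be mechanical.
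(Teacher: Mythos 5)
Your reduction correctly follows the same initial route as the paper: at a critical point $\lambda_\theta = w_y$, the sign of $m''(\theta)$ is that of $\frac{d}{d\theta}(\lambda_\theta - w_y)$, and computing $w_y' = h_y$, $u' = -\frac{hc}{s^2} - h_y$, $\lambda_\theta' = -\lambda_\theta(u+\lambda_\theta)u'$, then substituting $\lambda_\theta = w_y$ and $u + w_y = h/s$, yields $w_y\frac{h}{s}\bigl(\frac{hc}{s^2}+h_y\bigr) > h_y$, which your identity $w_y(h^2+s^4) > hs^3(1+w_y^2)$ rewrites correctly (it is equivalent to the paper's $(h^2+wx)(x^3+2xh^2-wh^2)>h^2 y$ with $x=y+w$). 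But your argument stops there. The remaining paragraphs observe the constraints $w_y < h/s$ and $w_y(u+w_y) = 1 - \text{Var}(N\mid N>-u) \in (0,1)$, correctly note that a lower bound on $w_y$ is needed, and propose a one-variable reduction along the critical locus --- but none of the analytic work is carried out. This final inequality is the whole non-mechanical content of the lemma, and it is left unproved.

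The paper closes the gap by a different, more elementary route that avoids reparametrizing by $u$. It applies the crude bounds $h^2+wx \geq h^2+w^2$ and $x^3+2xh^2-wh^2 \geq (x-w)(x^2+2h^2)$ (both using $x>w\geq 0$), reducing the target to $(h^2+w^2)(x^2+2h^2)\geq h^2$. The critical-point condition then enters only through Claim~\ref{claimeasy}, a short Mills-ratio estimate showing $(y')^2 = x^2+h^2 > \frac{2}{\pi}$ at any critical $\theta < \theta_0$; what remains is Claim~\ref{penible}, the inequality $(h^2+w^2)(\frac{2}{\pi}+h^2)\geq h^2$, a purely numerical statement depending on the relation $\int_0^w e^{-t^2/2}\,dt + 2\int_0^h e^{-t^2/2}\,dt = \sqrt{\pi/2}$ between $h$ and $w$, settled via Cauchy--Schwarz and a three-case split on the size of $h^2$. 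To complete your proof you would either need to carry out the one-variable analysis you sketch, or substitute these two claims in its place.
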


Lemma \ref{mprimesign} shows that $m'(\theta)=g(\theta)(\lambda_{\theta}-w_y)$, for some $g$ smooth and positive on $(0,\frac{\pi}{2})$. Therefore Lemma \ref{convexaupointcritique} is equivalent to the claim that, if $m'(\theta)=0$ and $\theta<\theta_0$, then $\frac{d}{d\theta}(\lambda_{\theta}-w_y)>0$. Recall that $\lambda_{\theta}=\frac{\phi(u)}{\Phi(u)}$ for $u=u_{\theta}=yc$, and $\phi = \frac{d\gamma_n}{dx}$. In other words, to derive Lemma \ref{convexaupointcritique} it suffices to show that $\frac{d\lambda_{\theta}}{du}\frac{du}{d\theta} > \frac{d w_y}{d\theta}$, when $\lambda_{\theta}=w_y$ and $y>0$.

One easily computes that $\frac{\partial}{\partial u} \lambda_{\theta}=-u \lambda_{\theta} -\lambda_{\theta}^2=\text{Var}_{\theta}-1 \in (-1,0)$, where $\text{Var}_{\theta}$ denotes the conditional variance 
$$\text{Var}_{\theta}=\E(N^2 | N>-u) -(\E(N | N>-u) )^2=\E(N^2 | N>-u)- \lambda_{\theta}^2.$$

Denote $x:=y+w$, so that $ys = xs-ws=hc-ws$. We see from Figure \ref{mainFigure} that $w_y^2=h^2+w^2-(ys)^2=h^2+w^2-(hc-ws)^2=(hs+wc)^2$, and hence $w_y=hs+wc$. Recalling that $c = \cos\theta, s = \sin\theta$, this yields $\frac{dw_y}{d\theta} = hc-ws=ys$. Now $u=yc=y'-w_y = \frac{h}{s}-w_y$ and so $\frac{du}{d \theta}=\frac{dy'}{d\theta}-\frac{dw_y}{d \theta}=-\frac{y'}{t}-ys$. Altogether, one finds that, if $m'(\theta)=0$, i.e. if $\lambda_{\theta}=w_y$, then
$$m''(\theta)>0 \Leftrightarrow \frac{d\lambda_{\theta}}{du}\frac{du}{d\theta} > \frac{dw_y}{d\theta} \Leftrightarrow 
 \left(\frac{y'}{t}+ys \right)(u\lambda_{\theta}+\lambda_{\theta}^2) > ys
\Leftrightarrow 
 \left(\frac{y'}{t}+ys \right)(u w_y+w_y^2) > ys.$$

 We rewrite this inequality in terms of $h$, $w$, and $x = y + w$ as follows:
 \begin{equation}\label{eq:lem5_ineq}
     (h^2+wx) (x^3+2xh^2-wh^2) > h^2 y.
 \end{equation} 
 Indeed, 
 $$\frac{ys}{ys+\frac{y'}{t}}=\frac{h^2y}{h^2y+\frac{x^3}{c^2}},$$
 and 
 $$h^2y+\frac{x^3}{c^2}=h^2y+x^3(1+t^2)=h^2y+h^2x+x^3=x^3+2xh^2-wh^2,$$ 
 while
 $$u w_y+w_y^2=(u+w_y) w_y=y' w_y=\frac{h}{s}(hs+wc)=h^2+w \frac{h}{t}=h^2+wx$$

Recapitulating, it only remains to show \eqref{eq:lem5_ineq}: that is, for any $p\leq \frac{1}{2}$, $(h,w):=(\Psi^{-1}(p),-\Phi^{-1}(p))$ and any $x > w$ (as $x = y + w$ and $y > 0$ because $\theta < \theta_0$), one has that
$$(h^2+wx) (x^3+2xh^2-wh^2) > h^2(x-w).$$

Since $x > w\geq 0$ and since $x^3+2xh^2-wh^2\geq  x^3+2xh^2-2wh^2\geq (x-w)(x^2+2h^2)$, it suffices to show that $(h^2+w^2) (x^2+2h^2)  \geq h^2$. We prove this in two steps (Claims \ref{claimeasy} and \ref{penible}):

\begin{claim}
\label{claimeasy}
Assume $y>0$ is such that $m'(\theta)=0$, with $\theta=\theta_y$. Then $(w+y)^2+h^2=(y')^2> \frac{2}{\pi}$.
\end{claim}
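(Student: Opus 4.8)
The plan is to reduce the bound $(y')^2 > \tfrac{2}{\pi}$ to the monotonicity of an explicit one-variable function, exploiting that at a critical point $m'(\theta)=0$ the length $y'=\|PY\|$ has a clean closed form. Recall the general identities $y' = \tfrac{h}{s} = u + w_y$ with $u = yc$, and $(y')^2 = \|PY\|^2 = (w+y)^2 + h^2$, so it suffices to prove $y' > \sqrt{2/\pi}$.

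First I would use Lemma \ref{mprimesign}: the hypothesis $m'(\theta)=0$ is equivalent to $\lambda_\theta = w_y$. Substituting this into $y' = u + w_y$ and using $\lambda_\theta = \tfrac{\phi(u)}{\Phi(u)}$ gives
\[
y' \;=\; u + \frac{\phi(u)}{\Phi(u)} \;=:\; G(u),
\]
where $u = yc > 0$ since $y>0$ and $\theta\in(0,\tfrac{\pi}{2})$.

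Next I would show $G$ is strictly increasing on $[0,\infty)$. Differentiating and invoking the identity $\tfrac{\partial}{\partial u}\lambda_\theta = \mathrm{Var}_\theta - 1$ already established above (with $\mathrm{Var}_\theta = \mathrm{Var}(N\mid N>-u)\in(0,1)$), one gets $G'(u) = 1 + \tfrac{\partial}{\partial u}\lambda_\theta = \mathrm{Var}_\theta > 0$. Equivalently one may differentiate $\tfrac{\phi}{\Phi}$ directly via $\phi'(u)=-u\phi(u)$, obtaining $G'(u) = 1 - \lambda_\theta(u+\lambda_\theta)$, which is positive by the standard estimate $\tfrac{\phi(u)}{\Phi(u)} < \tfrac{-u+\sqrt{u^2+4}}{2}$; but the conditional-variance route is cleanest.

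Finally I would evaluate at $0$: $G(0) = \tfrac{\phi(0)}{\Phi(0)} = \tfrac{1/\sqrt{2\pi}}{1/2} = \sqrt{\tfrac{2}{\pi}}$. Since $u>0$ and $G$ is strictly increasing, $y' = G(u) > G(0) = \sqrt{2/\pi}$, whence $(w+y)^2 + h^2 = (y')^2 > \tfrac{2}{\pi}$, as claimed. I do not expect a genuine obstacle here; the only point requiring care is the strict inequality $\mathrm{Var}(N\mid N>-u) > 0$, which is immediate because a truncated Gaussian is non-degenerate, together with bookkeeping of the already-introduced notation.
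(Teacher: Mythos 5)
Your proof is correct, and it takes a genuinely different route from the paper's. The paper argues by contraposition with explicit elementary estimates: assuming $(y')^2\leq \tfrac{2}{\pi}$, it bounds $w_y\leq \sqrt{2/\pi}-u$ and $\int_{-\infty}^u e^{-t^2/2}\,dt<\sqrt{\pi/2}+u$, multiplies these, and uses $(1-u^2)e^{u^2/2}<1$ to conclude $w_y<\lambda_\theta$, so no critical point can occur there. You instead solve the critical-point equation $\lambda_\theta=w_y$ directly into the closed form $y'=u+\tfrac{\phi(u)}{\Phi(u)}=G(u)$ and reduce the claim to strict monotonicity of $G$, which follows at once from the identity $\tfrac{\partial}{\partial u}\lambda_\theta=-u\lambda_\theta-\lambda_\theta^2=\mathrm{Var}_\theta-1\in(-1,0)$ that the paper has already recorded (so $G'=\mathrm{Var}_\theta>0$), together with the evaluation $G(0)=\sqrt{2/\pi}$ and $u=yc>0$. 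Your identities $y'=h/s=u+w_y$ and $(y')^2=(w+y)^2+h^2$ are all established in the surrounding text, so nothing is missing. What each approach buys: the paper's argument is self-contained at the level of bare inequalities for the Gaussian integral and needs no derivative of the inverse Mills ratio beyond what is quoted; yours is shorter, reuses the variance identity already needed for Lemma \ref{convexaupointcritique}, and in addition shows that the constant $\tfrac{2}{\pi}$ is exactly the infimum of $(y')^2$ over critical configurations (attained only in the limit $u\to 0^+$), i.e. the bound is sharp in that sense. The only point to keep strict is $\mathrm{Var}_\theta>0$, which, as you note, is immediate for a non-degenerate truncated Gaussian (and is anyway asserted in the paper via $\tfrac{\partial}{\partial u}\lambda_\theta\in(-1,0)$).
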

\begin{proof}
Let $y>0$ be such that $(y')^2\leq  \frac{2}{\pi}$. Then $w_y=y'-yc=y'-u \leq \sqrt{\frac{2}{\pi}}-u$, while $\phi(u)=\int_{-\infty}^u e^{-t^2/2} dt=\sqrt{\frac{\pi}{2}}+\int_0^u e^{-t^2/2}dt <\sqrt{\frac{\pi}{2}}+u$ (where $u=yc>0$). Therefore
$$\frac{w_y}{\lambda_{\theta}}=w_y \phi(u) e^{u^2/2}<\left( \sqrt{\frac{2}{\pi}}-u\right)\left(\sqrt{\frac{\pi}{2}}+u\right) e^{u^2/2}<(1-u^2) e^{u^2/2} <1.$$
Hence $(y')^2\leq  \frac{2}{\pi}$ cannot occur at some $y>0$ corresponding to a critical point of $m$.
\end{proof}
Hence we only need to show that $(h^2+w^2) (\frac{2}{\pi}+h^2)  \geq h^2$. Therefore the next claim concludes the proof of lemma
 \ref{convexaupointcritique}.
\begin{claim}
\label{penible}
Let $p\leq \frac{1}{2}$. Denote $h=\psi^{-1}(p)$ and $w=\phi^{-1}(1-p)$. Then $(h^2+w^2) (\frac{2}{\pi}+h^2)   \geq h^2$.
\end{claim}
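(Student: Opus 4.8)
The plan is to reduce the two-variable inequality $(h^2+w^2)(\frac{2}{\pi}+h^2)\geq h^2$ to a one-variable statement by exploiting the relation between $h=\psi^{-1}(p)$ and $w=\phi^{-1}(1-p)$ imposed by the common value $p\in(0,\tfrac12]$. Concretely, since $\psi(h)=\phi(w)-\phi(-w)=1-2\phi(-w)=2\phi(w)-1$ and $\phi(w)=1-p$, we have $\psi(h)=1-2p$, i.e. $\int_{-h}^h\varphi = 1-2p$ and $\int_w^\infty\varphi = p$, where $\varphi$ is the standard Gaussian density. Both $h$ and $w$ are therefore determined by $p$, and as $p$ ranges over $(0,\tfrac12]$, the point $(h,w)$ traces a curve; at $p=\tfrac12$ one has $h=w=0$ and the inequality degenerates to $0\geq 0$, while as $p\to 0$ both $h,w\to\infty$ and the left side grows like $h^4\to\infty$, so the inequality is comfortable at the endpoints. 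The first step is to make this precise: rewrite the desired inequality as $\frac{2}{\pi}+h^2\geq \frac{h^2}{h^2+w^2}$, and note the right-hand side is always $<1$, so it suffices to prove $\frac{2}{\pi}+h^2\geq \frac{h^2}{h^2+w^2}$, which certainly holds once $h^2\geq 1-\frac{2}{\pi}$; hence we may restrict to the regime of small $h$ (equivalently $p$ close to $\tfrac12$).

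The second step handles this small-$h$ regime by comparing $h$ and $w$ directly. When $p$ is close to $\tfrac12$, a Taylor expansion gives $h\approx \sqrt{2\pi}\,(\tfrac12-p)$ and $w\approx \sqrt{2\pi}\,(\tfrac12-p)$ to leading order, so $w/h\to 1$; more usefully, I would show that $w\geq c\cdot h$ for some explicit constant $c>0$ valid on all of $(0,\tfrac12]$ — indeed $\int_w^\infty\varphi = \tfrac12\int_{-h}^h\varphi$ forces $w$ and $h$ to be comparable, since $\varphi$ is bounded below by $\varphi(h)$ on $[0,h]$ and above by $\varphi(w)$ on $(w,\infty)$ up to the tail. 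From a bound $w\geq c\,h$ we get $\frac{h^2}{h^2+w^2}\leq \frac{1}{1+c^2}$, and then it suffices to check $\frac{2}{\pi}\geq \frac{1}{1+c^2}$, i.e. $c^2\geq \frac{\pi}{2}-1$; if the crude constant $c$ is not quite large enough, one interpolates with the bound $h^2\geq 1-\frac2\pi$ from the previous paragraph, using that the two regimes overlap.

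The main obstacle I anticipate is getting a clean, rigorous comparison between $h$ and $w$ that is uniform in $p$: the relations defining them involve incomplete Gaussian integrals, so one needs either sharp enough elementary bounds on $\Phi$ and $\psi$ (e.g. $\psi(h)\leq \frac{2h}{\sqrt{2\pi}}$ and $\phi(w)-\tfrac12\geq$ something like $\frac{w}{\sqrt{2\pi}}e^{-w^2/2}$), or a monotonicity/convexity argument showing that the single-variable function $p\mapsto (h(p)^2+w(p)^2)(\frac{2}{\pi}+h(p)^2)-h(p)^2$ is nonnegative, perhaps by showing it is monotone in $p$ and vanishes only at $p=\tfrac12$. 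I would first attempt the direct route: establish $w\geq h/2$ (say) from $p = \int_w^\infty\varphi \leq \varphi(0)\cdot\text{(something)}$ versus $1-2p = \int_{-h}^h\varphi \leq \frac{2h}{\sqrt{2\pi}}$, combine with $h^2 \geq 1-\frac2\pi$ for the complementary range, and only if that fails fall back on the expansion-plus-monotonicity argument near $p=\tfrac12$.
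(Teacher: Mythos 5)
Your setup misreads the defining relations, and the resulting two-case scheme has a hole exactly where the inequality is tight. First, $h=\psi^{-1}(p)$ means $\psi(h)=p$, i.e. $\int_{-h}^{h}\varphi = p$, not $1-2p$ (the normalization in the paper is $\gamma_1([-h,h])=\gamma_1((-\infty,-w])=p$). So your endpoint picture is backwards: at $p=\tfrac12$ one has $w=0$ but $h=\Psi^{-1}(1/2)\approx 0.674$, so the inequality does not degenerate to $0\geq 0$ there; and as $p\to 0$ one has $h\to 0$ while $w\to\infty$. In particular $w/h\to 0$ (not $1$) as $p\to\tfrac12$, so a uniform comparison $w\geq c\,h$ with a constant $c$ of order $1$ is false precisely in the range where you invoke it.

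Second, even after correcting the relations, your decomposition does not cover the hard regime. The trivial case $h^2\geq 1-\frac{2}{\pi}$ is fine, but just below that threshold (numerically $h^2\approx 0.36$, i.e. $p\approx 0.45$) one has $w\approx 0.12$, so $w/h\approx 0.19$, whereas your second case requires $c\geq\sqrt{\pi/2-1}\approx 0.76$; the two regimes do not overlap, and the inequality has little slack there (the ratio of the two sides dips to roughly $1.04$ near $p\approx 0.45$), so no crude comparability bound such as $w\geq h/2$ can close it. This intermediate window is exactly where the paper's proof works hardest: it converts the two defining equations into the single identity $\int_0^{w}e^{-t^2/2}\,dt+2\int_0^{h}e^{-t^2/2}\,dt=\sqrt{\pi/2}$, splits into the cases $h^2\leq\frac{5}{22}$, $h^2\geq\frac{4}{11}$, and the window in between, and in that last case combines the refined bound $\int_0^{h}e^{-t^2/2}\,dt\leq h-\frac{h^3}{8}$ with the Cauchy--Schwarz inequality. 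Your outline defers precisely this quantitative step to ``sharp enough elementary bounds'' without supplying them, so as written the proposal does not prove the claim.
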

\begin{proof}
Since  $\gamma_1((-\infty,-w))=p$, we know that $\gamma_1([0,w))=\frac{1}{\sqrt{2\pi}} \int_0^w e^{-t^2/2}dt=\frac{1}{2}-p$, while $\frac{1}{\sqrt{2\pi}} \int_0^w e^{-t^2/2}dt=\gamma_1([0,h])=\frac{p}{2}$. We deduce that $\int_0^w e^{-t^2/2}dt +2 \int_0^h e^{-t^2/2}dt=\sqrt{\frac{\pi}{2}}$.

We shall use the estimate $\frac{50}{16}<\pi < \frac{22}{7}$. We have $\int_0^w e^{-t^2/2}dt +2 \int_0^h e^{-t^2/2}dt=\sqrt{\frac{\pi}{2}}> \frac{5}{4}$. Since $\frac{2}{\pi}>\frac{7}{11}$, it suffices to show that $(1+\frac{w^2}{h^2}) (\frac{7}{11}+h^2)   \geq 1$, i.e. that $h^2+w^2+\frac{7}{11} \frac{w^2}{h^2} \geq \frac{4}{11}$.
We consider three distinct cases, and show the desired inequality in each case.

\underline{Case $1$}: $h^2\leq \frac{5}{22}$, and hence $h^2+w^2+\frac{7}{11} \frac{w^2}{h^2} \geq h^2+ \frac{19}{5} w^2$.
Using Cauchy-Schwarz inequality:
$$\left(h^2+\frac{19}{5} w^2\right)\left(4+\frac{5}{19}\right) \geq (w+2h)^2 \geq \left(\int_0^w e^{-t^2/2}dt +2 \int_0^h e^{-t^2/2}dt\right)^2 > \frac{25}{16}.$$
Therefore $h^2+w^2+\frac{7}{11} \frac{w^2}{h^2} \geq h^2+ \frac{19}{5} w^2 > \frac{25}{16} \frac{19}{81} > \frac{4}{11}$, as needed.

\vspace{2mm}
\underline{Case $2$}: $h^2 \geq \frac{4}{11}$.   Then the inequality $h^2+w^2+\frac{7}{11} \frac{w^2}{h^2} \geq \frac{4}{11}$ holds.

\vspace{2mm}
\underline{Case $3$}: $\frac{5}{22} <h^2 < \frac{4}{11}$.   Then  $h^2+w^2+\frac{7}{11} \frac{w^2}{h^2} > h^2 +\frac{11}{4} w^2$.
We shall use as in case $1$ that $\int_0^w e^{-t^2/2} dt \leq w$, but we shall upper bound $\int_0^h e^{-t^2/2} dt$ more carefully.
Note that, if $z\in [0,\frac{1}{2}]$, one has $e^{-z}\leq 1- z+\frac{z^2}{2}=1-z(1-\frac{z}{2}) \leq 1-\frac{3}{4} z$. Thus
$$\int_0^h e^{-t^2/2} dt \leq \int_0^h (1-\frac{3}{8} t^2)dt=h- \frac{h^3}{8} \hspace{1mm}\text{,  therefore } \hspace{1mm}w+2h \geq \int_0^w e^{-t^2/2} dt +2  \int_0^h e^{-t^2/2} dt +\frac{h^3}{4}> \frac{5}{4}+\frac{h^3}{4}.$$
Using agin Cauchy-Schwarz inequality:
$$\left(h^2+\frac{11}{4} w^2\right)\left(4+\frac{4}{11}\right) \geq (w+2h)^2 > \left( \frac{5}{4}+\frac{h^3}{4}\right)^2.$$
Since $h^2 >\frac{5}{22}>\frac{4}{25}$, we have $h^3=h^2. h> \frac{5}{22} . \frac{2}{5}=\frac{1}{11}$.
If we had $h^2 +\frac{11}{4} w^2 \leq \frac{4}{11}$, then we would have
$$\frac{4}{11}. \frac{49}{11}>\frac{4}{11}\left(4+\frac{4}{11}\right) \geq \left(h^2+\frac{11}{4} w^2\right)\left(4+\frac{4}{11}\right) >  \left( \frac{5}{4}+\frac{h^3}{4}\right)^2$$
from which one deduces $ \frac{5}{4}+\frac{h^3}{4} <\frac{14}{11}$, i.e. $h^3< \frac{1}{11}$. Hence $h^2 +\frac{11}{4} w^2 > \frac{4}{11}$, as desired.
\end{proof}
Lemmas \ref{nicelemma} and \ref{convexaupointcritique} allow to conclude that $m$ admits a unique critical point on $(0,\frac{\pi}{2})$, which is a minimum. Hence $m(\theta) \leq \frac{p}{2}=m(0)=m(\frac{\pi}{2})$, for all $\theta\in [0,\frac{\pi}{2}]$, and the inequality is strict if $0<\theta <\frac{\pi}{2}$. Therefore Lemma \ref{lem:red2}, and hence Proposition \ref{prop:planar} are proven.

\section{Remarks and open questions}
\label{section:remarks}
\subsection{The regime  \texorpdfstring{$p > \frac{1}{2}$}{p > ½}}\label{subsec:largep}
Recall from the previous section that for a fixed $p \in (0, 1)$ and $\theta \in (0, \frac{\pi}{2})$, we define $C_{\theta}$ to be a cone symmetric with respect to the $x$-axis, opening at an angle $2\theta$ towards the left, and such that $C_{\theta} \cap \{x = \Phi^{-1}(p)\}$ is a (symmetric) vertical segment of (gaussian) measure $p$, and that we set $m(\theta) =\gamma_2(C_\theta)$, so that $m(0)=m(\frac{\pi}{2})=p$.

The computation on page $10$ showed that $m'(\theta) \to \frac{ \Phi^{-1}(p) e^{-h_p^2/2}}{\sqrt{2\pi}}$ (with $h_p=\Psi^{-1}(p)$), so that for any $p \in (\frac{1}{2}, 1)$, $m'(0)>0$. Hence, it is no longer true that $\gamma_2(C)<p$ for any $C\in \mathcal{C}_2(p)$, and so Proposition \ref{prop:planar} cannot hold for these $p$. 

However, it is possible to show the following weaker statement in this regime:  

\begin{prop}
\label{prop:planarbis}
  Fix $p\in (\frac{1}{2},1)$. Let $W=\{(x,y)\in \R^2: x\leq t_y\}$ for some concave function $(t_y)$. Assume that $||W\cap \Delta_p||_2<2\Psi^{-1}(p)$. Then $\gamma_2(W)< q$, where $q:= \Phi(\sqrt{2} \Phi^{-1}(p)) > p$.  
\end{prop}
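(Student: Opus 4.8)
The plan is to rerun, step for step, the reduction of Proposition~\ref{prop:planar} to Lemma~\ref{lem:red2}, with the target bound $p$ replaced by $q$. Let $W=\{x\le t_y\}$ with $(t_y)$ concave and $\|W\cap\Delta_p\|_2<2\Psi^{-1}(p)$. The degenerate cases --- $W\cap\Delta_p$ empty or a singleton, $W$ inside a horizontal slab, or a tangent line to $W$ at a point of $\Delta_p$ being vertical --- are handled verbatim as in the proof that Lemma~\ref{lem:red1} implies Proposition~\ref{prop:planar}, each giving $\gamma_2(W)<p<q$. Otherwise $W$ is contained in a cone ``infinite on the left''; Steiner-symmetrizing with respect to the $x$-axis (which preserves this property and $\|\,\cdot\,\cap\Delta_p\|_2$ and does not decrease $\gamma_2$) and then widening the opening angle until the chord on $\Delta_p$ has length exactly $2\Psi^{-1}(p)$ (permissible, as that chord was strictly shorter), we land in the family $C_\theta$ ($\theta\in(0,\tfrac\pi2)$) of \S\ref{section:Nazarovandmore} --- now with $p>\tfrac12$, so $h:=\Psi^{-1}(p)>a:=\Phi^{-1}(p)>0$ --- and $\gamma_2(W)<\gamma_2(C_\theta)$. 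Hence it suffices to prove $m(\theta):=\gamma_2(C_\theta)\le q$ for every $\theta\in[0,\tfrac\pi2]$.

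The computation is organized around the closed form
$$m(\theta)=\Phi_2\!\big(h_y,\,h_y;\,2\sin^2\theta-1\big),\qquad h_y:=y_\theta\sin\theta=h\cos\theta+a\sin\theta,$$
where $\Phi_2(\cdot,\cdot\,;\rho)$ is the bivariate standard normal c.d.f.\ of correlation $\rho$. Indeed, $\R^2\setminus C_\theta=U_\theta\cup D_\theta$, the union of the two closed half-planes bounded by the edge-lines of $C_\theta$ and disjoint from $\intr C_\theta$; each of $U_\theta,D_\theta$ is at distance $h_y$ from the origin, so $1-\gamma_2(U_\theta)-\gamma_2(D_\theta)=\Psi(h_y)$, while $U_\theta\cap D_\theta$ is the vertically opposite wedge $R_\theta$, and in the coordinates $\xi:=\sin\theta\,x+\cos\theta\,y$, $\eta:=\sin\theta\,x-\cos\theta\,y$ --- a standard Gaussian pair of correlation $-\cos 2\theta$ --- one has $U_\theta\cap D_\theta=\{\xi\ge h_y,\ \eta\ge h_y\}$; summing via inclusion--exclusion gives $m(\theta)=\Psi(h_y)+\gamma_2(R_\theta)=\Phi_2(h_y,h_y;2\sin^2\theta-1)$. (As it must, $m(0)=\Phi_2(h,h;-1)=\Psi(h)=p$ and $m(\tfrac\pi2)=\Phi_2(a,a;1)=\Phi(a)=p$.)

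It remains to show $\Phi_2(h_y,h_y;2\sin^2\theta-1)\le\Phi(\sqrt2\,a)$ for $\theta\in(0,\tfrac\pi2)$. The crude estimate $m(\theta)\le\Phi(h_y)\le\Phi(\sqrt{h^2+a^2})$ is too weak, since $h>a$ makes $\Phi(\sqrt{h^2+a^2})>q$: one must use the coupling between $h_y$ and the correlation $\rho=2\sin^2\theta-1$ --- near the $\theta$ where $h_y=\sqrt{h^2+a^2}$ the correlation is bounded well below $1$ and $R_\theta$ is a thin far-away wedge of negligible measure, and conversely. I would establish this either by (i) proving a bound of the type $\Phi_2(t,t;\rho)\le\Phi(t)^{2/(1+\rho)}$ (which holds with equality at $\rho=\pm1$ and at $t=0$), reducing the claim to the one-variable inequality $h\cos\theta+a\sin\theta\le\Phi^{-1}\!\big(q^{\,\sin^2\theta}\big)$, checkable by elementary estimates; or (ii) along the lines of \S\ref{section:Nazarovandmore}, using the derivative formula of Lemma~\ref{mprimesign}, which is valid for all $p\in(0,1)$: here $m'(0)>0$, so the interior critical points of $m$ are maxima rather than minima, but it is enough to bound $m$ at each of them, and at a critical point $\theta^\sharp$ the relations $w_{y}=\lambda_{\theta}=\phi(u)/\Phi(u)$ and $h_y^2+w_y^2=h^2+w^2$ pin $h_y$ down as a function of the single variable $u=u_{\theta^\sharp}\in(0,\infty)$, reducing $m(\theta^\sharp)\le q$ to a one-variable inequality of the same flavour as Claims~\ref{claimeasy}--\ref{penible}.

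In either case, since $m(0)=m(\tfrac\pi2)=p<q$, bounding $m$ at the interior critical points yields the claim. The main obstacle is precisely this sharp estimate: numerically $\sup_\theta m(\theta)$ lies just below $q$ (the gap tending to $0$ as $p\to\tfrac12^+$), so no soft argument can see the $h_y$--$\rho$ trade-off, and the particular value $q=\Phi(\sqrt2\,\Phi^{-1}(p))$ must be exploited essentially rather than as slack. A secondary point: for $p$ sufficiently close to $\tfrac12$ one also has $m'(\tfrac\pi2)>0$, so $m$ carries an interior minimum alongside its maximum; one should remark that $m'$ changes sign at most twice (e.g.\ from monotonicity of $u\mapsto\phi(u)/\Phi(u)$ and of $\theta\mapsto w_y$), although for the upper bound it is enough to control $m$ at every zero of $m'$.
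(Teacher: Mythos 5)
Your reduction to the symmetric cones $C_\theta$ goes through for $p>\tfrac12$ essentially as in the proof that Lemma~\ref{lem:red1} implies Proposition~\ref{prop:planar}, and your closed form $m(\theta)=\Psi(h_y)+\gamma_2(R_\theta)=\Phi_2(h_y,h_y;2\sin^2\theta-1)$ with $h_y=h\cos\theta+a\sin\theta$ is correct, with the right boundary values $m(0)=m(\tfrac\pi2)=p$. (The paper states Proposition~\ref{prop:planarbis} without proof, so there is nothing to compare against; your proposal must stand on its own.) The problem is that it stops exactly where the content of the proposition lies: neither route (i) nor route (ii) is carried out, and since, as you note yourself, $\sup_\theta m(\theta)$ is close to $q$, the inequality $m(\theta)\le q$ cannot be waved through as ``checkable by elementary estimates'' or ``of the same flavour as Claims~\ref{claimeasy}--\ref{penible}''; that estimate is the proof.

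Moreover, route (i) rests on a false inequality. For the bivariate c.d.f., $\Phi_2(t,t;\rho)\le\Phi(t)^{2/(1+\rho)}$ fails for negative $\rho$: as $\rho\to-1$ the right-hand side tends to $0$ while the left-hand side tends to $\Psi(t)>0$ (so the claimed equality at $\rho=-1$ is wrong), and more generally for $\rho<0$ and moderately large $t$ one has $\Phi_2(t,t;\rho)\approx 1-2(1-\Phi(t))$ while $\Phi(t)^{2/(1+\rho)}\approx 1-\tfrac{2}{1+\rho}(1-\Phi(t))$, which is smaller. In your application $\rho<0$ exactly when $\theta<\tfrac\pi4$, and numerically the maximizer of $m$ lies in that range (for $p=\Phi(1)$ it is near $\theta\approx 35$ degrees, where $m\approx 0.917$ against $q\approx 0.921$), so this route breaks precisely where the bound is needed. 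Route (ii) is structurally plausible --- Lemma~\ref{mprimesign} does hold for all $p\in(0,1)$ --- but it demands something the paper's $p\le\tfrac12$ argument never needed: a quantitative upper bound on $m$ at an interior critical point (for $p\le\tfrac12$ the interior critical point is a minimum and the maximum sits at the endpoints, so no such evaluation is required). That estimate, which is where the specific constant $\sqrt2$ in $q=\Phi(\sqrt2\,\Phi^{-1}(p))$ must enter, is entirely missing. So: correct set-up and a genuinely useful closed form, but the proof has a real gap at its central step.
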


Using Proposition \ref{prop:planarbis} and following the induction argument developed in \S \ref{section:main}, one may obtain a bound on $\beta(B_2^n, V)$ in terms of $\gamma_n(V)$, but the bound is no longer dimensionless. Instead, one has:
\begin{prop}
\label{prop:largep_bd}
If $V\in \mathcal{K}^n$, $n \ge 2$, has measure $\gamma_n(V)\geq p>\frac{1}{2}$, then $\alpha(B_2^n, V)\leq f_n(p):=(2\Psi^{-1}(p_n))^{-1}$, with $p_n=\Phi(2^{-n/2} \Phi^{-1}(p))> \frac{1}{2}$.
\end{prop}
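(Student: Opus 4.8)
The plan is to rerun the dimension-reduction induction from the proof of Theorem~\ref{thm:main}, feeding it Proposition~\ref{prop:planarbis} in place of Proposition~\ref{prop:planar}, while carefully tracking the loss incurred at each level. Concretely, I would prove by induction on $n\ge 1$ the following contrapositive statement $(\star_n)$: \emph{for every closed convex set $V'\subset\R^n$ and every $r\in(\tfrac12,1)$, if $\alpha(B_2^n,V')>(2\Psi^{-1}(r))^{-1}$ then $\gamma_n(V')<\Phi\!\left(2^{n/2}\Phi^{-1}(r)\right)$.} Granting $(\star_n)$, the Proposition is immediate: apply $(\star_n)$ with $r=p_n:=\Phi(2^{-n/2}\Phi^{-1}(p))$; if $\alpha(B_2^n,V)$ exceeded $(2\Psi^{-1}(p_n))^{-1}$ we would get $\gamma_n(V)<\Phi(2^{n/2}\Phi^{-1}(p_n))=\Phi(\Phi^{-1}(p))=p$, contradicting $\gamma_n(V)\ge p$ (and $p_n>\tfrac12$ because $\Phi^{-1}(p)>0$).

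The base case $(\star_1)$ is the $n=1$ computation already performed in the proof of Theorem~\ref{thm:main}: $\alpha([-1,1],V')$ is $0$ if $V'$ is unbounded and $(2c)^{-1}$ if $V'$ is a closed interval of length $2c$; if $(2c)^{-1}>(2\Psi^{-1}(r))^{-1}$ then $c<\Psi^{-1}(r)$, hence $\gamma_1(V')\le\Psi(c)<r<\Phi(\sqrt2\,\Phi^{-1}(r))$. For the inductive step ($n\ge2$, assuming $(\star_{n-1})$), suppose $\alpha(B_2^n,V')>(2\Psi^{-1}(r))^{-1}$ and set $c=\Psi^{-1}(r)$. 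As in the proof of Theorem~\ref{thm:main}, fix a witnessing $n$-lattice $L$ with $\lambda_n(2c\,B_2^n,L)\le1$ and $\mu(L,V')>1$, i.e. $u_1,\dots,u_n\in 2c\,B_2^n\cap L$ spanning $\R^n$ and $a=(a',a_n)$ with $(a+L)\cap V'=\emptyset$; choose an orthonormal basis with $e_n^\perp=\mathrm{Span}(u_1,\dots,u_{n-1})$, so that $L':=L\cap e_n^\perp$ satisfies $\lambda_{n-1}(2c\,B_2^{n-1},L')\le1$ and $\pi_n(L)\supseteq\lambda\Z e_n$ with $\lambda:=|\langle u_n,e_n\rangle|\in(0,2c]$. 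The slicing argument of that proof gives, for every $z$ in the progression $a_n+\lambda\Z$, that $(a'+L')\cap V'_z=\emptyset$ where $V'_z:=(V'-ze_n)\cap e_n^\perp$; hence $\mu(L',V'_z)>1$ and so $\alpha(B_2^{n-1},V'_z)>(2c)^{-1}=(2\Psi^{-1}(r))^{-1}$. Applying $(\star_{n-1})$ to each $V'_z$ (with the same $r$) yields $\gamma_{n-1}(V'_z)<\Phi(2^{(n-1)/2}\Phi^{-1}(r))$ for all $z\in a_n+\lambda\Z$.

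I would then form the planar Ehrhard symmetrization $W=\{(x,y)\in\R^2:x\le t_y\}$ with $t_y=\Phi^{-1}(\gamma_{n-1}(V'_y))$; by the $(n-1)$-dimensional Ehrhard inequality $y\mapsto t_y$ is concave, $W$ is closed convex, and $\gamma_2(W)=\gamma_n(V')$. Put $p^\ast:=\Phi(2^{(n-1)/2}\Phi^{-1}(r))$, so that $\Delta_{p^\ast}$ is the vertical line $\{x=2^{(n-1)/2}\Phi^{-1}(r)\}$. Since $t_z<2^{(n-1)/2}\Phi^{-1}(r)$ at every progression point, concavity of $t$ forces $\{y:t_y\ge 2^{(n-1)/2}\Phi^{-1}(r)\}$ to be an interval containing no two consecutive progression points, hence of length $<\lambda$; thus $\|W\cap\Delta_{p^\ast}\|_2<\lambda\le 2c=2\Psi^{-1}(r)$. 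Because $n\ge2$ we have $2^{(n-1)/2}\Phi^{-1}(r)\ge\sqrt2\,\Phi^{-1}(r)>\Phi^{-1}(r)>0$, so $p^\ast\in(\tfrac12,1)$ and $\Psi^{-1}(p^\ast)\ge\Psi^{-1}(r)=c$, whence $\|W\cap\Delta_{p^\ast}\|_2<2\Psi^{-1}(p^\ast)$. Proposition~\ref{prop:planarbis} (with parameter $p^\ast$) then gives
\[
\gamma_n(V')=\gamma_2(W)<\Phi\!\left(\sqrt2\,\Phi^{-1}(p^\ast)\right)=\Phi\!\left(\sqrt2\cdot 2^{(n-1)/2}\Phi^{-1}(r)\right)=\Phi\!\left(2^{n/2}\Phi^{-1}(r)\right),
\]
which is exactly $(\star_n)$, completing the induction.

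Given Proposition~\ref{prop:planarbis}, everything here is mechanical; the only point that needs care is the bookkeeping of $p^\ast$ — one must verify that the parameter fed into Proposition~\ref{prop:planarbis} still lies in $(\tfrac12,1)$ and that its slab width $2\Psi^{-1}(p^\ast)$ dominates the (smaller) width $2\Psi^{-1}(r)$ forced by the lattice, and this is precisely where the hypothesis $n\ge2$ (equivalently $2^{(n-1)/2}\ge\sqrt2>1$) is used. The drift of $p_n$ toward $\tfrac12$ as $n\to\infty$, and hence the dimension dependence of $f_n$, simply records the factor $\sqrt2$ lost at each of the $n$ levels.
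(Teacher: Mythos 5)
Your proposal is correct and is exactly the route the paper sketches (``Using Proposition~\ref{prop:planarbis} and following the induction argument developed in \S \ref{section:main}\dots''), with the bookkeeping made explicit through the auxiliary statement $(\star_n)$: the parameter $r$ fed to the slices stays fixed while the conclusion weakens by a factor $\sqrt 2$ inside $\Phi^{-1}$ at each level. One small wording slip: the set $\{y : t_y \ge 2^{(n-1)/2}\Phi^{-1}(r)\}$ contains \emph{no} progression point at all (since $t_z$ is strictly below the threshold at every $z\in a_n+\lambda\Z$), not merely ``no two consecutive''; it is from emptiness of this intersection that a closed interval avoiding $a_n+\lambda\Z$ must have length $<\lambda$, which is what you actually use.
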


However, for any $p>\frac{1}{2}$, $\lim_{n \to \infty} f_n(p)=c=(2\Psi^{-1}(1/2))^{-1}$, so that Proposition \ref{prop:planarbis} does not yield an improvement to Theorem \ref{thm:main}.

Moreover, for $n\geq 5$, a much simpler idea yields a sharper inequality than Proposition \ref{prop:largep_bd}. Indeed, observe that if $V$ is a convex set with $\gamma_n(V)\geq p >\frac{1}{2}$, then $V$ must contain a ball of radius $r_p = \Phi^{-1}(p)$ (since, otherwise $V$ would be contained in a half-space at distance at most $r_p$ from the origin, and so $\gamma_n(V) < p$). In other words, $r_p B_2^n:=\Phi^{-1}(p) B_2^n \subset V$, and therefore $\alpha(B_2^n, V)\leq \alpha(B_2^n, r_p B_2^n) = \frac{\sqrt{n}}{2\Phi^{-1}(p)}$. If $n\geq 5$, then for $p$ close enough to $1$, one may check that $\frac{\sqrt{n}}{\Phi^{-1}(p)}<f_n(p)=\frac{1}{2\Psi^{-1}(p_n)}$. 

\subsection{Implications for \texorpdfstring{$\beta(B_2^n, B_n^\infty)$}{β(B₂, B∞)}} \label{subsec:smallp}

One may wonder if the resolution of Conjecture \ref{BSconj} has any implications concerning $\beta(B_2^n, B_{\infty}^n)$. Unfortunately, at least in the regime $p\in (0,\frac{1}{2}]$, it does not.
To see this, denote
$$g_{\beta}(p):=\limsup_{n\geq 1} \sup \{\beta(B_2^n, V): \gamma_n(V)\geq p, V\in \mathcal{C}_n\},$$
and let $S=\R^{n-1} \times [-\psi^{-1}(p),\psi^{-1}(p)]$ be a slab of gaussian measure $p\leq \frac{1}{2}$. Then $g_{\beta}(p)\geq \beta(B_2^n , S)\geq \alpha(B_2^n, S)=\frac{1}{2\psi^{-1}(p)}$. Hence if one chooses $t=t_{p,n}=\psi^{-1}(p^{1/n})$ so that $\gamma_n(tB_{\infty}^n)=p$, one gets  $\beta(B_2^n, B_{\infty}^n)= t\beta(B_2^n, tB_{\infty}^n)\leq t g_{\beta}(p)$. But, up to a constant, the best upper bound one gets through this estimate (in the range $p \in (0, \frac{1}{2}]$) is obtained at $p = \frac{1}{2}$:
$$\inf_{p \in (0, \frac{1}{2}]} t_{p,n} g_{\beta}(p)= \inf_{p \in (0, \frac{1}{2}]} \psi^{-1}(p^{1/n}) g_{\beta}(p) \geq \inf_{p \in [0, \frac{1}{2})} \frac{\psi^{-1}(p^{1/n})}{2\psi^{-1}(p)}$$
and it turns out that this infimum is $\Omega(\sqrt{\log n})$ (see below).
However, just considering $p = \frac{1}{2}$, one has $g_{\beta}(1/2)t_{1/2, n} \leq g_{\beta}(1/2) \sqrt{2 \log (n)}\leq 5 \sqrt{2\log n}$ using Banaszczyk's inequality (\ref{Banresult}).

\begin{proof}[Proof that $\inf_{0<p<\frac{1}{2}} t_{p,n} g_{\beta}(p)=\Omega(\sqrt{\log n})$] Since $\Psi^{-1}$ is convex on $(0,1)$, and $\Psi^{-1}(0)=0$, one has $\Psi^{-1}(\lambda q) \leq \lambda \Psi^{-1}(q)$, for any $\lambda, q\in (0,1)$. If $p\in \left(0, (\log n)^{-\frac{n}{2(n-1)}}\right)$, then, denoting $\lambda=p^{1-\frac{1}{n}} \in (0,1)$, convexity yields  $\Psi^{-1}(p)=\Psi^{-1}(\lambda p^{1/n}) \leq \lambda \Psi^{-1}(p^{1/n})$, and hence $\frac{\Psi^{-1}(p^{1/n})}{\Psi^{-1}(p)} \geq \lambda^{-1}=p^{-\frac{n-1}{n}} \geq \sqrt{\log n}$.

Denote $\alpha(x)=\frac{e^{-x^2/2}}{x\sqrt{2\pi}}\left(1-\frac{1}{x^2}\right)$. One verifies that $x \mapsto \alpha(x)$ is decreasing for $x\geq 3^{1/4}$. Integration by parts yields the standard estimate (for all $x>0$) for the tail probability of a Gaussian:
$$\alpha(x) = \frac{e^{-x^2/2}}{x\sqrt{2\pi}}\left(1-\frac{1}{x^2}\right) \leq 1 - \Phi(x) \leq \frac{e^{-x^2/2}}{x\sqrt{2\pi}}$$
from which we deduce that $\Psi(x) = 2\Phi(x) - 1 \leq 1-2 \alpha(x)$. Let $\alpha_0 = \alpha(3^{1/4})$ and denote by $x \mapsto x_{\alpha}$ the inverse function of $\alpha$ on $(0,\alpha_0]$. For any $\alpha \leq \alpha_0$, one has $\Psi^{-1}(1-2\alpha) \geq x_{\alpha}$.

Assume $p\in  [(\log n)^{-\frac{n}{2(n-1)}},\frac{1}{2}]$. Then $p^{1/n} \geq (\log n)^{-\frac{1}{2(n-1)}}\geq 1- \frac{\log \log n}{2(n-1)}=: 1-2x$, and hence 
$$\Psi^{-1}(p^{1/n}) \geq \Psi^{-1}(1-2x) \geq \sqrt{\log n}$$
as long as $n$ is large enough so that $\frac{\log \log n}{4(n-1)}\leq \frac{1}{\sqrt{2\pi}\sqrt{n\log n}}\left(1-\frac{1}{\log n}\right)$. Therefore, for large enough $n$, for any $p\in  [(\log n)^{-\frac{n}{2(n-1)}},\frac{1}{2}]$, one gets
$$\frac{\Psi^{-1}(p^{1/n})}{\Psi^{-1}(p)} \geq \frac{\sqrt{\log n}}{\Psi^{-1}(1/2)} > \sqrt{\log n}.$$
\end{proof}

\subsection{The limiting behavior as \texorpdfstring{$p \to 1$}{p → 1}}
\label{subsec:pto1}

Set 
$$g_{\alpha}(n,p)= \sup \{\alpha(B_2^n, V) : V\in \mathcal{C}^n, \gamma_n(V) = p\}$$
and similarly 
$$g_\beta(n,p)=\sup \{\beta(B_2^n, V) : V\in \mathcal{C}^n, \gamma_n(V) = p\}.$$
For $\alpha$, one may also consider the optimal bounding function for non-symmetric $V$:
$$g_{\alpha}^{ns}(n,p)= \sup \{\alpha(B_2^n, V) : V\in \mathcal{K}^n, \gamma_n(V) = p\}$$
We could also define $g_{\beta}^{ns}(n,p)$ in a similar manner, by taking a supremum over the $V\in \mathcal{K}_0^n$ of measure $\gamma_n(V)=p$. However, as we saw in section \ref{section:BSconj}, $\beta(B_2^n, V)$ behaves badly for non-symmetric $V$: we have seen that $g_{\beta}^{ns}(n,p)=+\infty$ for any $p<\frac{1}{2}$. By definition, one has $g_\alpha \le g_\alpha^{ns}$, and $g_\alpha \le g_\beta$ by Fact \ref{alphabetaineq}. Theorem \ref{thm:main} implies that $g_\alpha=g_\alpha^{ns}$ on $(0,\frac{1}{2}]$ (recall that, for $n\geq 2$, $g_{\alpha}(n,p)=\alpha(B_2^n, S_p)=(2\Psi^{-1}(p))^{-1}$ for $0<p \leq \frac{1}{2}$, with $S_p$ the symmetric slab $S_p=\R^{n-1} \times [-\Psi^{-1}(p),\Psi^{-1}(p)]$), while it is unknown whether equality holds in the $p>\frac{1}{2}$ regime.

It is easy to see that, for any fixed $p\in (0,1)$, $g_{\alpha}(n,p)$ is non-decreasing in $n$. Indeed, let $V\subset \R^n$ is an arbitrary closed convex set with $\gamma_n(V)=p$, and consider $V'=V\times \R \subset \R^{n+1}$ (which has measure $\gamma_{n+1}(V')=\gamma_n(V)=p$). Then
\begin{align*}
    g_{\alpha}(n+1,p) \geq \alpha(B_2^{n+1}, V') &=\sup \{\mu(L', V') \big| \text{Span}(L'\cap B_2^{n+1})=\R^{n+1} , L' \text{ $(n+1)$-lattice}\} \\
& \geq  \sup \{\mu(L+\Z e_n, V') :  \text{Span}(L\cap B_2^{n})=\R^{n} , L \text{ $n$-lattice}\} 
= \alpha(B_2^n, V).
\end{align*}
The same tensorization argument shows that $g_\beta(n,p)$ is non-decreasing in $n$, for any $p\in (0,1)$.

Proposition \ref{prop:largep_bd}, or the inclusion $\Phi^{-1}(p) B_2^n \subset V$ given above, shows that $\lim_{p\to 1} g_{\alpha}^{ns}(n,p)=0$, for any fixed $n$; the same argument shows that the same is true for $g_\alpha, g_\beta$.  A natural question is then whether one also has $\lim_{p\to 1} \sup_n f(n,p)=0$ for $f \in \{g_\alpha, g_\beta, g_\alpha^{ns}\}$. As it turns out, this is not the case:
\begin{prop}
\label{limitnonzero} 
$$\lim_{p\to 1} \lim_{n\to +\infty} g_{\beta}(n,p) \geq 1 \hspace{2mm}\text{ and }\lim_{p\to 1} \lim_{n\to +\infty} g_{\alpha}(n,p)=\inf_{p\in (0,1)} \sup_{n\geq 2} g_{\alpha}(n,p) \geq \frac{1}{2}.$$
\end{prop}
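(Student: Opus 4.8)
The plan is to prove both lower bounds by testing $\alpha$ and $\beta$ against dilates of the Euclidean ball, and to establish the claimed identity through a monotonicity argument. For $p\in(0,1)$ and $n\geq 2$, let $r_{n,p}>0$ be the unique radius with $\gamma_n(r_{n,p}B_2^n)=p$ (it exists because $r\mapsto\gamma_n(rB_2^n)$ is a continuous strictly increasing bijection of $(0,\infty)$ onto $(0,1)$). Then $r_{n,p}B_2^n\in\mathcal{C}^n$ has Gaussian measure exactly $p$, so using the homogeneity of $\alpha$ and $\beta$ in their second argument together with the values $\alpha(B_2^n,B_2^n)=\tfrac{\sqrt n}{2}$ and $\beta(B_2^n,B_2^n)=\sqrt n$ recalled in the introduction, one gets
\[
g_{\alpha}(n,p)\ \geq\ \alpha\!\left(B_2^n,\,r_{n,p}B_2^n\right)=\frac{\sqrt n}{2\,r_{n,p}},\qquad
g_{\beta}(n,p)\ \geq\ \beta\!\left(B_2^n,\,r_{n,p}B_2^n\right)=\frac{\sqrt n}{r_{n,p}}.
\]

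Next I would control $r_{n,p}$ asymptotically. Writing $\gamma_n(rB_2^n)=\PP(\chi_n^2\leq r^2)$ and using the central limit theorem (or standard $\chi^2$ concentration), one has $r_{n,p}^2=n+\sqrt{2n}\,\Phi^{-1}(p)+o(\sqrt n)$ as $n\to\infty$, so in particular $\sqrt n/r_{n,p}\to 1$. Since $n\mapsto g_{\alpha}(n,p)$ and $n\mapsto g_{\beta}(n,p)$ are non-decreasing (as observed above in this section), the displayed inequalities give, for \emph{every} $p\in(0,1)$,
\[
\lim_{n\to\infty}g_{\beta}(n,p)=\sup_{n\geq2}g_{\beta}(n,p)\ \geq\ 1,\qquad
\lim_{n\to\infty}g_{\alpha}(n,p)=\sup_{n\geq2}g_{\alpha}(n,p)\ \geq\ \tfrac12 ,
\]
whence $\lim_{p\to1}\lim_{n\to\infty}g_{\beta}(n,p)\geq 1$ and $\lim_{p\to1}\lim_{n\to\infty}g_{\alpha}(n,p)\geq\tfrac12$.

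It remains to identify $\lim_{p\to1}\lim_{n\to\infty}g_{\alpha}(n,p)$ with $\inf_{p\in(0,1)}\sup_{n\geq2}g_{\alpha}(n,p)$. Set $G(p):=\sup_{n\geq2}g_{\alpha}(n,p)=\lim_{n\to\infty}g_{\alpha}(n,p)$; it suffices to show $G$ is non-increasing on $(0,1)$, for then $\inf_{(0,1)}G=\lim_{p\to1^-}G(p)$. Fix $p<p'$ and a symmetric closed convex body $V\subset\R^n$ with $\gamma_n(V)=p'$ (if $\alpha(B_2^n,V)=0$ there is nothing to prove, so assume $\alpha(B_2^n,V)>0$), and pick an $n$-lattice $L$ with $\mathrm{Span}(L\cap B_2^n)=\R^n$ and $\mu(L,V)\geq\alpha(B_2^n,V)-\epsilon>0$. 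For $m\geq1$ set $a_m:=\Psi^{-1}\!\bigl((p/p')^{1/m}\bigr)$ and consider $\widetilde V:=V\times[-a_m,a_m]^m\in\mathcal{C}^{n+m}$ and $\widetilde L:=L\oplus\Z^m$ (the $\Z^m$ on the last $m$ coordinates). Then $\gamma_{n+m}(\widetilde V)=p'(p/p')=p$ and $\mathrm{Span}(\widetilde L\cap B_2^{n+m})=\R^{n+m}$, while from $\widetilde L+t\widetilde V=(L+tV)\times(\Z+[-ta_m,ta_m])^m$ one reads off $\mu(\widetilde L,\widetilde V)=\max\{\mu(L,V),\,(2a_m)^{-1}\}$, which equals $\mu(L,V)$ as soon as $m$ is large enough that $a_m\geq(2\mu(L,V))^{-1}$ (note $a_m\to\infty$ since $(p/p')^{1/m}\to1$). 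Hence $G(p)\geq g_{\alpha}(n+m,p)\geq\mu(\widetilde L,\widetilde V)=\mu(L,V)\geq\alpha(B_2^n,V)-\epsilon$; taking the supremum over all such $V$ and all $n$, and letting $\epsilon\to0$, gives $G(p)\geq G(p')$, as required.

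The argument is almost entirely elementary, and I do not foresee a genuine obstacle; the two points that need a little care are the Gaussian tail estimate $r_{n,p}=\sqrt n\,(1+o(1))$ — immediate from the CLT or from exponential concentration of $\chi_n^2$ — and the bookkeeping in the tensorization (that $\widetilde L\cap B_2^{n+m}$ still spans and that $\mu(\widetilde L,\widetilde V)$ is exactly $\max\{\mu(L,V),(2a_m)^{-1}\}$). The real content is simply the observation that a Euclidean ball of near-full Gaussian measure is, in high dimension, an approximate maximizer for both $\alpha(B_2^n,\cdot)$ and $\beta(B_2^n,\cdot)$, and that taking products with wide symmetric slabs lets one trade dimension for measure without decreasing $\alpha$.
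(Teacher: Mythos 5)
Your proof is correct, and the core step is the same as the paper's: test $\alpha$ and $\beta$ against the centered Euclidean ball $r_{n,p}B_2^n$ of Gaussian measure $p$, use the known values $\alpha(B_2^n,B_2^n)=\tfrac{\sqrt n}{2}$, $\beta(B_2^n,B_2^n)=\sqrt n$, homogeneity, and the concentration fact $r_{n,p}=\sqrt n\,(1+o(1))$; the paper makes the last step quantitative via the $\chi^2$ tail bound $\PP(\|Z\|^2\ge n+2\sqrt{nx}+2x)\le e^{-x}$, which is only a cosmetic difference from your CLT/concentration appeal. Where you go beyond the paper is in actually justifying the identity $\lim_{p\to1}\lim_{n\to\infty}g_\alpha(n,p)=\inf_{p\in(0,1)}\sup_{n\ge2}g_\alpha(n,p)$, which the paper's proof leaves implicit (the paper only establishes the two numerical lower bounds). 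Your tensorization with slabs $[-a_m,a_m]^m$ is correct — the bookkeeping $\mu(\widetilde L,\widetilde V)=\max\{\mu(L,V),(2a_m)^{-1}\}$ and the spanning check are both fine — but it is more machinery than necessary: for fixed $n$, the map $p\mapsto g_\alpha(n,p)$ is already non-increasing by simple dilation (if $\gamma_n(V)=p'>p$, pick $t\in(0,1)$ with $\gamma_n(tV)=p$; then $\alpha(B_2^n,tV)=t^{-1}\alpha(B_2^n,V)\ge\alpha(B_2^n,V)$), so $G(p)=\sup_{n}g_\alpha(n,p)$ is non-increasing directly, and $\lim_{p\to1^-}G(p)=\inf_{p}G(p)$ follows. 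That said, nothing in your argument is wrong, and the extra detail is a genuine improvement over the terse proof in the paper.
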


\begin{proof}
Let $R_p=R_p(n)>0$ be such that $\gamma_n(R_p B_2^n)=p$. In other words, if $Z \sim \mathcal{N}(0,I_n)$ is a standard gaussian vector, then $R_p$ is defined by $\PP(\|Z\| \leq R_p)=p$. By standard concentration results, one has $\mathbb P(\|Z\|^2 \ge n+2\sqrt{nx}+2x) \le e^{-x}$ for any $x>0$, which implies that if $n\geq |\ln(1-p)|$, then $R_p \leq (n+4 (n |\ln(1-p)|)^{1/2} \leq \sqrt{n}+2 \sqrt{|\ln(1-p)|}$. Hence, for such $n$,

$$\beta(B_2^n, R_p B_2^n)=\frac{1}{R_p} \beta(B_2^n, B_2^n)=\frac{\sqrt{n}}{R_p} \geq \frac{\sqrt{n}}{\sqrt{n}+2 \sqrt{|\ln(1-p)|}} \geq 1-\frac{2 \sqrt{|\ln(1-p)|}}{\sqrt{n}},$$
where we have used that $\beta(B_2^n, B_2^n)=\sqrt{n}$. Similarly, using homogeneity of $\alpha$ and the fact that $\alpha(B_2^n, B_2^n)=\frac{\sqrt{n}}{2}$, one gets the lower bound $\frac{1}{2}$ in the limit.
\end{proof}

An interesting consequence of Proposition \ref{limitnonzero} above, and of the fact that $\inf_{0<p<\frac{1}{2}} t_{p,n} g_{\alpha}(p)=\Omega(\sqrt{\log n})$ (as shown in \S \ref{subsec:smallp} above), is that $p=1/2$ is the best tuning within Conjecture \ref{BSconj} for upper bounding $\beta(B_2^n, B_{\infty}^n)$ or $\alpha(B_2^n, B_{\infty}^n)$.

\subsection{Open questions}
\label{section:open}

\begin{enumerate}
    \item Recall from paragraph \ref{subsec:pto1} that for any $n, p$, $g_{\alpha}(n,p) \le g_\alpha^{ns}(n, p)$ (by definition). Theorem \ref{thm:main} tells us that $g_{\alpha}^{ns}(n,p)=g_{\alpha}(n,p)=\frac{1}{2\Psi^{-1}(p)}$ for all $n\geq 1$ and any $p\in (0,\frac{1}{2}]$. Is there equality (in $g_{\alpha}(n,p) \le g_\alpha^{ns}(n, p)$) for $n\geq 2$ and $p\in (\frac{1}{2}, 1)$?
    
    \item  Theorem \ref{thm:main} and translation invariance of the covering radius $\mu(L, \cdot)$ (with respect to any given $n$-lattice $L$) imply that, for any convex body $V$, $\inf_x \alpha(B_2, V+x)\leq \inf_x f(\gamma_n(V+x))$ (where the infimum is over $x\in \R^n$).
with $f$ given by Theorem \ref{thm:main}.

Let $V$ be an arbitrary convex body. Does one have $\inf_x \beta(B_2^n, V+x)\leq \inf_x f_{\beta}(\gamma_n(V+x))$ for some non-increasing $f_{\beta}: (0,1) \to (0,+\infty)$ independent of $n$?
For the counterexample given in \S \ref{subsec:sym_nec}, both sides vanish, so the inequality holds. This also holds for $V=-V$, 
with $f_{\beta}$ as in Theorem \ref{thm:BSconjholds},
since $\max_x \gamma_n(V+x)=\gamma_n(V)$ for origin-symmetric $V$.

\item We would like to find the largest possible subclass $\Pi_n \subset \mathcal{K}_0^n$ such that there exists a non-increasing function $g: (0, 1] \to [0, \infty)$ independent of $n$, such that $\beta(B_2^n, V)\leq g(\gamma_n(V))$ for all $V\in \Pi_n$. By Theorem \ref{thm:BSconjholds}, one can take $\Pi_n=\mathcal{C}_n$. Can one take a larger class of convex bodies than $\mathcal C_n$? For instance, can one take $\Pi_n=\{V\in \mathcal{K}_0^n: \int_{x\in V} x d\gamma_n(x)=0 \}$? In other words, does Conjecture \ref{BSconj} hold for all convex bodies whose gaussian barycenter is at the origin?

\item  For $n\geq 2$, does there exist $c_n>0$ such that $\alpha(B_2^n, V) \geq c_n \beta(B_2^n, V)$, for any $V\in \mathcal{C}_n$? If so, is $\sup_n c_n < \infty$? If this were true, then the Komlós conjecture would be equivalent to the conjecture that $\alpha(B_2^n, B_{\infty}^n) =O_n(1)$.
\end{enumerate}

\section{Appendix}
\label{section:appendice}

The following inequality was mentioned in the introduction (and used throughout the text).

\begin{fact}
\label{alphabetaineq}
Let $U,V \subset \R^n$ be two closed convex sets. Assume $U$ is compact, $0\in U$, and $\text{int}(V)\neq \emptyset$. Then $\alpha(U,V)\leq \beta(U,V)$.
\end{fact}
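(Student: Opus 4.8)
The statement to prove is Fact~\ref{alphabetaineq}: for closed convex $U, V \subset \R^n$ with $U$ compact, $0 \in U$, and $\intr(V) \neq \emptyset$, one has $\alpha(U,V) \leq \beta(U,V)$.

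The plan is to unwind the definition of $\alpha(U,V)$ and show that an arbitrary lattice witnessing a large value of $\mu(L,V)/\lambda_n(L,U)$ forces the existence of vectors in $U$ that cannot be balanced into $\beta(U,V) \cdot V$ (or more precisely into $(\mu(L,V)/\lambda_n(L,U)) \cdot V$ up to an arbitrarily small loss). Concretely, I would fix an $n$-lattice $L$ and, using $1$-homogeneity of both $\mu(\cdot, V)$ and $\lambda_n(\cdot, U)$ in $L$, rescale so that $\lambda_n(L, U) = 1$; by definition of the last successive minimum (and compactness of $U$, so the infimum is attained), there are $u_1, \dots, u_n \in L \cap U$ that are linearly independent. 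Set $L_0 = \Z u_1 + \dots + \Z u_n \subseteq L$. The key observation is that $\mu(L_0, V) \geq \mu(L, V)$ since $L_0 \subseteq L$ (a sublattice covers no better than the full lattice), so it suffices to bound $\mu(L_0, V)$.

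The heart of the argument is then: $\mu(L_0, V) \leq \beta(\{u_1,\dots,u_n\}, V) \leq \beta(U,V)$. For the second inequality, note $u_i \in U$ so balancing the tuple $(u_1,\dots,u_n)$ is one of the instances in the max defining $\beta(U,V)$. For the first inequality, I would take an arbitrary point $x \in \R^n$ and write $x = \sum_{i=1}^n t_i u_i$ with $t_i \in \R$; rounding each $t_i$ to a nearest integer would give distance control in the $\ell_\infty$ sense on coefficients but \emph{not} directly in the $\|\cdot\|_V$ sense, so instead I should use the balancing structure. The right move is the standard ``halving'' trick: write $t_i = m_i + r_i$ with $m_i \in \Z$ and $r_i \in [-\tfrac12, \tfrac12]$; then $x - \sum m_i u_i = \sum r_i u_i$, and I want this remainder to lie in $\mu \cdot V$ for $\mu = \beta(\{u_i\}, V)$. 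Here one invokes that $\beta$ controls signed sums: since $\|{\textstyle\sum} \epsilon_i u_i\|_V \le \beta(\{u_i\},V)$ for a suitable choice of signs, and since $\sum r_i u_i$ with $|r_i| \le \tfrac12$ is a convex combination argument away from signed $\pm\tfrac12$ sums — more carefully, $\{\sum r_i u_i : |r_i| \le \tfrac12\}$ is the convex hull (zonotope) of the $2^n$ points $\{\sum \pm\tfrac12 u_i\}$, each of which lies in $\tfrac12\beta(\{u_i\},V) \cdot V$ by a halved balancing, hence so does the whole zonotope by convexity of $V$. This gives $\mu(L_0, V) \le \tfrac12 \beta(\{u_i\},V)$, which is even stronger than needed; to get exactly $\beta$, note we just need \emph{some} lattice point within $\beta\cdot V$, and the halved bound certainly suffices. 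Taking the supremum over all lattices $L$ yields $\alpha(U,V) \le \beta(U,V)$.

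The main obstacle — really the only subtlety — is making the passage from ``signs $\epsilon_i \in \{\pm 1\}$'' in the definition of $\beta$ to ``coefficients $r_i \in [-\tfrac12,\tfrac12]$'' in the covering estimate fully rigorous: one must verify that the zonotope generated by $\{\tfrac12 u_i\}$ is contained in the convex hull of the signed half-sums (a clean induction on $n$, or a direct vertex-enumeration of the cube $[-\tfrac12,\tfrac12]^n$), and then use convexity of $V$ together with $0 \in V$ (which follows from $\intr(V) \ne \emptyset$ only after a translation — but recall $\mu(L,V)$ is translation-invariant in $V$, so WLOG $0 \in \intr(V)$, legitimizing the gauge $\|\cdot\|_V$). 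Beyond this, compactness of $U$ is used precisely to guarantee $\lambda_n(L,U)$ is attained by genuine lattice vectors \emph{in} $U$ (not merely in its closure), and $0 \in U$ is what the literature's $\beta \le \beta' \le 2\beta$ chain needs, though it is not strictly required for the one-sided bound here. I would also remark that the argument in fact proves the sharper $\alpha(U,V) \le \tfrac12 \beta'(U,V)$-type statement, but since the paper only cites $\alpha \le \beta$, I would state and prove exactly that.
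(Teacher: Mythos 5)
The key step in your argument---showing that the zonotope $Z=\{\sum_i r_i u_i : |r_i|\le\tfrac12\}$ is contained in $\tfrac12\,\beta(\{u_i\},V)\cdot V$---does not follow from the definition of $\beta$, and is in fact false. You observe correctly that $Z$ is the convex hull of the $2^n$ vertices $\sum_i (\pm\tfrac12)u_i$, but then assert that \emph{each} such vertex lies in $\tfrac12\beta\cdot V$. However, $\beta(\{u_i\},V)$ only bounds $\|\sum_i\epsilon_i u_i\|_V$ for \emph{one suitably chosen} sign vector $\epsilon$, since the definition takes a minimum over $\epsilon\in\{\pm1\}^n$; the remaining $2^n-1$ vertices of $Z$ are completely uncontrolled, so the convexity argument collapses. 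Indeed, the intermediate claim you reduce to, $\mu(L_0,V)\le\beta(\{u_1,\dots,u_n\},V)$, is itself false: take $n=2$, $u_1=(1,0)$, $u_2=(1,\varepsilon)$, and $V=B_\infty^2$. Then $\beta(\{u_1,u_2\},V)=\|u_1-u_2\|_\infty=\varepsilon$, yet $\mu(L_0,B_\infty^2)=\tfrac12$ (since every point of $L_0=\Z u_1+\Z u_2$ has integer first coordinate). As $\varepsilon\to 0$ the claimed inequality fails by an arbitrarily large factor.

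What saves Fact \ref{alphabetaineq} is that it bounds $\mu(L,V)$ by $\beta(U,V)$, not by $\beta(\{u_i\},V)$; in the counterexample above, any convex $U$ containing $u_1,u_2$ and $0$ forces $\beta(U,V)$ to be much larger than $\varepsilon$. Exploiting this requires the hypothesis $0\in U$, which you note is ``not strictly required'' --- but it is essential: one must be able to balance arbitrary \emph{sub-tuples} of $(u_1,\dots,u_n)$ with zeros padded, which is exactly what $0\in U$ provides. The paper's proof uses this to show the much weaker local containment $P^1\subset P^0+\tfrac12 V$ (shifting each midpoint $y\in P^1$ by a lattice point of $P^0$ chosen according to the balancing, so that only the balanced combination --- not all sign patterns --- needs to land in $\tfrac12 V$), and then bootstraps this dyadically via Claim \ref{combiclaim} to conclude $P\subset P^0+V$. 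Your ``nearest integer'' rounding tries to accomplish this in one stroke by placing an entire fundamental cell inside a single dilate of $V$; that is precisely what is not true, and the iterated halving with lattice shifts at each stage is the ingredient your proposal is missing.
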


If $V$ is closed convex, but not compact, note that as $U$ is compact, $\beta(U,V) = \inf \{\beta(U,K) : K\subset V \text{ compact}\}$. On the other hand,  $\alpha(U,V)\leq \inf \{\alpha(U,K) : K\subset V \text{ compact}\}$. Therefore one only needs to show that $\alpha(U, V) \le \beta(U, V)$ under the assumption that both $U, V\in \mathcal{K}^n$ are compact.

\begin{proof}
We need to show that for any $n$-lattice $L$, $\mu(L,V)\leq \beta(U,V) \lambda_n(L,U)$. By homogeneity (in $L$) of $\lambda_n$ and $\mu$, we can assume that $\lambda_n(L,U)=1$. By homogeneity (in $V$) of $\mu$ and of $\beta$, we can assume that $\beta(U,V)=1$. Hence we only need to show that if $\text{Span}(L\cap U)=\R^n$ and that $\max_{u_1, \ldots, u_n \in U}  \min_{\epsilon\in \{\pm 1\}^n} || \sum_{i=1}^n \epsilon_i u_i ||_V \leq 1$, then $L+V=\R^n$.

Fix $u_1, \ldots, u_n \in U\cap L$, linearly independent, and let $P = [0, u_1] + \cdots + [0, u_n]$ be the parallelotope they span. It is easy to see that if $P + L = \mathbb R^n$, so that $P \subset L + V$ implies $L + V = \mathbb R^n$. By Claim \ref{combiclaim} below, it suffices to show that $P^1 \subset P^0+\frac{1}{2}V$, where  $P^0=\{\sum_{i=1}^n \delta_i u_i, \delta \in \{0,1\}^n\}$, and $P^1=\{\sum_{i=1}^n \delta_i u_i, \delta \in \{0,\frac{1}{2}, 1\}^n\}$.

Any $y\in P^1$ may be written as $y=y_0 +\frac{1}{2} z_0$, with $y_0=\sum_{i\in J} u_i$, $z_0=\sum_{i\in Z} u_i$ for index sets $Z, J \subset [n]$, $Z\cap J=\emptyset$. Consider the tuple $(v_i)_{i\in [n]}$ with $v_i=u_i$ if $i\in Z$, and $v_i=0$ otherwise. As $\beta(U,V)\leq 1$, there exist $v_\pm = \sum_{z \in Z_{\pm}} z_i$, where $Z_ \cup Z_+$ is a partition of $Z$, such that $v_- - v_+ \in V$, while $v_-+v_+=z_0$. Note that $y_0 + v_+\in P^0$ (since $Z\cap J=\emptyset$). Therefore 
$$y=y_0+\frac{1}{2} (v_-+v_+)=y_0 + v_+ +\frac{1}{2} (v_--v_+) \in P^0 +\frac{1}{2} V.$$
\end{proof}

In the above proof, we used the following standard fact (e.g., \cite[Lemma 4]{Ban93})

\begin{claim}
\label{combiclaim}
Let $E$ be a normed vector space, and let $u_1, \ldots, u_t\in E$. Define $P=\{\sum_{i=1}^t \delta_i u_i, \delta \in [0,1]^t\}$. Define $P^0=\{\sum_{i=1}^t \delta_i u_i, \delta \in \{0,1\}^t\}$, and for $k\in \mathbb{N}$, $P^{k+1}=\frac{P^k+P^k}{2}$. Assume $V\subset E$ is compact and convex and that $P^1 \subset P^0 +\frac{1}{2} V$. Then $P\subset P^0 +V$.
\end{claim}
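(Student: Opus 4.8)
The plan is to describe the sets $P^k$ concretely, run an induction whose error term is \emph{damped} by the fixed finite set $P^0$, and then pass to a limit using compactness of $V$.

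First I would record, by a trivial induction on $k$, that $P^k$ is precisely the set of all $\sum_{i=1}^t \delta_i u_i$ with each $\delta_i$ a dyadic rational in $[0,1]$ of the form $j2^{-k}$, $0\le j\le 2^k$. Writing $D_k=\{j2^{-k}:0\le j\le 2^k\}$, this is immediate once one checks $\tfrac12(D_k+D_k)=D_{k+1}$, the averaging being done coordinatewise and independently. The very same arithmetic gives $\tfrac12(D_0+D_k)=D_{k+1}$ as well, and hence the crucial alternative form of the recursion, $P^{k+1}=\tfrac12(P^0+P^k)$. It also follows that $P^0\subset P^1\subset P^2\subset\cdots$ and that $\bigcup_{k\ge 0}P^k$ is dense in $P=\{\sum_i\delta_i u_i:\delta\in[0,1]^t\}$, since the dyadic rationals are dense in $[0,1]$ and $\delta\mapsto\sum_i\delta_i u_i$ is Lipschitz continuous.

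Next I would prove by induction on $k\ge 1$ that $P^k\subset P^0+(1-2^{-k})V$. The case $k=1$ is exactly the hypothesis. For the step, take $z\in P^{k+1}=\tfrac12(P^0+P^k)$ and write $z=\tfrac12(p+q)$ with $p\in P^0$, $q\in P^k$; by the inductive hypothesis $q=p'+(1-2^{-k})v$ with $p'\in P^0$, $v\in V$, and by the hypothesis $\tfrac12(p+p')\in\tfrac12(P^0+P^0)=P^1\subset P^0+\tfrac12 V$, say $\tfrac12(p+p')=p''+\tfrac12 v'$ with $p''\in P^0$, $v'\in V$. Then $z=p''+\tfrac12 v'+(\tfrac12-2^{-k-1})v$, and since both coefficients $\tfrac12$ and $\tfrac12-2^{-k-1}$ are nonnegative and sum to $1-2^{-k-1}$, convexity of $V$ gives $\tfrac12 v'+(\tfrac12-2^{-k-1})v\in(1-2^{-k-1})V$, so $z\in P^0+(1-2^{-k-1})V$. (Equivalently, the constants $c_k$ with $P^k\subset P^0+c_kV$ satisfy $c_1=\tfrac12$ and $c_{k+1}=\tfrac{1+c_k}{2}$, so $c_k=1-2^{-k}<1$; it is precisely the appearance of the fixed set $P^0$ in the recursion that keeps these bounded, whereas the literal recursion $P^{k+1}=\tfrac12(P^k+P^k)$ would only give $c_{k+1}=c_k+\tfrac12$.)

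Finally, to conclude $P\subset P^0+V$, fix $y\in P$; by density of $\bigcup_k P^k$ and nestedness of the $P^k$, choose $y_j\to y$ with $y_j\in P^{k_j}$ and $k_j\to\infty$, and write $y_j=p_j+(1-2^{-k_j})v_j$ with $p_j\in P^0$, $v_j\in V$. Since $P^0$ is finite and $V$ is compact, after passing to a subsequence we may assume $p_j\equiv p\in P^0$ and $v_j\to v\in V$; as the $v_j$ are bounded and $2^{-k_j}\to 0$, this forces $(1-2^{-k_j})v_j\to v$, hence $y=\lim_j y_j=p+v\in P^0+V$. The one genuine idea is the reformulation $P^{k+1}=\tfrac12(P^0+P^k)$ of the recursion; everything else is bookkeeping, and note that the limiting argument in this last step makes it unnecessary to assume $0\in V$.
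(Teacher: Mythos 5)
Your proof is correct and follows essentially the same route as the paper: rewrite the recursion as $P^{k+1}=\tfrac12(P^0+P^k)$, prove $P^k\subset P^0+(1-2^{-k})V$ by induction using convexity of $V$, then pass to the limit via finiteness of $P^0$ and compactness of $V$. The extra remarks (density of the dyadic points, the explicit $c_{k+1}=\tfrac{1+c_k}{2}$ recursion, and the observation that $0\in V$ is not needed) are useful glosses but do not change the argument.
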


\begin{proof}
First it is immediate that $P^k=\{\sum_{i=1}^t \delta_i u_i, \delta \in [0,1]^t\cap (2^{-k} \mathbb{Z})^t \}$, and hence that $P^{k+1}=\frac{P^k+P^0}{2}$, for all $k\geq 0$. By induction on $k\geq 0$, one sees that $P^{k+1}\subset P^0+ (1-2^{-(k+1)}) V$. Indeed, pick $y\in P^{k+1}$ and rewrite $y=\frac{1}{2}(y_0+y_k)$ for some $y_0\in P^0$, $y_k \in P^k$. Then by induction, we can write $y_k=z_0+ (1-2^{-k}) v$ for some $v\in V$. Thus $y=\frac{1}{2}(y_0+z_0)+ \frac{1}{2} (1-2^{-k}) v$. By assumption, $y_1:=\frac{1}{2}(y_0+z_0) \in P^1$ can be written $y_1=y'_0+\frac{1}{2} v'$ for some $y'_0\in P^0$ and some $v'\in V$. Hence, by convexity of $V$,
$$y=\frac{1}{2}(y_0+y_k)=y'_0+\frac{1}{2} v' + \frac{1}{2} (1-2^{-k}) v \in P^0 + (1-2^{-k-1}) V.$$
Now any $u\in P$ can be written $u=\lim u_k$ with $u_k\in P^k$, and we may write $u_k=z_k+v_k$ for some $z_k\in P^0$ and some $v_k\in (1-2^{-k}) V$. By finiteness of $P^0$, and compactness of $V$, we may assume (up to passing to a subsequence) that $z_k=z_0$ is constant and that $v_k \to v\in V$. Hence $u=z_0+v \in P^0 +V$.
\end{proof}

\printbibliography

\end{document}